\theoremstyle{plain}
\newtheorem{theorem}{Theorem}[section]
\newtheorem{lemma}[theorem]{Lemma}
\theoremstyle{definition}
\newtheorem{thma}{Theorem}
\newtheorem{thmb}{Theorem}
\newtheorem{thmc}{Theorem}
\newtheorem{thmd}{Theorem}
\numberwithin{equation}{section}
\newcommand{\R}{\mathbb{R}}
\newcommand{\C}{\mathbb{C}}
\newcommand{\N}{\mathbb{N}}
\renewcommand{\Im}{\mathrm{Im}}
\renewcommand{\Re}{\mathrm{Re}}
\title[On the dynamics of Translated Cone Exchange Transformations]{On the dynamics of Translated Cone Exchange Transformations}
\author{Pedro Peres and Ana Rodrigues}
\address{Department of Mathematics\\University of Exeter\\Exeter EX4 4QF, UK}
\begin{document}

\begin{abstract}
 In this paper we investigate {\it translated cone exchange transformations}, a new family of piecewise isometries and renormalize its first return map to a subset of its partition. As a consequence we show that the existence of an embedding of an interval exchange transformation into a map of this family implies the existence of infinitely many bounded invariant sets. We also prove the existence of infinitely many periodic islands, accumulating on the real line, as well as non-ergodicity of our family of maps close to the origin.
\end{abstract}
\maketitle
 
 \section{Introduction}\label{Introduction}

One of the central problems in dynamical systems is to investigate renormalization of certain classes of maps. We say a map $f:X \rightarrow X$  is renormalizable if there is a subset $Y \subseteq X$ such that the first return map  $f_Y: Y \rightarrow Y$ is conjugated to a map in the same family. 

Although renormalization of Interval Exchange Transformations (IETs) has been well studied over the past years, renormalization of Piecewise Isometries (PWIs) is still far from understood. 
 
In \cite{AKT} Adler, Kitchens and Tresser find renormalization operators for three rational rotation parameters for a non ergodic piecewise affine map of the Torus.  Lowenstein and Vivaldi  \cite{LV14} gave a computer assisted proof of the renormalization of a family of piecewise isometries of a rhombus with one translation parameter and a fixed rational rotation parameter. These results however rely on fixing the rotation component and heavily restricting other parameters in order to perform computer assisted calculations on cyclotomic fields. Recently, Hooper  \cite{Ho13} investigated a two dimensional parameter space of polygon exchange maps, a family of PWIs with no rotation, invariant under a renormalization operation, related to corner percolation and Truchet tillings, where each map admits a return map affinely conjugate to a map in the same family. In \cite{AKY} the authors show how to construct minimal rectangle exchange maps, associated to Pisot numbers, using a cut-and-project method and prove that these maps are renormalizable. The maps described in these papers are PWIs with no rotational component, exhibiting  very particular behaviour among typical PWIs, making it difficult to generalize their techniques.




In this paper we renormalize a particular family of PWIs. Before introducing our family of maps let us  define an \emph{interval exchange transformation} as in \cite{AF} (see also \cite{CFS} and \cite{Ke}).
Let $d \geq 2$ be a natural number and let $\pi$ be an irreducible permutation of $\{1,...,d\}$, that is, such that $\pi(\{1,...,k\})\neq \{1,...,k\}$ for $1\leq k < d$. Let $a \in \mathbb{R}_+^d$.
Consider the points
\begin{equation*}\label{eq0}
x_0=0, \quad x_j=\sum_{k=1}^{j}a_k, \quad 1\leq j \leq d,
\end{equation*}
and the interval $I=\left[x_0, x_d \right)$ partitioned  into subintervals
$I_j=[x_{j-1},x_j),$ for $1\leq j \leq d.$

The interval exchange transformation $f_{a,\pi} : I \rightarrow I$ rearranges $I_j$ according to $\pi$, that is
$f_{a,\pi}(x)=x+ w_j(a,\pi),$ for $x \in I_j,$
where $(w_j(a,\pi))_{j=1,...,d}$ is the \textit{translation vector} associated to $f_{a,\pi}$ and is given by
\begin{equation*}\label{eq4}
w_j(a,\pi)=\sum_{\pi(k)<\pi(j)}a_k - \sum_{k<j}a_k.
\end{equation*}
We also call $f_{a,\pi}$ a  $d$-IET as it is an interval exchange transformation of $d$ subintervals.



We now introduce our family of translated cone exchange transformations (TCEs). Consider a  partition of the upper half plane $\mathbb{H}$ into $d+2$ cones $\mathcal{P}= \{P_0,P_1,\dots,P_d, P_{d+1}\}$, where $P_j=\{z \in \mathbb{C}: \ \arg(z) \in W_j \},$ and $W_j$ for $j=0,\ldots,d+1$ are defined as
\begin{equation*}
  W_j=\left\{
  \begin{array}{ll}
   [0, \beta), & {\rm for} \ j=0,\\
   \left[\beta, \beta + \alpha_1 \right] , & {\rm for} \ j=1,\\
   (\beta+\sum_{k=1}^{j-1}\alpha_k, \beta+\sum_{k=1}^{j} \alpha_k], &  {\rm for} \ j \in \{2,...,d\},\\
   (\pi-\beta, \pi],  & {\rm for} \ j=d+1.
\end{array}\right.
\end{equation*}
We denote by $\partial \mathcal{P}$ the union of the boundaries of the elements of the partition $\mathcal{P}$ and by $L'_1$ and $L'_d$, respectively, the lines $\overline{P_0}\cap \overline{P_1}$ and $\overline{P_d}\cap \overline{P_{d+1}}$.

Set $\alpha=(\alpha_{1},...,\alpha_{d}) \in \mathbb{A}$, where
\begin{equation*}\label{regalpha}
\mathbb{A}=\left\{ \alpha\in \R_+^d: 0< \sum_{j=1}^{d}\alpha_j <\pi \right \}.
\end{equation*} 
 
 Note that we have
 \begin{equation}\label{beta}
 \beta=\dfrac{\pi}{2}-\dfrac{|\alpha|}{2},
 \end{equation} 
where $|\alpha|$ is the $\ell_1$ norm of $\alpha$.

Let $G:\mathbb{H} \rightarrow \mathbb{H}$ be the following family of translation maps
 \begin{equation*}\label{ren01}
  G(z)=\left\{\begin{array}{ll}
   z-1, & z \in P_0, \\
   z -\eta, & z \in P_j , \ j \in \{1,...,d\},\\
   z+ \lambda, & z \in P_{d+1},
  \end{array}\right.
 \end{equation*}
 depending on the parameters $\alpha, \beta, \lambda$ and $\eta$ with
  $\beta>0$, $\lambda \in  \mathbb{R}^+\backslash \mathbb{Q}$ and $0<\eta<\lambda$.

 
 Consider a permutation $\tau \in S(d)$ and let $\theta_j(\alpha,\tau)$ be the angle associated to the permutation $\tau$ for the cone $P_j$ for $j=1,\ldots,d$. We have
 \begin{equation}\label{ren01a}
  \theta_{j}(\alpha,\tau)=\sum_{\tau(k)<\tau(j)}\alpha_k - \sum_{k<j}\alpha_k.
 \end{equation}
 
 Let $E:\mathbb{H} \rightarrow \mathbb{H}$ be the following family of exchange maps
  \begin{equation*}\label{ren02}
  E(z)=\left\{ \begin{array}{ll}
   z,& z \in P_0 \cup P_{d+1}, \\
   ze^{i \theta_j(\alpha,\tau)},& z \in P_j , \ j \in \{1,...,d\},\\
  \end{array}\right.
 \end{equation*}
  depending on $\theta_{j}(\alpha,\tau)$. This map also depends on $\alpha$ and $\beta$ as the partition elements $P_j$ depend on these parameters. Note that we have
  $$\beta +\arg\left( E(z)/|z|  \right) = f_{\alpha,\tau}(\arg(z)-\beta),$$
  for $z \in P_j$, $j = 1,...,d$, where $\arg : \C \rightarrow [0,2\pi)$ is the argument function. Hence $E$ exchanges these cones according to the permutation $\tau$.

 From the translation and exchange families of maps we get our family of TCEs,
  $F:\mathbb{H} \rightarrow \mathbb{H}$,  given by
 \begin{equation*}\label{ren03}
  F(z)=G \circ E (z).
 \end{equation*}
  The dynamics of $F$ restricted to $P_0$ is a translation to the left by $1$ while the dynamics of $F$ restricted to $P_{d+1}$ is a translation to the right by $\lambda$, via the action of the translation map $G$. The rest of the cones are all permuted, according to a permutation $\tau$, by the exchange map $E$ and horizontally translated by $\eta$ by the translation map $G$.
 
 
Note that TCEs are cone isometry transformations for which the map induced by projection onto the circle at infinity $\hat{F}$ (see \cite{AG10}) is invertible. $F$ is defined on $\mathbb{H}\subseteq \mathbb{C}$, partitioned into $d+2$ cones by $\mathcal{P}$, hence it is a cone exchange transformation. $\hat{F}$ is an interval exchange transformation with interval partition given by $\{W_0,...,W_{d+1}\}$ and combinatorial data given by the permutation $\hat{\tau}$, where $\hat{\tau}(0)=0$, $\hat{\tau}(d+1)=d+1$, and $\hat{\tau}(j)=\tau(j)$, for $j=1,...,d$.

Let us introduce some notation. We define the {\it middle cone} $P_c$ of $F$ as
\begin{equation*}\label{ren04}
P_c=P_1\cup...\cup P_d,
\end{equation*}
the {\it first hitting time of $z \in \mathbb{H}$ to $P_c$}, as the map $k:\mathbb{H}\rightarrow \N$ given by
\begin{equation*}\label{eqk}
k(z)=\inf\{n \geq 1: F^n(z)\in P_c\},
\end{equation*}
and the {\it first return map of $z \in P_c$ to $P_c$}, as the map $R:P_c\rightarrow P_c$ such that
\begin{equation*}\label{ren05}
R(z)=F^{k(z)}(z).
\end{equation*}


The typical notion of renormalization may not capture all possible self similar behaviour in PWIs. TCEs apparently exhibit invariant regions on which the dynamics is self similar after rescaling. Thus, we say a TCE is renormalizable if $R$, the first return map to $P_c$ described above, is conjugated to itself by a scaling map. In Theorem \ref{renormtheorem} we renormalize, in this sense, TCEs for all rotation parameters and for infinitely many translational parameters. We show that for a set of parameters, the first return map under a TCE to $P_c$, is self-similar by a scaling factor $\Phi^2$ where $$\Phi=(\sqrt{5}-1)/2.$$

\begin{thma}\label{renormtheorem}
 For all $\alpha \in \mathbb{A}$, $\lambda= 1/(k+\Phi)$ and $\eta=1-k\lambda$ with $k \in \N$, there is an open set $U$ containing the origin such that $F$ is renormalizable for all $z \in U$, that is
 \begin{equation}\label{ren50}
 R(\Phi^2 z)=\Phi^2 R(z).
 \end{equation}
\end{thma}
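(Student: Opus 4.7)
The plan is to reduce the analysis of $R$ near the origin to a one-dimensional translation recurrence on the real line. First, I would observe that for $z \in P_c$ the point $E(z)$ is again in $P_c$, while $E$ acts as the identity on $P_0 \cup P_{d+1}$. Hence, as long as the iterates $F^j(z)$ for $1\leq j<n$ all lie in $P_0\cup P_{d+1}$, one has
\[
F^n(z) = E(z) + c_n, \qquad c_n = -\eta + \sum_{j=2}^{n} s_j,
\]
with $s_j = \lambda$ if $F^{j-1}(z) \in P_{d+1}$ and $s_j = -1$ if $F^{j-1}(z) \in P_0$. Since horizontal translations preserve imaginary parts, $\Im F^{j-1}(z) = \Im E(z) =: v$ throughout, and the cone of $F^{j-1}(z)$ is decided by the position of $\Re E(z)+c_{j-1}$ relative to $\pm v\cot\beta$. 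For $z$ close enough to the origin, the sequence $\{s_j\}$ reduces to the deterministic one-dimensional rule: add $\lambda$ when $c_{j-1}<0$, subtract $1$ when $c_{j-1}>0$.

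The second step is an explicit study of $\{c_n\}$ using the identity $\eta = \Phi\lambda$ (which follows from $\lambda=1/(k+\Phi)$ and $\eta=1-k\lambda$) together with the golden-ratio relations $\Phi^2 = 1-\Phi$ and $\Phi^3 = 2\Phi-1$. I would isolate the subsequence $\{c_{n_i}\}_{i\geq 0}$ of those terms strictly smaller in modulus than all earlier ones (the candidates for a first return to $P_c$), and prove by induction that
\[
c_{n_i} = (-1)^{i+1}\,\Phi^{i+1}\,\lambda,
\]
so the magnitudes decrease geometrically by $\Phi$ and the signs alternate. Equivalently, the 1D map $T(x)=x+\lambda$ for $x<0$, $T(x)=x-1$ for $x>0$, is a two-interval exchange on $[-1,\lambda)$ conjugate to rotation by $\lambda/(1+\lambda)=1/(k+1+\Phi)$, which Gauss-renormalizes to the golden rotation $\Phi$, whose closest returns to $0$ are self-similar of ratio $\Phi$ (and ratio $\Phi^2$ taken every second small return). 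The upshot is the key scaling identity
\[
c_{n_{i+2}} = \Phi^2\, c_{n_i}.
\]

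Finally, because $E$ is piecewise rotation it commutes with positive real scalings, so $E(\Phi^2 z) = \Phi^2 E(z)$, and the first-return window
\[
\bigl[-\Re E(z) - v\cot\beta,\; -\Re E(z) + v\cot\beta\bigr]
\]
transforms under $z\mapsto \Phi^2 z$ to its exact $\Phi^2$-scaling. Combined with the identity from the previous step, if $c_{n_i}$ is the first term of the sequence to fall in the window for $z$, then $c_{n_{i+2}}=\Phi^2 c_{n_i}$ is the first to fall in the window for $\Phi^2 z$, whence
\[
R(\Phi^2 z) = E(\Phi^2 z) + c_{n_{i+2}} = \Phi^2 E(z) + \Phi^2 c_{n_i} = \Phi^2 R(z).
\]
The set $U$ is then chosen as the open neighbourhood of the origin on which the one-dimensional selection rule is never ambiguous along the orbit and which is invariant under $z\mapsto \Phi^2 z$. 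I anticipate the main obstacle to be precisely this cone-boundary bookkeeping: the corrections coming from $\Re E(z) \neq 0$ and the finite thresholds $\pm v\cot\beta$ could in principle push some $F^{j-1}(z)$ into a wrong cone near a sign change of $c_{j-1}$, so one must show that the $|z|$-radii at which the first-return index jumps between consecutive $n_i$ are themselves spaced by an exact factor of $\Phi^2$, and to identify $U$ as the union of the annular regimes on which this matching is precise.
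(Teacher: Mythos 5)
Your overall mechanism is the same one the paper uses: reduce the return to $P_c$ to a one-dimensional translation system on a horizontal cross-section, exploit $\eta=1-k\lambda=\lambda\Phi$ and the golden continued fraction of $\lambda=1/(k+\Phi)$, and conclude self-similarity with ratio $\Phi^2$. The gap is in the step you state as "if $c_{n_i}$ is the first term of the sequence to fall in the window for $z$, then $c_{n_{i+2}}=\Phi^2 c_{n_i}$ is the first to fall in the window for $\Phi^2 z$". This is asserted, not proved, and as stated it is not true in general: the first-return window is an interval of half-width $\Im E(z)\cot\beta$ centred at $-\Re E(z)$, not at $0$, so the first entry of the translation orbit into it need not occur along the closest-return subsequence at all, and both the first-entry index and the landing position depend on the offset $\Re E(z)$, i.e.\ on the direction of $z$. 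Controlling exactly this dependence is the bulk of the paper: the bifurcation analysis of the three-interval family $g_\ell$ (Theorems \ref{prop:1}, \ref{n7}, \ref{n9}), the comparison lemmas \ref{dlemma}, \ref{l2}, \ref{l1} (which also handle your deferred "bookkeeping": the translation sequences for $z$ and $\Phi^2 z$ can genuinely diverge when some $|c_{j-1}|$ is comparable to $|\Re E(z)|+\Im E(z)\cot\beta$, since the sign rule can fail just outside the window), and the dynamical sequences $y_n(\mu)$, $p_n(\mu)$, where $p_n$ records the landing position relative to the cone width. Theorem \ref{rencor} shows $p_n$ is eventually periodic with period $1$ or $2$ depending on whether $|\mu|$ exceeds $\bar\mu=\nu/\Phi^3$, with $\ell(y_n)$ scaling by exactly $\Phi^2$, and Theorem \ref{t5.2} converts this into piecewise affinity of the return map $\rho$ with breakpoints $y_n$; only then does \eqref{ren50} follow. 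Your uniform "shift by two closest returns" rule roughly captures one of the two slope regimes but not the other, and it silently assumes the window is centred at the origin.

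A second, independent gap concerns the open set $U$. The theorem requires a single open neighbourhood of the origin working for every direction, whereas your $U$ is described as a union of "annular regimes on which the matching is precise", which could a priori degenerate as the slope $\mu'$ approaches a cone boundary or a direction with $\theta_j=\pi/2$. The paper handles this by proving a uniform positive lower bound $\bar{y_1}>0$ for the first breakpoint $y_1(\mu)$ over all admissible slopes (the estimates around \eqref{ren84}--\eqref{ren117}) and then setting $U=\{z\in P_c:\Im(z)<\bar{y_1}\}$. Without such a uniform bound, the existence of an open $U$ containing the origin is not established. So the proposal has the right skeleton, but the two places you flag as "bookkeeping" and "choice of $U$" are precisely where the proof's real work lies, and they are not routine.
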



As a consequence of this we show that for these parameters $R$ is a PWI with respect to a partition $\mathcal{P}_R$ of countably many atoms.




We say, as in \cite{AGPR}, that $h:I\rightarrow X$ is a {\em continuous embedding} of an IET $f:I\rightarrow I$ into a PWI $T:X \rightarrow X$ if $h$ is a homeomorphism onto its image and
\begin{equation}\label{eq0s2}
h\circ f(x) = T\circ h(x)~~\mbox{ for all }x\in I.
\end{equation}
It was proved in \cite{AGPR} that \textit{non trivial} continuous embeddings of minimal IETs into PWIs, this is, continuous embeddings which are not unions of circles or lines, cannot exist for 2-PWIs and can be at most 3 for any given 3-PWI. In the same paper it is provided numerical evidence for the existence of non trivial embeddings into a 4-PWI belonging to the family of TCEs.

 We say a collection of atoms $\mathcal{B}\subseteq  \mathcal{P}  $ is a \emph{barrier} for a PWI $(T,\mathcal{P})$ if $X \backslash \mathcal{B}$ is the union of two disjoint connected components $B_1$, $B_2$ such that
$$B_1 \cap T(B_2)= T(B_1) \cap B_2 = \emptyset,$$
and for any $P \in \mathcal{P}$ such that $P \subseteq B_j$ and $T(P)\cap \overline{\mathcal{B}}\cap \overline{B_j}= \emptyset$ then $T(P)\cap \mathcal{B} = \emptyset$, for $j=1,2$.

For $\alpha \in \mathbb{A}$, $\lambda= 1/(k+\Phi)$ and $\eta=1-k\lambda$, $k \in \N$ let $U$ be the open set in Theorem \ref{renormtheorem}. We denote by $\mathfrak{A}(\lambda,\eta)$ the subset of $\mathbb{A}$ such that for all $\alpha \in \mathfrak{A}(\lambda,\eta)$ there exist $d'\geq2$, $a \in \R_+^{d'}$, $\pi \in S(d')$ and  a continuous embedding $h$ of $f_{a,\pi}:I\rightarrow I$ into $R:P_c\rightarrow P_c$ such that $h(I)\subset \Phi^2 U$, $h(0)\in L_d'$, $h(|a|)\in L_1'$ and the collection
\begin{equation*}
\mathcal{B}=\{ P\in \mathcal{P}_R :P \cap h(I) \neq \emptyset   \},
\end{equation*}
is a barrier for $R$.

In the next theorem we show, as a consequence of renormalization of TCEs, that the existence of one continuous embedding of an IET into a first return map $R$  of a TCE, satisfying the property that the image of the embedding is contained in a barrier, implies the existence of infinitely many embeddings of the same IET into $R$,  as well as infinitely many bounded and forward invariant regions. This shows in particular that if one non trivial embedding exists then the  results from \cite{AGPR} for 2,3-PWIs do not generalize for PWIs with partitions with a higher number of atoms. We prove that for $\alpha \in \mathfrak{A}(\lambda,\eta)$ there are infinitely many sets, bounded away from $0$ and infinity, which are forward invariant by $R$ and that there exist infinitely many continuous embeddings of IETs into $R$.

\begin{thmb}\label{infembed}
	Let $\lambda= 1/(k+\Phi)$ and $\eta=1-k\lambda$ with $k \in \N$. For all $\alpha \in \mathfrak{A}(\lambda,\eta)$,
	
	i) There exist sets $V_1,V_2,...$, which are forward invariant for $R$ and $y^*>0$ such that for all $z \in P_c$, satisfying  $0<\Im(z)<y^*$, there is an $n \in \N$ for which $z \in V_n$.
	
	ii) For all $n \in \N$ there exist constants $0<b_n<B_n$ such that for all $z \in V_n$ and $k \in \N$,
	\begin{equation}\label{bbounds}
	b_n< |F^k(z)|<B_n.
	\end{equation}

	iii) There exist infinitely many continuous embeddings of IETs into $R$.
	
\end{thmb}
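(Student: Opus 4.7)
The strategy is to combine Theorem A with the barrier hypothesis to produce a nested sequence of scaled barriers, whose gaps form forward-invariant annuli from which all three statements follow.

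For $n \geq 0$, define $h_n(x) := \Phi^{2n} h(x)$. Since $h(I) \subset \Phi^2 U$, iterating (\ref{ren50}) yields
\begin{equation*}
R(h_n(x)) = R\bigl(\Phi^{2n} h(x)\bigr) = \Phi^{2n} R\bigl(h(x)\bigr) = \Phi^{2n} h(f_{a,\pi}(x)) = h_n(f_{a,\pi}(x)),
\end{equation*}
so each $h_n$ is a continuous embedding of the same IET $f_{a,\pi}$ into $R$; the images $h_n(I) = \Phi^{2n} h(I)$ are mutually distinct shrinking copies, which gives (iii). Set $\mathcal{B}_n := \{P \in \mathcal{P}_R : P \cap h_n(I) \neq \emptyset\}$. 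Because (\ref{ren50}) conjugates $R$ with itself on $U$ via $z \mapsto \Phi^2 z$, the partition $\mathcal{P}_R$ is self-similar at small scales: $\mathcal{P}_R \cap \Phi^{2n} U = \Phi^{2n}(\mathcal{P}_R \cap U)$. Consequently $\mathcal{B}_n = \Phi^{2n}\mathcal{B}$, and the non-crossing condition for $\mathcal{B}_n$ is just the rescaled non-crossing condition for $\mathcal{B}$, so each $\mathcal{B}_n$ is a barrier for $R$.

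Since $h_n(0) \in L_d'$ and $h_n(|a|) \in L_1'$ (both half-lines pass through the origin and are invariant under positive scaling), $h_n(I)$ traverses $P_c$ from one lateral boundary to the other. Let $B_1^{(n)}$ denote the component of $P_c \setminus \mathcal{B}_n$ whose closure contains the origin (with $B_1^{(0)} := B_1$); the sets $B_1^{(n)}$ have diameter $O(\Phi^{2n})$ and strictly decrease to $\{0\}$. Define
\begin{equation*}
V_n := \bigl(B_1^{(n-1)} \setminus B_1^{(n)}\bigr) \cup \mathcal{B}_n, \qquad n \geq 1.
\end{equation*}
Applying the barrier property at scales $n-1$ and $n$ forbids any $R$-orbit in $V_n$ from crossing $\mathcal{B}_{n-1}$ outward or $\mathcal{B}_n$ inward, so $V_n$ is forward $R$-invariant. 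Choosing $y^* > 0$ small enough that $\{z \in P_c : 0 < \Im z < y^*\} \subseteq B_1$ ensures every such $z$ lies in a unique $V_n$, proving (i).

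Each $V_n$ is sandwiched between the two bounded curves $h_{n-1}(I)$ and $h_n(I)$, so there exist $0 < b_n' < B_n'$ bounding $|w|$ for $w \in V_n$. To upgrade $R$-boundedness to $F$-boundedness in (ii), recall that $F$ acts on $P_0$ and $P_{d+1}$ by the horizontal translations $z \mapsto z-1$ and $z \mapsto z+\lambda$; excursions outside $P_c$ therefore preserve $\Im z$ and contribute only a bounded horizontal displacement, because the first hitting time $k(w)$ to $P_c$ from an entry point with bounded height and real part is uniformly bounded. Absorbing this displacement into slightly larger constants $b_n < B_n$ yields (\ref{bbounds}). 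The main obstacle is justifying that the renormalization genuinely transports the barrier property across scales: since $\mathcal{P}_R$ is a countable partition whose atoms are not a priori manifestly self-similar, this step relies essentially on Theorem A producing a bona fide conjugacy of $R$ with itself under $z \mapsto \Phi^2 z$, from which the identity $\mathcal{P}_R \cap \Phi^{2n} U = \Phi^{2n}(\mathcal{P}_R \cap U)$ (and hence the non-crossing property of each $\mathcal{B}_n$) must be carefully extracted.
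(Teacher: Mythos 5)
Your part (iii) is exactly the paper's argument ($h_n=\Phi^{2n}h$ is an embedding by Theorem \ref{renormtheorem}), but parts (i) and (ii) rest on a claim you have not established and which does not follow from the hypotheses: that each rescaled collection $\mathcal{B}_n=\{P\in\mathcal{P}_R: P\cap h_n(I)\neq\emptyset\}$ is again a barrier. The barrier conditions are global: they quantify over \emph{all} atoms of $\mathcal{P}_R$ and their full images under $R$, including atoms lying far from the origin, outside $U$ (or straddling $\partial(\Phi^{2n}U)$), where the identity $R(\Phi^2 z)=\Phi^2R(z)$ gives no information whatsoever about $R$. The original barrier hypothesis only forbids crossings of $\mathcal{B}$; it says nothing about an atom sitting between $h(I)$ and $h_n(I)$ (or an atom not contained in $U$) whose image cuts across $\mathcal{B}_n$. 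Even your intermediate assertion $\mathcal{P}_R\cap\Phi^{2n}U=\Phi^{2n}(\mathcal{P}_R\cap U)$ is delicate (atoms need not be contained in $U$, so their rescaled traces need not be whole atoms), and you yourself flag this transport of the barrier property as the ``main obstacle'' — so the invariance of your annuli $V_n$, and hence (i) and (ii), is not proved.

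The paper's proof is arranged precisely so that the barrier property is invoked only once, at the original scale, and never rescaled. It forms the Jordan curve $J=\mathfrak{L}_1\cup\mathfrak{L}_d\cup h(I)$ (the embedded arc closed up by segments of $L_1'$ and $L_d'$), proves $R(C_u)\subseteq C_u$ and $R(C_b)\subseteq C_b$ for its two components — using an orientation-preserving homeomorphic extension of $R|_P$ for atoms $P\in\mathcal{B}$ and the barrier conditions for all other atoms — and then defines $V_1=C_b''\cap C_u$, where $C_b''$ is cut out by the scaled-\emph{up} embedding $h'=\Phi^{-2}h$ with $h'(I)\subset U$. Invariance of $C_b''$, and of $V_n=\Phi^{2(n-1)}V_1$, is then obtained purely from Theorem \ref{renormtheorem}, which is legitimate because these sets are contained in $U$; no barrier at smaller scales is ever needed. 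If you want to salvage your scheme, you should replace the claim ``$\mathcal{B}_n$ is a barrier'' by this renormalization step. Separately, in (ii) your sandwich bounds control $|w|$ on $V_n$ but not what is actually needed: a positive lower bound on $\Im(z)$ on $V_n$ (the paper gets $V_n\subseteq\Phi^{2(n-1)}U\setminus\Phi^{2(m+n)}U$) together with the distortion bound $\sin(\beta)\leq\gamma\leq\csc(\beta)$ for the change of height when leaving $P_c$, and the containment in $\mathcal{R}_{\lambda,\beta}$ (Lemma \ref{cor2.3}) to bound the real part during excursions; your appeal to a ``uniformly bounded hitting time'' is not how the bound is obtained and glosses over the rotation step at the start of each excursion.
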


\begin{figure}[t]
	\begin{subfigure}{.49\textwidth}
		\centering
		\includegraphics[width=0.95\linewidth]{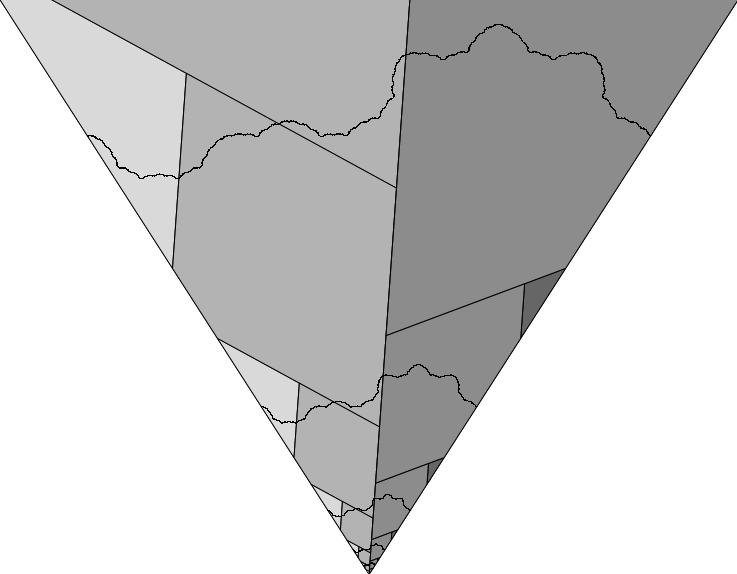}
		\caption{}
		\label{fig:R10}
	\end{subfigure}
	\begin{subfigure}{.49\textwidth}
		\centering
		\includegraphics[width=0.95\linewidth]{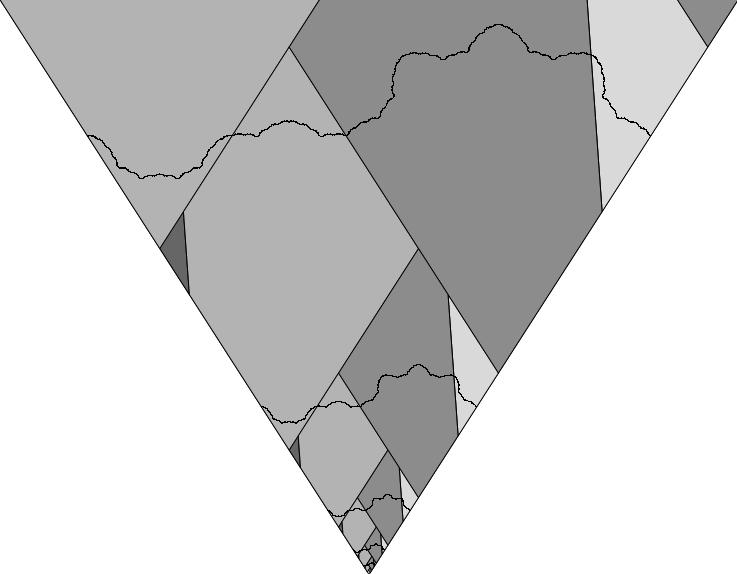}
		\caption{}
		\label{fig:R11}
	\end{subfigure}
	\centering
	\captionsetup{width=1\textwidth}
	\caption{A schematic representation of the action of $R$ on the cone $P_c$ close to the origin, for parameters $\tau=(12)$, $\alpha=(0.5,\pi-2.5)$,  $\lambda=\Phi$ and $\eta=1-\Phi$. $R$ is a PWI with respect to a partition of infinitely many atoms, which correspond to the polygons depicted in the figure (A). In (B) the image of this partition by $R$ can be seen. Each curve in both figures corresponds to the orbit of a given point. By Theorem \ref{renormtheorem} if the orbit remains close to the origin then there are infinitely many replica of this orbit accumulating on the origin. It is still an open question whether the closure of such an orbit can be the image of an embedding of an IET into $R$.}
	\label{fig:R}
\end{figure}

An \emph{horizontal periodic orbit} is a periodic orbit $\mathcal{O}$, such that there is an $h \in \R$ for every $z_k \in \mathcal{O}$ such that $\Im(z_k)=h$ for all $k \in \mathbb{N}$. We say $h$ is the \emph{height} of the orbit. 
An \emph{horizontal periodic island} is a periodic island that contains an horizontal periodic orbit.

Let $\mathcal{R}(\tau)$ denote the set of all $\alpha \in \mathbb{A}$ such that for some $j \in \{1,...,d\}$ we have
\begin{equation}\label{reflective}
\left|\sum_{\tau(k)>\tau(j)}\alpha_k -  \sum_{k<j}\alpha_k \right| < \alpha_{j}.
\end{equation}

Given $\tau \in S(d)$, let $J_{\mathcal{R}}(\tau)$ be the set of all $j\in \{1,...,d\}$ such that \eqref{reflective} holds for some $\alpha \in \mathbb{A}$.

Define the sets $\zeta_{-}(d)$ (resp. $\zeta_{+}(d)$) of all $\tau \in S(d)$ such that there is a $j'\in J_{\mathcal{R}}(\tau)$ and a $j'' \in \{1,...,d\}$ such that $j'<j''$ and $\tau(j'')<\tau(j')$ (resp. $j'>j''$ and $\tau(j'')>\tau(j')$). Denote by $\zeta(d)$ their union $\zeta_{-}(d) \cup \zeta_{+}(d)$.

In our next theorem we prove that there is a non-empty open set of rotation parameters for which TCEs have infinitely many horizontal periodic islands accumulating on the real line.

\begin{thmc}\label{infperisl}
	Let $\tau \in \zeta(d)$, $\lambda= 1/(k+\Phi)$ and $\eta=1-k\lambda$, for some $k \in \N$.
	There is a non-empty open set $\mathcal{A} \subseteq \mathbb{A}\cap \mathcal{R}(\tau)$ such that for all $\alpha \in \mathcal{A}$, $F$ has infinitely many horizontal periodic islands accumulating on the real line.
\end{thmc}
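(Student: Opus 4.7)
The plan is two-fold: first construct one horizontal periodic island $\mathcal{I}_0$ of $F$ located inside the neighborhood $U$ of the origin provided by Theorem A, then apply the renormalization identity $R(\Phi^2 z) = \Phi^2 R(z)$ to generate an infinite family of rescaled copies $\{\Phi^{2n}\mathcal{I}_0\}_{n\geq 0}$ of horizontal periodic islands, whose supporting heights $\Phi^{2n}h$ tend to $0$ so that they accumulate on the real line.

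The first task is to reinterpret the reflective inequality \eqref{reflective} as a geometric condition. Setting $\phi_j := (\pi - \theta_j(\alpha,\tau))/2$ and $L_j := \{re^{i\phi_j}: r > 0\}$, a direct computation from the definitions of $\beta$ and $\theta_j$ shows that \eqref{reflective} for index $j$ is equivalent to $L_j \subset \mathrm{int}(P_j)$ together with its image $L_j' := E(L_j)$, the ray at angle $\pi - \phi_j$, lying in the interior of $E(P_j)$. The key feature is that $E|_{P_j}$, although a genuine rotation around the origin, preserves the imaginary part of any point on $L_j$, since $E(h\cot\phi_j + ih) = -h\cot\phi_j + ih$.

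I would then follow the $F$-orbit of $z_0 = h\cot\phi_j + ih$ for some $j \in J_{\mathcal{R}}(\tau)$ and $h > 0$ small. The first iterate is $F(z_0) = E(z_0) - \eta$, which for small $h$ lies in $P_{d+1}$ when $\tau \in \zeta_-(d)$ (respectively in $P_0$ when $\tau \in \zeta_+(d)$); the subsequent iterates translate horizontally by $+\lambda$ or $-1$, preserving the height $h$, until the orbit re-enters $P_c$ at a point on $L_j$ or a partner reflective line $L_{j'}$. Exploiting the arithmetic relation $\eta = \Phi\lambda$ that follows from $\lambda(k+\Phi)=1$, for an open subset of $\alpha \in \mathbb{A}\cap\mathcal{R}(\tau)$ and a suitably adjusted height $h = h(\alpha)$, the process closes up after $N$ iterates into a horizontal periodic orbit $\mathcal{O}$ of $F$. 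To promote $\mathcal{O}$ to an open island, one verifies that $F^N$ is the identity on a 2-dimensional neighborhood of $z_0$: the differential $DF^N(z_0)$ is a rotation by the sum $\sum_l \theta_{j_l}$ of the rotations accumulated during the visits to $P_c$, and the $\zeta(d)$ hypothesis (through the inversion $(j',j'')$ of $\tau$ it guarantees) enables choosing the itinerary so that this sum vanishes identically in $\alpha$. Then $F^N$ is a Euclidean translation near $z_0$, and since $F^N(z_0)=z_0$ it is the identity on a polygonal neighborhood of $z_0$ bounded by preimages of $\partial \mathcal{P}$; this neighborhood is the horizontal periodic island $\mathcal{I}_0$.

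For the second phase, I would choose $h$ small enough that the full $F$-orbit of $\mathcal{I}_0$ is contained in $U$. Theorem A then implies that for every $n \geq 0$, $\Phi^{2n}\mathcal{I}_0$ is again a horizontal periodic island of $F$, containing the horizontal periodic orbit $\Phi^{2n}\mathcal{O}$ at height $\Phi^{2n}h$, and these accumulate on the real line as $n\to\infty$. The main obstacle is the island-promotion step: ensuring that the cumulative rotation $\sum_l \theta_{j_l}$ vanishes identically in $\alpha$. This is a delicate combinatorial balance that relies crucially on $\tau \in \zeta(d)$, which supplies the inversion needed to pair reflective visits whose rotation contributions cancel; openness of $\mathcal{A}$ then comes from the open nature of the cone-membership inequalities that prescribe the itinerary along the orbit.
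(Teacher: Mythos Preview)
Your two-phase strategy is reasonable, and the second phase is essentially correct: once a single horizontal periodic island $\mathcal{I}_0$ lies inside $U\cap P_c$, the renormalization identity $R(\Phi^2 z)=\Phi^2 R(z)$ from Theorem~A does propagate it to infinitely many rescaled islands $\Phi^{2n}\mathcal{I}_0$ at heights tending to $0$. This is close in spirit to the paper, which instead uses the eventual periodicity of the sequence $p_n(\mu)$ (Theorem~\ref{rencor}) to produce the infinite family directly via Theorem~\ref{ren56}. The real problems are in phase one.

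First, your promotion from periodic orbit to periodic island is based on a misreading of what $\zeta(d)$ provides. You claim the inversion $(j',j'')$ lets you choose an itinerary whose accumulated rotation $\sum_l \theta_{j_l}$ vanishes identically in $\alpha$, so that $F^N$ is the identity near $z_0$. This is neither achievable nor necessary. In the paper the horizontal periodic orbit visits the middle cone exactly once per $R$-period, always in $P_{j'}$, so the accumulated rotation is $\theta_{j'}(\alpha,\tau)$, which is generically nonzero; the island exists anyway, because by Theorem~\ref{thm:i3} any periodic point off $\partial\mathcal{P}$ is surrounded by a disk on which $F^N$ acts as a rotation, foliated by invariant circles. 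The index $j''$ never appears in a second reflective line (it need not lie in $J_{\mathcal{R}}(\tau)$ at all). Its role is entirely different: in the non-emptiness argument one constructs a specific $\tilde\alpha$ by placing nearly all the mass on $\tilde\alpha_{j''}$, which forces $|\theta_{j'}|$ close to $|\alpha|$ and hence $\mu_{j'}/\nu$ close to $\pm 1$; that is precisely the regime in which the condition $p_n(\mu_{j'})\in I_{P(\mu_{j'})}$ of Theorem~\ref{ren56} is satisfied.

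Second, you have not actually closed the orbit. Saying the return to $P_c$ lands ``on $L_j$ or a partner reflective line'' and invoking $\eta=\Phi\lambda$ is not a proof: the first return lands at $\rho(h)$, and one must find $\hat y$ with $\rho(\hat y)=\xi_S(\hat y)$. The paper does this by an intermediate-value argument on $[y_{n+1},y_n)$ (Theorem~\ref{ren56}), using that $\rho$ is piecewise affine of slope $\mu^{-1}$ (Theorem~\ref{t5.2}) and that $p_n(\mu_{j'})\in I_{P(\mu_{j'})}$ forces a sign change of $\Re(\rho)-\Re(\xi_S)$. Your ``suitably adjusted height $h(\alpha)$'' needs exactly this mechanism, or something equivalent, to be more than a wish.
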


As a result  we get that for the same parameter set, TCEs are not ergodic in a neighbourhood of the origin.

\begin{thmd}\label{notergodic}
	Let $\tau \in \zeta(d)$, $\alpha \in \mathbb{A}\cap \mathcal{R}(\tau)$, $\lambda= 1/(k+\Phi)$ and $\eta=1-k\lambda$, for some $k \in \N$. If $U$ is an invariant set for $R$  that contains a neighbourhood of the origin then the restriction of $R$  to $U$ does not have a dense orbit. In particular $F$ is not ergodic.
\end{thmd}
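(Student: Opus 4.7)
The plan is to deduce Theorem D from Theorem C via a topological trapping argument. Under the hypotheses on $\tau$ and $\alpha$, Theorem C yields infinitely many horizontal periodic islands of $F$ accumulating on the real line, while Theorem A provides the scaling conjugacy $R(\Phi^2 z) = \Phi^2 R(z)$ on an open neighbourhood of the origin. Fixing one such island $W_0$ inside that neighbourhood, the rescaled sets $W_n := \Phi^{2n} W_0$ are again horizontal periodic islands of $R$, with the same period and the same angular position in $P_c$ as $W_0$, and $\operatorname{diam}(W_n) = \Phi^{2n}\operatorname{diam}(W_0)\to 0$, so the $W_n$ cluster at the origin inside $P_c$.

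Given the $R$-invariant set $U$ containing a neighbourhood of the origin in $P_c$, I would choose $n$ large enough that $W := W_n \subset U$ and let $p$ denote its $R$-period. Setting
\begin{equation*}
W^* := \bigcup_{i=0}^{p-1} R^i(W),
\end{equation*}
one obtains a nonempty, open, bounded, $R$-invariant subset of $U$ that $R$ cyclically permutes. I would then rule out dense orbits of $R|_U$ in two cases. For $z \in W^*$ the orbit stays trapped in $W^*$, and since $\overline{W^*}$ lies at positive height (of order $\Phi^{2n}$) the open set $U\setminus\overline{W^*}$ is nonempty and missed. For $z\in U\setminus W^*$, injectivity of $R$ (inherited from $F$ being a piecewise isometry) gives $R^{-1}(W^*)=W^*$, so the forward orbit never enters the nonempty open set $W^*\subset U$. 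In either case density fails.

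For non-ergodicity of $F$, the same island produces the $F$-invariant saturation $\widetilde{W}:=\bigcup_{k\in\Z}F^k(W)$, which has positive Lebesgue measure (as $W$ does and $F$ preserves Lebesgue) and is bounded (being a finite union of translates of $W$ under the first-return excursions, all of which live on bounded horizontal periodic orbits); hence its complement is also a positive-measure $F$-invariant set and $F$ is not ergodic. The main technical obstacle I expect is verifying that the rescaled $W_n$ are genuine $R$-periodic islands inside $P_c$ with preserved combinatorics; this should follow from Theorem A together with the scale-invariance of the angular partition $\mathcal{P}$, but one must carefully track that the first-return excursions through $P_0\cup P_{d+1}$ between successive visits to $P_c$ survive the rescaling $z\mapsto\Phi^{2n} z$.
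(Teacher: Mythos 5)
Your proposal is essentially sound and rests on the same mechanism as the paper's proof: the horizontal periodic islands supplied by Theorem C, being unions of invariant circles (Theorem 7.1 in the paper), act as open trapping regions that are incompatible with a dense orbit of $R|_U$. The route differs in how an island is placed inside $U$. The paper does this in one step: in the proof of Theorem C the periodic points are the points $\xi_S(\hat{y}_n)$ on the ray $L_S'(-\mu_{j'})\subset P_c$ with heights $\hat{y}_n\to 0$, so the islands accumulate at the origin and any neighbourhood of the origin already contains one; a dense orbit would have to come arbitrarily close to such a periodic point, hence enter the island, hence be confined to an invariant circle, a contradiction. Your detour through Theorem A --- rescaling one island by $\Phi^{2n}$ to plant copies near the origin --- is not needed for this reason, and it is also the most delicate part of your argument: to know that $\Phi^{2n}W_0$ is again a trapped region for $R$ you must check that the whole $R$-orbit of $W_0\cap P_c$ stays inside the renormalization neighbourhood so that $R(\Phi^2 z)=\Phi^2R(z)$ can be iterated, exactly the obstacle you flag. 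This can be made to work (the island lies in a thin horizontal strip and is $F$-invariant, so its returns to $P_c$ stay below $\overline{y_1}$), but the paper's direct use of the accumulating islands avoids it entirely.

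Two smaller points. First, your appeal to ``injectivity of $R$ (inherited from $F$ being a piecewise isometry)'' is unjustified: piecewise isometries need not be injective, and for TCEs the paper only asserts invertibility of the induced map $\hat{F}$ at infinity, not of $F$ itself. Fortunately this step is dispensable: if the forward orbit of $z\in U\setminus W^*$ ever enters $W^*$ it is trapped from then on and misses $U\setminus\overline{W^*}$, and if it never enters $W^*$ it misses the nonempty open set $W^*$; either way it is not dense. Second, your non-ergodicity argument (a bounded invariant saturation of an island with positive-measure complement) is more explicit than the paper, which simply asserts ``in particular $F$ is not ergodic''; the idea is fine, but be careful with $\bigcup_{k\in\Z}F^k(W)$ and with the claim that $F$ preserves Lebesgue measure, again because $F$ need not be invertible --- taking the increasing union of preimages $\bigcup_{n\ge 0}F^{-n}(\mathcal{I})$ of a full island $\mathcal{I}$ gives a cleaner fully invariant set.
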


This paper is organized as follows. In Section 2 we investigate a family of maps related to IETs. In Section 3 we study the sequence of bifurcation points and the bifurcation sequence for the family of maps introduced in the previous section making use of the theory of continued fractions. In Section 4 we introduce two sequences that we designate by dynamical sequences that will be an important tool to prove our main theorems. We derive inductive formulas to compute these sequences. In Section 5 we study the dynamics of the first return map to the middle cone $P_c$. Finally, in Sections 6 and 7 we prove our main results, theorems A,B, C and D.

\section{Bifurcation Points}\label{Bifurcation Points}
In this section we study a specific family of maps, closely related to IETs, 
on the interval $I=[0,1+\lambda]$
with $\lambda \in \mathbb{R}^+\backslash\mathbb{Q}$. We will introduce the left and right bifurcation points and bifurcation sets for this family. 

Consider the interval $I=[0,1+\lambda]$ and the following family of maps 
\begin{equation}\label{eq:3}
g_{\ell}(x)=\left\{\begin{array}{ll}
x + \lambda, & x \in I_1(\ell)\\
x, & x \in I_c(\ell)\\
x-1, & x \in I_2(\ell).
\end{array}\right.
\end{equation}
with $I_1(\ell)=[0,1]$, $I_c(\ell)=(1,1+\ell)$ and $I_2(\ell)=[1+\ell,1+\lambda]$. To simplify notation we will only include the argument when it is necessary, otherwise we just refer to these intervals as $I_j,$ for $j=1,2,c$.

Given $\beta \in (0, \pi/2)$, consider the region
$$\mathcal{R}_{\lambda,\beta} = \{ z \in \mathbb{H} \backslash P_c: \Re(z) + \Im(z)\cot(\beta) \in [-1,\lambda] \ \textrm{and} \ 2 \Im(z)\cot(\beta)\leq \lambda   \}.$$
The next lemma relates iterates of our family of maps $F$ with iterates of $g_{\ell}$ for some values of $z$.

\begin{lemma}\label{cor2.3}
For any $\lambda>0$, $\beta \in (0, \pi/2)$ and $z \in \mathcal{R}_{\lambda,\beta}$ we have
	\begin{equation}\label{cor2.3eq0}
	F^n(z)=s^{-1}\circ g_{2\Im(z)\cot(\beta)}^n\circ s (\Re(z))+ i \Im(z),
	\end{equation}
	for all $n\leq k(z)$, where $s(x)=x+1+\ell/2$.
\end{lemma}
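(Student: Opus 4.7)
The plan is to prove the identity by induction on $n$, using the fact that on the horizontal line $\Im(w)=y$, the cones intersect cleanly: $P_c \cap \{\Im=y\} = (-y\cot\beta,\,y\cot\beta) = (-\ell/2,\,\ell/2)$, while $P_0$ meets this line in $[\ell/2,\infty)$ and $P_{d+1}$ in $(-\infty,-\ell/2]$, where $\ell = 2y\cot\beta$. The key observation is that $F$ acts as a pure horizontal translation on $P_0 \cup P_{d+1}$: by $-1$ on $P_0$ and by $+\lambda$ on $P_{d+1}$, since $E$ is the identity on these two cones. Therefore, as long as $n < k(z)$ (so that $F^j(z) \notin P_c$ for $j\le n$), the imaginary part is preserved, $\Im(F^n(z))=\Im(z)=y$, and only the real part evolves.

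Writing $x_n := \Re(F^n(z))$, the inductive claim to carry is twofold: (i) $s(x_n) \in [0,1+\lambda]$ so that $g_\ell$ is defined at $s(x_n)$, and (ii) $s(x_n)=g_\ell^n(s(x_0))$. The base case $n=0$ is exactly the defining constraint of $\mathcal{R}_{\lambda,\beta}$: $x_0+\ell/2 \in [-1,\lambda]$ is equivalent to $s(x_0)\in[0,1+\lambda]$, and the condition $\ell\le\lambda$ ensures $I_c(\ell)=(1,1+\ell)\subseteq I$.

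For the inductive step, I split according to which cone contains $F^n(z)$. If $F^n(z)\in P_0$, then $x_n\ge\ell/2$, hence $s(x_n)\ge 1+\ell$; combined with $s(x_n)\le 1+\lambda$ this puts $s(x_n)\in I_2(\ell)$, so $g_\ell(s(x_n))=s(x_n)-1$. On the other hand $F^{n+1}(z)=F^n(z)-1$, so $x_{n+1}=x_n-1$, i.e., $s(x_{n+1})=s(x_n)-1 = g_\ell(s(x_n))$, as required. Symmetrically, if $F^n(z)\in P_{d+1}$ then $x_n\le -\ell/2$, so $s(x_n)\in[0,1]=I_1(\ell)$; both $F$ and $g_\ell$ then add $\lambda$, giving $s(x_{n+1})=g_\ell(s(x_n))$. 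The range condition $s(x_{n+1})\in[0,1+\lambda]$ is preserved in each case by an elementary check using $\ell\ge 0$ and $\lambda\ge 0$: in the first case $x_{n+1}=x_n-1\in[\ell/2-1,\lambda-\ell/2-1]\subset[-1-\ell/2,\lambda-\ell/2]$, and in the second $x_{n+1}=x_n+\lambda\in[\lambda-1-\ell/2,\lambda-\ell/2]\subset[-1-\ell/2,\lambda-\ell/2]$.

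Combining $s(x_n)=g_\ell^n(s(x_0))$ with $\Im(F^n(z))=\Im(z)$ and rearranging gives exactly \eqref{cor2.3eq0}. The main (minor) obstacle is the bookkeeping at the boundary of the inductive step: one must match the cone containing $F^n(z)$ with the correct subinterval of the $g_\ell$-partition and verify that the shift $s$ intertwines them in the two cases simultaneously; the choice $s(x)=x+1+\ell/2$ is precisely tailored so that the middle gap $P_c\cap\{\Im=y\}$ of width $\ell$ is sent onto the central plateau $I_c(\ell)$ of $g_\ell$.
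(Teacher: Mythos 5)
Your argument is correct and is essentially the paper's own proof written out in detail: the paper likewise notes that on $P_0\cup P_{d+1}$ one step of $F$ is conjugated by $s$ to one step of $g_{2\Im(z)\cot(\beta)}$ (since $E$ is the identity there and $G$ translates horizontally by $-1$ or $+\lambda$), that the imaginary part is preserved, and that $F^n(z)$ remains in $\mathcal{R}_{\lambda,\beta}$ for $n\leq k(z)$, then iterates the one-step identity. One cosmetic remark: with the paper's conventions the boundary rays $L_1'$ and $L_d'$ belong to $P_c$ (as $W_0=[0,\beta)$ excludes $\beta$), not to $P_0,P_{d+1}$ as in your interval bookkeeping, but this never affects the induction since for $n<k(z)$ the orbit lies strictly outside $P_c$.
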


\begin{proof}
As $z \in \mathcal{R}_{\lambda,\beta}$ we have $z \in P_{0}\cap \mathcal{R}_{\lambda,\beta}$ or $z \in P_{d+1}\cap \mathcal{R}_{\lambda,\beta}$. By the definitions of $P_{0}$ and $P_{d+1}$, in both cases we have $\Re(F(z))= s^{-1}\circ g_{2\Im(z)\cot(\beta)}\circ s (\Re(z))$.
It is direct to see that for $n \leq k(z)$ we have $F^n(z) \in \mathcal{R}_{\lambda,\beta}$ and thus repeating the previous argument $n$ times we get $\eqref{cor2.3eq0}$.
\end{proof}

We define  the {\it first hitting time} of $x$ to $\overline{I_c(\ell)}$ as the map $n_{\ell}: I \rightarrow \mathbb{N}$ given by
\begin{equation*}\label{eqN2}
n_{\ell}(x) = \inf \{n \geq 1: g_{\ell}^n(x) \in \overline{I_c(\ell)}\},
\end{equation*}
and the {\it first hitting map} of $x$ to $\overline{I_c(\ell)}$, as the map
\begin{equation*}\label{1hitmap}
r_{\ell}(x) = g_{\ell}^{n_{\ell}(x)}(x).
\end{equation*}

For our next lemma we need also to consider the map
\begin{equation*}\label{eqr1}
r'_{\ell}(x)=\left\{\begin{array}{ll}
r_{\ell}(x),& x \notin \overline{I_c(\ell)},\\
x,& x \in \overline{I_c(\ell)}.
\end{array}\right.
\end{equation*}

\begin{lemma}\label{prop2.4}
 Let $\lambda \in \R^+ \backslash \mathbb{Q}$ and $0<\beta<\frac{\pi}{2}$. If $z \in P_c$ with
 $2\Im(F(z))\cot(\beta)\leq \lambda,$ then
 \begin{equation}\label{eqr2}
 R(z)=s^{-1}\circ r'_{2\Im(F(z))\cot(\beta)}\circ s (\Re(F(z)))+ i \Im(F(z)).
 \end{equation}
\end{lemma}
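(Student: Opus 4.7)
My plan is to reduce the first return of $F$ on $P_c$ to the first hitting of $g_{\ell}$ on $\overline{I_c(\ell)}$ by applying Lemma~\ref{cor2.3} to the point $F(z)$. Write $\ell := 2\Im(F(z))\cot(\beta)$ throughout, and split into two cases depending on whether $F(z)$ already lies in $P_c$.

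\emph{Case A: $F(z) \in P_c$.} Here $k(z)=1$ directly, so $R(z)=F(z)$. Because $F(z)\in P_c$, its argument lies in $[\beta,\pi-\beta]$, equivalently $|\Re(F(z))|\le \Im(F(z))\cot(\beta)=\ell/2$; hence $s(\Re(F(z)))\in[1,1+\ell]=\overline{I_c(\ell)}$. Since $r'_{\ell}$ is by definition the identity on $\overline{I_c(\ell)}$, the right-hand side of \eqref{eqr2} simplifies to $\Re(F(z))+i\Im(F(z))=F(z)$, matching $R(z)$.

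\emph{Case B: $F(z)\in P_0\cup P_{d+1}$.} Then $k(z)\ge 2$ and $k(F(z))=k(z)-1$. I would apply Lemma~\ref{cor2.3} to $F(z)$, which requires verifying $F(z)\in\mathcal{R}_{\lambda,\beta}$. The height condition $2\Im(F(z))\cot(\beta)\le\lambda$ is exactly one of the hypotheses, while the horizontal condition $\Re(F(z))+\ell/2\in[-1,\lambda]$ follows from $E(z)\in P_c$ (so $|\Re(E(z))|\le\ell/2$) together with the bounds $0<\eta\le 1$ and $\ell\le\lambda$, since $\Re(F(z))=\Re(E(z))-\eta$. Lemma~\ref{cor2.3} then yields
\[
F^{n+1}(z)=F^{n}(F(z))=s^{-1}\bigl(g_{\ell}^{n}(s(\Re(F(z))))\bigr)+i\Im(F(z))\qquad\text{for }0\le n\le k(z)-1.
\]

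To conclude, I would use that under $s$ the horizontal slice of $P_c$ at height $\Im(F(z))$ corresponds to $\overline{I_c(\ell)}$; consequently $F^{n+1}(z)\in P_c$ iff $g_{\ell}^{n}(s(\Re(F(z))))\in\overline{I_c(\ell)}$, forcing $n_{\ell}(s(\Re(F(z))))=k(z)-1$. Evaluating the display above at $n=k(z)-1$ and observing that $s(\Re(F(z)))\notin\overline{I_c(\ell)}$ in this case (so $r'_{\ell}=r_{\ell}$ there) gives \eqref{eqr2}. The main obstacle is the verification $F(z)\in\mathcal{R}_{\lambda,\beta}$, specifically the lower horizontal bound $\Re(F(z))\ge -1-\ell/2$; this is automatic in the parameter regime of Theorem~\ref{renormtheorem}, where $\lambda<1$ and hence $\eta<1$, but it is the one place where properties of $\eta$ beyond $0<\eta<\lambda$ are implicitly used.
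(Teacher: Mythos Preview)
Your argument is correct and follows essentially the same route as the paper: split on whether $F(z)\in P_c$, and in the nontrivial case verify $F(z)\in\mathcal{R}_{\lambda,\beta}$, invoke Lemma~\ref{cor2.3}, and identify $k(F(z))$ with the first hitting time $n_\ell$. Your write-up is in fact more careful than the paper's in two respects: you consistently use $\Im(F(z))$ rather than $\Im(z)$ in the horizontal bounds, and you explicitly isolate the need for $\eta\le 1$ in the lower bound $\Re(F(z))+\ell/2\ge -\eta\ge -1$, which the paper leaves implicit (and which holds throughout the paper's applications since $\eta<\lambda<1$).
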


\begin{proof}
	It is clear that if $F(z) \in P_c$, then we have \eqref{eqr2}, so we may assume  $F(z) \in \mathbb{H}\backslash P_c$.
%
Since $2 \Im(F(z)) \cot(\beta) \leq \lambda$, by definition of $F$ we get
	\begin{equation*}\label{ineqs4}
	-1-\Im(z)\cot(\beta)\leq \Re(F(z)) \leq \lambda-\Im(z)\cot(\beta),
	\end{equation*}
	and thus $F(z) \in \mathcal{R}_{\lambda,\beta}$. From Lemma \ref{cor2.3} it follows that \eqref{cor2.3eq0} holds for all $n \leq k(F(z))$.
	
	It is simple to see that	
	\begin{equation*}\label{ineqs5}
	k(F(z))=n_{2\Im(F(z))\cot(\beta)}(\Re(F(z))),
	\end{equation*}
	and thus by definition of $r_{\ell}'$ we get \eqref{eqr2} as intended.
\end{proof}

Let $\lambda \in \mathbb{R}^+\backslash\mathbb{Q}$ and $I=[0,1+\lambda]$. Consider the map
\begin{equation*}\label{eqg}
	g(x)=\left\{\begin{array}{ll}
	\vspace{0.2cm}
		x + \lambda, & x \in [0,1]\\
		x-1, & x \in (1,1+\lambda].
	\end{array}\right.
\end{equation*}

Let $N \in \N$. Define
\begin{equation*}\label{dminus}
d^-(N)=\left\{\begin{array}{ll}
\vspace{0.2cm}
1, & \textrm{if} \ g^n(1)>1 \ \textrm{for all} \ 1\leq n \leq N,\\
1-\max_{1\leq n \leq N}\left\{g^n(1)\leq 1\right\}, & \textrm{otherwise},
\end{array}\right.
\end{equation*}
and
\begin{equation*}\label{dplus}
d^+(N)=\left\{\begin{array}{ll}
\vspace{0.2cm}
\lambda, & \textrm{if} \ g^n(1)<1 \ \textrm{for all} \ 1\leq n \leq N,\\
\min_{1\leq n \leq N}\left\{g^n(1)\geq 1\right\}-1, & \textrm{otherwise}.
\end{array}\right.
\end{equation*}

We want now to investigate orbits by $g$ of points which are in a small neighbourhood of $1$. We prove the next lemmas.
\begin{lemma}\label{dlemma} Assume that $\lambda \in \R^+\backslash \mathbb{Q}$.

	i) If $N\geq 0$ and $0 \leq \ell < d^+(N)$, then for all $0 \leq n \leq N$ we have
	\begin{equation}\label{g1ml}
	g^n(1-\ell)=g^n(1)-\ell.
	\end{equation}

	ii) If $N\geq 2$  and $0 \leq \ell \leq d^-(N)$, then for all $2 \leq n \leq N$ we have
	\begin{equation}\label{g1pl}
	g^n(1+\ell)=g^n(1)+\ell.
	\end{equation}
\end{lemma}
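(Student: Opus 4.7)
The plan is to treat both parts by induction on $n$, exploiting that $g$ acts as the translation $+\lambda$ on $[0,1]$ and as $-1$ on $(1,1+\lambda]$, so the identity $g^n(1\pm\ell) = g^n(1)\pm\ell$ propagates from $n$ to $n+1$ as long as $g^n(1)$ and $g^n(1)\pm\ell$ lie in the same piece of the two-interval partition.

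For part (i), I would induct from $n=0$, where $g^0(1-\ell) = 1-\ell$ makes the identity tautological. For the inductive step, assume $g^n(1-\ell) = g^n(1)-\ell$ and consider the position of $g^n(1)$. If $g^n(1) \leq 1$, then $g^n(1)-\ell \leq 1$ automatically, both points receive the shift $+\lambda$, and the identity passes to $n+1$. If $g^n(1) > 1$, the definition of $d^+(N)$ forces $g^n(1) \geq 1+d^+(N) > 1+\ell$, hence $g^n(1)-\ell > 1$; both points lie in $(1,1+\lambda]$ and receive the shift $-1$. The boundary case $g^n(1)=1$ for $n\geq 1$ is excluded by the irrationality of $\lambda$.

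For part (ii), the identity must fail at $n=1$ because $1$ and $1+\ell$ straddle the discontinuity of $g$ at $x=1$, but a direct calculation gives the base case $n=2$: $g^2(1)=\lambda$ and $g^2(1+\ell)=\lambda+\ell$. For the inductive step $n\to n+1$ with $2\leq n<N$, assume $g^n(1+\ell) = g^n(1)+\ell$. If $g^n(1)\leq 1$, the definition of $d^-(N)$ gives $g^n(1) \leq 1-d^-(N) \leq 1-\ell$, so $g^n(1)+\ell\leq 1$ and both points are shifted by $+\lambda$. If $g^n(1)>1$, I need $g^n(1)+\ell \leq 1+\lambda$ so that both points fall in $(1,1+\lambda]$ and get shifted by $-1$; this is the main obstacle, since $d^-(N)$ does not directly control $g^n(1)$ near $1+\lambda$.

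To clear it, I would argue by contradiction: suppose $g^n(1) > 1+\lambda-\ell$ for some $n\in[2,N]$, and look at $g^{n-1}(1)$. If $g^{n-1}(1)\in(1,1+\lambda]$, then $g^n(1) = g^{n-1}(1)-1 \leq \lambda$, which together with $\ell\leq 1$ contradicts $g^n(1) > 1+\lambda-\ell$. Hence $g^{n-1}(1)\in[0,1]$, so $g^n(1) = g^{n-1}(1)+\lambda$ and therefore $g^{n-1}(1) > 1-\ell$. But since $1\leq n-1\leq N$, which is precisely why the conclusion requires $n\geq 2$, the definition of $d^-(N)$ yields $g^{n-1}(1) \leq 1-d^-(N)\leq 1-\ell$, a contradiction. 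This closes the induction, with the subtlety being that $d^-(N)$ must be invoked on the predecessor rather than the current iterate.
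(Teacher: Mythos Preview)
Your proof is correct and follows essentially the same route as the paper's: induction on $n$, with the case split according to which of the two pieces of $[0,1+\lambda]$ the point $g^n(1)$ lies in, using the definitions of $d^\pm(N)$ to force $g^n(1)$ and $g^n(1)\mp\ell$ into the same piece. The paper in fact omits part (ii) as ``similar to (i)'', so your explicit treatment there is more complete than the paper's.

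One simplification is available. The ``main obstacle'' you flag in (ii)---showing $g^n(1)+\ell\le 1+\lambda$ when $g^n(1)>1$---dissolves immediately from the inductive hypothesis itself: you already have $g^n(1+\ell)=g^n(1)+\ell$, and since $g^n(1+\ell)$ lies in the domain $[0,1+\lambda]$, the bound follows at once. Your contradiction argument via $g^{n-1}(1)$ is correct but unnecessary. (The analogous non-issue in (i) is that $g^n(1)-\ell\ge 0$, which likewise follows because $g^n(1-\ell)$ lies in the domain.)
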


\begin{proof}
	To simplify notation we denote $d^+=d^+(N)$ and  $d^-=d^-(N)$.
	Let us prove i) by induction on $n$. It is clear that \eqref{g1ml} holds for $n \in \{0,1\}$. We now assume \eqref{g1ml} holds for $1 \leq n <N$ and we show it holds for $n+1$.
	
It follows from  the definitions of $d^-$ and $d^+$ that 
	$g^n(1)\notin (1-d^-,1+d^+),$
	for  $1 \leq n \leq N$, thus $g^n(1)\leq 1-d^-$ or $g^n(1)\geq 1+d^+$.
	
	If $g^n(1)\leq 1-d^-$, then as $\ell \geq 0$ and since we are assuming \eqref{g1ml} holds for $n$ we have $g^n(1-\ell)\leq 1-d^-$. Therefore $g^n(1-\ell) \in [0,1]$ and since $g^n(1)\in [0,1]$ we get
%
	$$g^{n+1}(1-\ell)=g^{n+1}(1)-\ell.$$

	If  $g^n(1)\geq 1+d^+$, then as $\ell <d^+$ and since we are assuming \eqref{g1ml} holds for $n$ we have $g^n(1-\ell)>1$. Therefore $g^n(1) \in (1,1+\lambda]$ and thus
	\begin{equation*}\label{dlemma2}
	g^{n+1}(1-\ell)=g^n(1)-\ell-1.
	\end{equation*}
	Since $g^n(1)\in (1,1+\lambda]$, we get that \eqref{g1ml}, holds for $n+1$ and we finish the proof of i).
	
	The proof of ii) is similar to the proof of i) so we omit it.
\end{proof}

Given $\ell>0$ and $x \in I\backslash [1,1+\ell]$, we define
\begin{equation}\label{ndm}
d^{-}(x,n_{\ell}(x))=1-\max_{0\leq n\leq n_{\ell}(x)}\left\{g^n(x)\leq 1  \right\}.
\end{equation}

\begin{lemma}\label{l2}	
	Assume  $0<\ell'<\ell$, $x \in I\backslash [1,1+\ell]$ and $x' \in (x-(\ell-\ell'),x+d^{-}(x,n_{\ell}(x))).$ Then for all $n \leq n_{\ell}(x)$ we have
	\begin{equation}\label{dag1}
	g_{\ell}^n(x)-g_{\ell'}^n(x')=x-x'.
	\end{equation}
	Moreover, $n_{\ell'}(x')\geq n_{\ell}(x)$.
\end{lemma}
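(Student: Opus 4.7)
The plan is to prove \eqref{dag1} by induction on $n$, establishing it simultaneously with the auxiliary invariant that $g_{\ell'}^n(x') \notin [1, 1+\ell']$ for $n < n_\ell(x)$. This auxiliary invariant is what makes the induction close: it guarantees that $g_{\ell'}$ acts on $g_{\ell'}^n(x')$ by the same rule as the middle-less map $g$ defined just before Lemma~\ref{dlemma}, exactly as $g_\ell$ does on $g_\ell^n(x)$ by the very definition of $n_\ell(x)$. Writing $\delta := x - x'$, the hypothesis on $x'$ becomes $-d^-(x,n_\ell(x)) < \delta < \ell - \ell'$, and the two inequalities are asymmetric on purpose: $d^-$ is the $g$-orbit's leftward margin that prevents $g_{\ell'}^n(x')$ from crossing the left edge $\{1\}$, while $\ell - \ell'$ is the geometric gap preventing a crossing of the right edge $\{1 + \ell'\}$.

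For the base case $n=0$, equation \eqref{dag1} is trivial and the auxiliary invariant splits along $x \in [0,1)$ versus $x \in (1+\ell, 1+\lambda]$. In the first subcase $x = g^0(x) \le 1$ belongs to the maximizing set in the definition of $d^-(x,n_\ell(x))$, so $x \le 1 - d^-(x,n_\ell(x))$; combining with $\delta > -d^-(x,n_\ell(x))$ yields $x' < 1$, and $x' \in I$ then gives $x' \in [0,1)$. In the second subcase $\delta < \ell - \ell'$ directly forces $x' > 1 + \ell'$, so $x' \in (1+\ell', 1+\lambda]$.

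For the inductive step, I assume both invariants at some $n < n_\ell(x)$ and split on the location of $g_\ell^n(x)$, which by $n < n_\ell(x)$ lies in $[0,1) \cup (1+\ell, 1+\lambda]$. If $g_\ell^n(x) \in [0, 1)$ then $g_\ell^n(x) \le 1 - d^-(x,n_\ell(x))$ by the definition of $d^-$, so the lower bound on $\delta$ forces $g_{\ell'}^n(x') < 1$, and together with $g_{\ell'}^n(x') \in I$ this gives $g_{\ell'}^n(x') \in [0,1)$; both points are then translated by $+\lambda$. If $g_\ell^n(x) \in (1+\ell, 1+\lambda]$ then the upper bound on $\delta$ forces $g_{\ell'}^n(x') > 1 + \ell'$ and hence $g_{\ell'}^n(x') \in (1+\ell', 1+\lambda]$; both points are translated by $-1$. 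In either case \eqref{dag1} passes to $n+1$, and applying the same dichotomy to $g_\ell^{n+1}(x)$ extends the auxiliary invariant whenever $n+1 < n_\ell(x)$. The bound $n_{\ell'}(x') \ge n_\ell(x)$ is then immediate from the auxiliary invariant at $n = 1, \ldots, n_\ell(x)-1$. The main subtlety I expect is checking that the two asymmetric bounds $-d^-$ and $\ell - \ell'$ are exactly what is required to keep $g_{\ell'}^n(x')$ clear of both edges of $[1, 1+\ell']$ at each step; a looser bound on either side would allow the auxiliary invariant to fail and break the induction.
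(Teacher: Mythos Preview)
Your proof is correct and follows essentially the same approach as the paper's: both argue by induction on $n$, using the case split $g_\ell^n(x)<1$ versus $g_\ell^n(x)>1+\ell$ and observing that $g_\ell^n(x)\le 1-d^-$ in the first case (so the bound $\delta>-d^-$ keeps $g_{\ell'}^n(x')<1$) while $\delta<\ell-\ell'$ forces $g_{\ell'}^n(x')>1+\ell'$ in the second, which makes the two maps apply the same translation. Your version is slightly more explicit in packaging the auxiliary invariant $g_{\ell'}^n(x')\notin[1,1+\ell']$ as part of the induction and in treating the base case $n=0$ for that invariant, but the paper's proof derives the same fact along the way and uses it identically to conclude $n_{\ell'}(x')\ge n_\ell(x)$.
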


\begin{proof}
		To simplify notation we denote $d^-=d^{-}(x,n_{\ell}(x))$. We proceed by induction on $n$. It is clear that \eqref{dag1} holds for $n=0$. Now assume \eqref{dag1} holds for $n<n_{\ell}(x)$ and we prove it for $n+1$ instead.
		
		As $n<n_{\ell}(x)$ we have $g_{\ell}^n(x) \notin [1,1+\ell]$. Since we are assuming  \eqref{dag1} holds for $n$, we get 
		$$g_{\ell'}^n(x') \in (g_{\ell}^n(x)-(\ell-\ell'),g_{\ell}^n(x)+d^{-}).$$
		
		If $g_{\ell}^n(x)<1$, then  $g_{\ell}^n(x)\leq 1- d^{-}$ and thus $g_{\ell'}^n(x') \in (1-d^{-}-(\ell-\ell'),1).$
		
		Otherwise, if $g_{\ell}^n(x)>1+\ell$ then $g_{\ell'}^n(x') \in (1+\ell',1+\ell+d^{-}),$
		thus we have  $g_{\ell'}^n(x') \in I_j$ if and only if $g_{\ell'}^n(x) \in I_j$, for $j=0,1$ and $g_{\ell'}^n(x') \notin [1,1+\ell']$. Therefore by \eqref{eq:3} we get $g_{\ell}^{n+1}(x)-g_{\ell'}^{n+1}(x')=x-x'$.
		This proves \eqref{dag1}  for $n \leq n_{\ell}(x)$. 
		
		Since $g_{\ell'}^n(x') \notin [1,1+\ell']$ for $ n<n_{\ell}(x)$ we have $n_{\ell'}(x')\geq n_{\ell}(x)$ and we finish our proof. 
\end{proof}

In the beginning of this section we defined the  first hitting map of $x$ to $\overline{I_c(\ell)}$, as 
$r_{\ell}(x) = g_{\ell}^{n_{\ell}(x)}(x)$, where $n_{\ell}(x)$ is the first hitting time of $x$ to $\overline{I_c(\ell)}$. We want now to investigate when is $1+ \ell$ mapped to $1$ under $r_{\ell}(x)$ and when is $1$ mapped to $1+ \ell$. Note that these are the endpoints of the middle interval $\overline{I_c(\ell)}$. We define the following points and sets.

We say $\ell$ is a {\it right bifurcation point} if $r_{\ell}(1+\ell)=1$, $\ell$ is a {\it left bifurcation point} if $r_{\ell}(1)=1+\ell$ and $\ell$ is a {\it bifurcation point} if it is either a left or right bifurcation point. 

The \emph{left/right bifurcation sets} are defined respectively as
\begin{equation*}\label{BifSetL}
\Lambda_L=\{0< \ell \leq \lambda: \textrm{for all  } l<\ell,\   n_\ell(1)<n_l(1)\},
\end{equation*}
and
\begin{equation*}\label{BifSetR}
\Lambda_R=\{0< \ell \leq \lambda: \textrm{for all  } l<\ell,\   n_\ell(1+\ell)<n_l(1+l)\}.
\end{equation*}

The main result of this section is the next theorem, relating bifurcation points with the bifurcation sets.

\begin{theorem}\label{prop:1}
 $\ell$ is a left (resp. right) bifurcation point if and only if $\ell \in \Lambda_L$ (resp. $\ell \in \Lambda_R$).
 Furthermore,  $\ell \rightarrow n_{\ell}(1)$ and $\ell \rightarrow n_{\ell}(1+\ell)$ are decreasing functions of $\ell$ and the sets $\Lambda_R$, $\Lambda_L$ are discrete with $0$ as the only possible point of accumulation.
\end{theorem}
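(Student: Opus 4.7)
The strategy is to reduce both $n_\ell(1)$ and $n_\ell(1+\ell)$ to first-hitting times of the orbit of $1$ under the simpler two-interval map $g$, and then read all three assertions of the theorem off that orbit. I would set
\[
 M_\ell:=\inf\{n\ge 1:g^n(1)\in[1-\ell,1]\},
\]
and observe that since $g_\ell$ agrees with $g$ until the iterate first enters $[1,1+\ell]$, one has $n_\ell(1)=\inf\{n\ge 1:g^n(1)\in[1,1+\ell]\}$. For the second orbit I would show $n_\ell(1+\ell)=M_\ell$ and $g^{M_\ell}(1+\ell)=g^{M_\ell}(1)+\ell$. The definition of $M_\ell$ forces $g^k(1)<1-\ell$ for every $k\le M_\ell-1$ with $g^k(1)\le 1$, so $d^-(M_\ell-1)>\ell$ and Lemma \ref{dlemma}(ii) yields the shift identity $g^n(1+\ell)=g^n(1)+\ell$ for $2\le n\le M_\ell-1$; a one-step extension by direct case analysis (the points $g^{M_\ell-1}(1)$ and $g^{M_\ell-1}(1)+\ell$ lie in a common piece $[0,1-\ell)$ or $(1,1+\lambda-\ell]$ of the two-interval partition of $g$, hence receive the same translation) carries the identity through $n=M_\ell$. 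Note $M_\ell\ge 2$ because $g(1)=1+\lambda\notin[1-\ell,1]$.

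With these descriptions in hand I would read off the bifurcation characterizations. For the left case, write $N=n_\ell(1)$; membership $\ell\in\Lambda_L$ asks that for every $l<\ell$, $g^n(1)\notin[1,1+l]$ for $n\le N$, which is automatic for $n<N$ (by the definition of $N$) and at $n=N$ forces $g^N(1)>1+l$; passing to the supremum over $l<\ell$ yields $g^N(1)\ge 1+\ell$, and combined with $g^N(1)\in[1,1+\ell]$ forces $g^N(1)=1+\ell$, i.e., $\ell$ is a left bifurcation point. The converse is immediate. The right case is handled by the same argument with $M_\ell$ in place of $N$: $\ell\in\Lambda_R$ iff $g^{M_\ell}(1)<1-l$ for every $l<\ell$, which limits to $g^{M_\ell}(1)=1-\ell$, equivalently $g^{M_\ell}(1+\ell)=1$, i.e., $\ell$ is a right bifurcation point.

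Monotonicity is then immediate: as $\ell$ grows, the target intervals $[1,1+\ell]$ and $[1-\ell,1]$ both expand, so $n_\ell(1)$ and $M_\ell=n_\ell(1+\ell)$ are non-increasing integer-valued functions of $\ell$. For discreteness, irrationality of $\lambda$ makes $g$ a 2-IET conjugate to an irrational circle rotation, hence minimal, so the orbit of $1$ visits any non-empty open subinterval of $I$ in finite time; in particular, for any $\epsilon>0$ both $n_\epsilon(1)$ and $M_\epsilon$ are finite. By the monotonicity just established, on $[\epsilon,\lambda]$ the maps $\ell\mapsto n_\ell(1)$ and $\ell\mapsto M_\ell$ take values in a finite set of positive integers and hence have only finitely many jumps; since $\Lambda_L$ and $\Lambda_R$ coincide with those jump sets (by the equivalences of the previous paragraph), both are discrete in $(0,\lambda]$ with $0$ as the only possible accumulation point.

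The main obstacle is the one-step extension of Lemma \ref{dlemma}(ii) used above: at the boundary step $n=M_\ell$ the hypothesis $\ell\le d^-(M_\ell)$ of the lemma holds with equality exactly at the right bifurcation points and fails strictly otherwise, so the shift identity at that step cannot be obtained by another appeal to the lemma as stated and must instead be verified directly by the partition-based case analysis described above.
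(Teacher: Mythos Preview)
Your approach is correct and conceptually cleaner than the paper's. The paper treats the two equivalences separately and at the level of the family $g_\ell$: for the left case it observes that the $g_\ell$-orbit and the $g_l$-orbit of $1$ coincide until they enter $(1,1+\ell)$, and argues the forward and backward implications by case analysis; for the right case it instead invokes Lemma~\ref{l2} (a comparison between $g_\ell$ and $g_{l'}$ for two different parameters) to track $1+\ell$ versus $1+l$. Discreteness is then obtained, as you do, from monotonicity plus integer-valuedness. Your reduction of \emph{both} hitting times to first passages of the single orbit $\{g^n(1)\}$ into the symmetric targets $[1,1+\ell]$ and $[1-\ell,1]$ unifies the two cases, makes the monotonicity in $\ell$ transparent, and already anticipates the content of Section~\ref{Bifurcation sequence}, where the paper eventually identifies the bifurcation sequences with the closest returns $s_n',s_n''$ of this very orbit (i.e., the semiconvergent errors). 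Your use of minimality of $g$ to guarantee finiteness of $n_\epsilon(1)$ and $M_\epsilon$ on each $[\epsilon,\lambda]$ is also a welcome sharpening of the paper's discreteness paragraph.

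One small edge case deserves a sentence. When $M_\ell=2$ (equivalently $\ell\ge 1-\lambda$, which can occur when $\lambda>1/2$), Lemma~\ref{dlemma}(ii) applied with $N=M_\ell-1=1$ yields nothing, and your one-step extension cannot be launched from $n=1$ since the shift identity fails there: $g(1+\ell)=\ell$ while $g(1)+\ell=1+\lambda+\ell$. The conclusion $g^2(1+\ell)=g^2(1)+\ell$ is nonetheless true (both sides equal $\lambda+\ell$), so this is a one-line direct check rather than a genuine gap; with it added, your argument is complete. Incidentally, for $M_\ell\ge 3$ the second alternative $(1,1+\lambda-\ell]$ in your one-step extension never actually occurs: the shift identity at $M_\ell-1$ forces $g^{M_\ell-1}(1)+\ell\le 1+\lambda$, which combined with $g^{M_\ell}(1)\in[1-\ell,1]$ rules out $g^{M_\ell-1}(1)>1$.
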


\begin{proof}

	
	
	First assume that $r_{\ell}(1)=1+\ell$ and $l <\ell$. By the definitions of $n_{\ell}$ and $r_{\ell}$ we have, for $1 \leq n < n_{\ell}(1)$, that either $g_{\ell}^n(1) < 1$ or $g_{\ell}^n(1) > 1+\ell$. As $l <\ell$, by \eqref{eq:3} we have for $1 \leq n < n_{l}(1)$, $g_{l}^n(1) < 1$ or $g_{l}^n(1) > 1+l$. Thus $n_{\ell}(1)\leq n_{l}(1)$ and $g_{l}^{n_{l}(1)}(1)=g_{\ell}^{n_{\ell}(1)}(1)$.
	Since $g_{\ell}^{n_{\ell}(1)}(1) =1+\ell$ and $1+\ell>1+l$ this shows $g_{l}^{n_{l}(1)}(1)>1+l$ and thus $n_{\ell}(1)< n_{l}(1)$. This proves that if $\ell$ is a left bifurcation point, then  $\ell \in \Lambda_L$.

	Now assume that $r_{\ell}(1) \neq 1+ \ell$. As $\lambda$ is irrational we must have $r_\ell(1) \in (1,1+\ell)$, therefore there is  $0<\ell'<\ell$ such that $g_{\ell}^{n_{\ell}(1)}(1)=1+\ell'$.
	
	We show, by induction on $n$, that for all $l \in [\ell',\ell]$ and $0\leq n \leq n_{\ell}(1)$
	\begin{equation}\label{prop:1e1}
	g_{l}^n(1)=g_{\ell}^n(1).
	\end{equation}
	It is clear that \eqref{prop:1e1} holds for $n=0$. We assume it holds for $n<n_{\ell}(1)$ and we prove it for $n+1$. As $n<n_{\ell}(1)$ we have $g_{\ell}^n(1) \notin (1,1+\ell)$, and since $g_{l}^n(1)=g_{\ell}^n(1)$, this implies that $g_{l}^n(1)\notin (1,1+l)$, thus by \eqref{eq:3} we have that \eqref{prop:1e1} must hold for $n+1$.
	
	Since \eqref{prop:1e1} holds for $n=n_{\ell}(1)$ we have $g_{l}^{n_{\ell}(1)}(1)=1+\ell'$ and thus $n_{\ell}(1)=n_{l}(1)$ for all $l \in [\ell',\ell]$.
	
	Thus, there is  $l<\ell$ such that $n_{\ell}(1)\geq n_{l}(1)$. 	
	This proves that $\ell$ is a left bifurcation point if and only if $\ell \in \Lambda_L$. Note that it also shows that $\ell \rightarrow n_{\ell}(1)$ is a decreasing function of $\ell$.

	
	
	By Lemma \ref{l2}, for all $l <\ell$ and $0 \leq n \leq n_{\ell}(1+\ell)$ we have
	\begin{equation}\label{prop:1e2}
	g_{l}^n(1+l)=g_{\ell}^n(1+\ell)-(\ell-l).
	\end{equation}
	From which follows that $g_{l}^{n_{\ell}(1+\ell)}(1+l)=g_{\ell}^{n_{\ell}(1+\ell)}(1+\ell)-(\ell-l).$
	If $r_{\ell}(1+\ell)=1$, as $r_{\ell}(1+\ell)=g_{\ell}^{n_{\ell}(1+\ell)}(1+\ell)$, this implies
	\begin{equation*}\label{prop:1e4}
	g_{l}^{n_{\ell}(1+\ell)}(1+l)=1-(\ell-l) \notin \overline{I_c(l)},
	\end{equation*}
	thus, $n_{\ell}(1+\ell)<n_{l}(1+l)$.
	Then for all $l<\ell$, we have  $n_{\ell}(1+\ell)<n_{l}(1+l)$. This proves  that if $\ell$ is a right bifurcation point then $\ell \in \Lambda_R$,

	If $r_{\ell}(1+\ell)\neq 1$, as $\lambda$ is irrational we must have $r_\ell(1+\ell) \in (1,1+\ell)$, therefore there is an $0<\ell'<\ell$ such that $g_{\ell}^{n_{\ell}(1+\ell)}(1+\ell')=1$.
	
	Now take $l \in [\ell',\ell)$. By \eqref{prop:1e2} we get
	$$g_{l}^{n_{\ell}(1+\ell)}(1+l)=1+l-\ell' \in [1,1+l),$$
	hence $g_{l}^{n_{\ell}(1+\ell)}(1+l) \in \overline{I_c(l)}$ and we have $n_{\ell}(1+\ell)=n_{l}(1+l)$.
	Thus, there is a $l<\ell$ such that $n_{\ell}(1+\ell)\geq n_{l}(1+l)$. This proves that if $\ell \in \Lambda_R$ then $\ell$ is a right bifurcation point.	
	This proves that $\ell$ is a right bifurcation point if and only if $\ell \in \Lambda_R$. Note that it also shows that $\ell \rightarrow n_{\ell}(1+\ell)$ is a decreasing function of $\ell$.
	
	Since $\ell \rightarrow n_{\ell}(1)$ and $\ell \rightarrow n_{\ell}(1+\ell)$ are decreasing functions of $\ell$ and are also integer valued functions this implies that the sets $\Lambda_L$ and $\Lambda_R$ are discrete and each has at most one point of accumulation, which has to be $0$.
\end{proof}

\section{Bifurcation sequence}\label{Bifurcation sequence}

In this section we study the sequence of bifurcation points for the family $g_l$ (in \eqref{eq:3}). We will first recall some elements of the theory of continued fractions, and compute the sequence of errors of the semiconvergents of $\lambda=1/(k+\Phi)$, where $\Phi=(\sqrt{5}-1) /2$ and $k \in \N$. 
We will then relate the bifurcation sequence with the theory of continued fractions by showing that this sequence is equal to the sequence of errors of the semiconvergents of $\lambda$.

Throughout this section we assume that $\lambda \in (0,1)$ is an irrational real number with continued fraction expansion $\lambda=[0,\lambda_1,\lambda_2,...]$. Consider the sequence of its \emph{convergents} given by
\begin{equation*}\label{conv}
\left\{\frac{p_n}{q_n}\right\}_{n \geq 0}, \ \textrm{where} \ \ \frac{p_0}{q_0}=\frac{0}{1} \ \ \textrm{and} \ \ \frac{p_n}{q_n}=[0,\lambda_1,...,\lambda_n].
\end{equation*}


For all $n\geq 0$ it is well known that
\begin{equation}\label{CF}
\begin{array}{l}
p_{n+2}=p_{n}+ \lambda_{n+2} p_{n+1},\\
q_{n+2}=q_{n}+ \lambda_{n+2} q_{n+1}.
\end{array}
\end{equation}

Define the sequence of \emph{upper semiconvergents} of $\lambda$ as
$$\left \{\frac{p_n'}{q_n'} \right \}_n = \{[0,1], ...,[0,\lambda_1],[0,\lambda_1,\lambda_2,1],...,[0,\lambda_1,\lambda_2,\lambda_3],...\}.$$
which is the  sequence of best rational approximations of $\lambda$ by above , this is, any other fraction $\frac{a}{b}\neq \frac{p_n'}{q_n'}$, with $1 \leq b \leq q_n'$, satisfies $a - b \lambda >  p_n'-q_n' \lambda$ (see for instance \cite{Khi}).

The sequence of errors of approximation of the upper semiconvergents smaller than $\lambda$ is given by
$$\Gamma'_n=\left \{p_{n+\lambda_1-1}'-q_{n+\lambda_1-1}' \lambda \right \}_n.$$

Analogously, we define the sequence of \emph{lower semiconvergents} of $\lambda$ as
$$\left \{\frac{p_n''}{q_n''} \right \}_n = \{0,[0,\lambda_1,1],...,[0,\lambda_1,\lambda_2],[0,\lambda_1,\lambda_2,\lambda_3,1],...,[0,\lambda_1,\lambda_2,\lambda_3,\lambda_4],...\}.$$
which is the sequence of best rational approximations of $\lambda$ by below, this is, any other fraction $\frac{a}{b}\neq \frac{p_n''}{q_n''}$, with $1 \leq b \leq q_n''$, satisfies $b \lambda -a > q_n'' \lambda -p_n''$.

The sequence of errors of approximation of the lower semiconvergents is given by
$$\Gamma_n''=\left \{q_n'' \lambda - p_n'' \right \}_n.$$
Note that  $\Gamma'$ and $\Gamma''$ are monotonic sequences of positive real numbers that converge to $0$.
Finally, we define recursively the intercalation of $\Gamma_n'$ and $\Gamma_n''$ as $\Gamma_n$ given by
\begin{equation*}\label{Gamma}
\Gamma_0=\max(\Gamma'_0,\Gamma''_0), \quad \Gamma_n=\max\left((\Gamma'\cup\Gamma'')\backslash \bigcup_{k=0}^{n-1}\Gamma_k\right), \quad n \geq 1.
\end{equation*}

In the next lemma,  we compute explicitly the sequences $\Gamma_n$, $\Gamma'_n$ and $\Gamma''_n$.
\begin{lemma}\label{tnewlambda}
 Let $\Phi=(\sqrt{5}-1)/2$, $k \in \N$ and $\lambda=1/(k+\Phi)$. For all $n \geq 0$ we have
 \begin{equation}\label{tnewlambdaeq1}
 \Gamma'_n=\lambda \Phi^{2n+1},
 \end{equation}
 \begin{equation}\label{tnewlambdaeq1a}
 \Gamma''_n=\lambda \Phi^{2n},
 \end{equation}
 and
 \begin{equation}\label{tnewlambdaeq2}
 \Gamma_n=\left\{
 \begin{array}{ll}
 \Gamma'_{(n-1)/2}, & {\textrm if \ } n { \ \textrm is \ odd},\\
 \Gamma''_{n/2}, & {\textrm if \ } n { \ \textrm is \ even}.
 \end{array}\right.
 \end{equation}
\end{lemma}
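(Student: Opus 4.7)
The plan is to observe that $\lambda=1/(k+\Phi)$ has a very rigid continued fraction expansion and that the claimed formulas are just the standard approximation errors for such a CF. First I compute the expansion: since $1/\lambda=k+\Phi$ and $1/\Phi=1+\Phi$ (both consequences of $\Phi^2+\Phi=1$), iterating the Euclidean algorithm gives $\lambda=[0;k,1,1,1,\ldots]$, so $\lambda_1=k$ and $\lambda_n=1$ for all $n\geq 2$. The convergent recurrence \eqref{CF} therefore collapses to $p_n=p_{n-1}+p_{n-2}$, $q_n=q_{n-1}+q_{n-2}$ for $n\geq 2$.

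Next I introduce the signed error $e_n:=q_n\lambda-p_n$. The same recurrence gives $e_n=e_{n-1}+e_{n-2}$ for $n\geq 2$, while a direct computation yields the base values $e_0=\lambda$ and $e_1=k\lambda-1=-\Phi/(k+\Phi)=-\lambda\Phi$. A short induction, using $1-\Phi=\Phi^2$ in the form $\Phi^{n-1}(1-\Phi)=\Phi^{n+1}$, then gives
\[
e_n=(-1)^n\lambda\Phi^n,\qquad n\geq 0,
\]
so the odd convergents overshoot $\lambda$ and the even ones undershoot it, with errors of modulus $\lambda\Phi^{2n+1}$ and $\lambda\Phi^{2n}$ respectively.

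I then match these with the semiconvergents. Because $\lambda_n=1$ for every $n\geq 2$, the only semiconvergents that are not themselves convergents are the first-level upper approximations $1/1,1/2,\ldots,1/k$. Hence, after discarding the $k-1=\lambda_1-1$ coarse upper semiconvergents (which is precisely what the index shift in the definition of $\Gamma'_n$ does), the remaining upper semiconvergents are exactly $\{p_{2n+1}/q_{2n+1}\}_{n\geq 0}$, while the lower ones (with the convention $p''_0/q''_0=0/1$) are $\{p_{2n}/q_{2n}\}_{n\geq 0}$. Substituting the formula for $e_n$ immediately gives \eqref{tnewlambdaeq1} and \eqref{tnewlambdaeq1a}.

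Finally, \eqref{tnewlambdaeq2} follows by monotone merging: since $\{\lambda\Phi^m\}_{m\geq 0}$ is strictly decreasing, the disjoint union $\Gamma'\cup\Gamma''=\{\lambda\Phi^m:m\geq 0\}$ is already sorted, with odd exponents supplied by $\Gamma'$ and even ones by $\Gamma''$, so the iterative maximum defining $\Gamma_n$ picks out $\Gamma_n=\lambda\Phi^n$ with origin alternating by parity, as claimed. There is no deep obstacle here; the only step requiring real care is the identification of which semiconvergents actually survive when $\lambda_n=1$ for $n\geq 2$, after which everything is a mechanical consequence of $\Phi^2=1-\Phi$.
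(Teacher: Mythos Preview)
Your proof is correct and follows essentially the same route as the paper: both compute the convergent errors $q_n\lambda-p_n$ for the continued fraction $[0;k,1,1,\ldots]$ and then identify the (shifted) semiconvergents with the odd/even convergents because all partial quotients beyond the first equal $1$. The paper carries this out via explicit Fibonacci identities $p_n=F_n$, $q_n=F_nk+F_{n-1}$ and a slightly heavier simultaneous induction, whereas your use of the signed error $e_n=q_n\lambda-p_n$ satisfying the same three-term recurrence is a cleaner packaging of the same computation.
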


\begin{proof}
			Let $\{F_n\}_{n \geq 0}$, be the Fibonacci sequence, given by $F_0=0$, $F_1=1$ and
			\begin{equation*}\label{fib}
			F_n=F_{n-1}+F_{n-2},
			\end{equation*}
			for $n \geq 2$.

			We begin by proving, by induction on $n$, that for all $n \geq 0$, 
			\begin{equation}\label{tnewlambdaeq3}
			\lambda \Phi^{2n+1}=F_{2n+1}-(F_{2n+1} k + F_{2n})\lambda,
			\end{equation}
			and that for all $n\geq 1$,
			\begin{equation}\label{tnewlambdaeq4}
			\lambda \Phi^{2n}=(F_{2n} k + F_{2n-1})\lambda-F_{2n}.
			\end{equation}
			Clearly, \eqref{tnewlambdaeq3} holds for $n=0$ and  \eqref{tnewlambdaeq4} holds for $n=1$.			
			Assuming that \eqref{tnewlambdaeq3} holds for $n$, \eqref{tnewlambdaeq4} holds for $n+1$ and using $1-\Phi=\Phi^2$, we get
			$F_{2n+3}-(F_{2n+3}k+F_{2n+2})\lambda=\lambda \Phi^{2n+3}.$ The proof of \eqref{tnewlambdaeq4} is similar to the proof of \eqref{tnewlambdaeq3} and so we omit it.

			Using the fact that $\{F_n\}_{n \geq 0}$ is the Fibonacci sequence and some elementary properties of continued fractions it can be easily proved by induction on $n$ that
			\begin{equation}\label{tnewlambdaeq7}
			p_n=F_{n}, \quad q_n= F_{n}k+F_{n-1}.
			\end{equation}

			Finally we show that \eqref{tnewlambdaeq1} holds for all $n\geq 0$. It is clear  that $\Gamma'_0=1-k \lambda= \lambda \Phi$ and $\Gamma''_n=\lambda \Phi^{2n}$ for $n=0,1$. Hence \eqref{tnewlambdaeq1} holds for $n=0$, and \eqref{tnewlambdaeq1a} holds for $n=0,1$.
			
			Now assume \eqref{tnewlambdaeq1} holds for all $0\leq n\leq N$ and \eqref{tnewlambdaeq1a}  for all $0\leq n\leq N+1$. We now prove that \eqref{tnewlambdaeq1} holds  for $0\leq n\leq N+1$ and \eqref{tnewlambdaeq1a}  for all $0\leq n\leq N+2$. We have
			$\Gamma_0''>\Gamma_0'>\Gamma_1''>...>\Gamma_{N}''>\Gamma_{N}'>\Gamma_{N+1}''.$

			Thus, we have \eqref{tnewlambdaeq2} for $n\leq 2(N+1)$, also
			\begin{equation}\label{tnewlambdaeq12}
			p''_{n}=p_{2n}, \quad  q''_{n}=q_{2n},
			\end{equation}
			for $1 \leq n\leq N+1$, and
			\begin{equation}\label{tnewlambdaeq13}
			p'_{n+\lambda_1-1}=p_{2n+1}, \quad  q'_{n+\lambda_1-1}=q_{2n+1},
			\end{equation}
			for $0 \leq n\leq N$.

			By \eqref{tnewlambdaeq3} and \eqref{tnewlambdaeq7}, we get 
			$p_{2N+3}-\lambda q_{2N+3}=\lambda \Phi^{2N+3}>0$. Thus, 
			 from \eqref{tnewlambdaeq12} and \eqref{tnewlambdaeq13} we have
			$p'_{N+\lambda_1}=p_{2N+3}, q'_{N+\lambda_1}=q_{2N+3}$ and we get 
 $\Gamma'_{N+1}=\lambda \Phi^{2N+3}$. This proves \eqref{tnewlambdaeq1} for  $0 \leq n\leq N+1$.

			Now, by \eqref{tnewlambdaeq4} and \eqref{tnewlambdaeq7}, 
			$\lambda q_{2N+4}-p_{2N+4}=\lambda \Phi^{2N+4}.$ Thus, from 
			 \eqref{tnewlambdaeq12} and \eqref{tnewlambdaeq13} we have
			$p''_{N+2}=p_{2N+4}, \quad  q''_{N+2}=q_{2N+4}$ and we get
		 $\Gamma''_{N+2}=\lambda \Phi^{2N+4}$. This proves now \eqref{tnewlambdaeq1a} for  $0 \leq n\leq N+2$.
			
			This completes our proof.
\end{proof}

Let $k'_0=\lambda_1+1,  k'_n=\min\{k\geq 1 : g^{k'_{n-1}}(1)<g^{k}(1)<1\}$ for all $n\geq 1$, $s'_n=1-g^{k'_n}(1),$ and consider the the sequence $S'$ given by
\begin{equation*}\label{S'}
S'=\{s'_n\}_{n\geq 0}.
\end{equation*}
We have 
$k'_n=\min\{k\geq 1 : 1-s'_{n-1}<g^{k}(1-s'_{n-1})<1\}+ k'_{n-1}.$
Also let $k''_1=\lambda_1+2,  k''_n=\min\{k\geq 1 : 1<g^{k}(1)<g^{k''_{n-1}}(1)\}$ for all $n\geq 2$, $s''_0=\lambda$ and $s''_n=g^{k''_n}(1)-1$, for $n \geq 1$. We define another sequence $S''$ as
\begin{equation*}\label{S''}
S''=\{s''_n\}_{n\geq 0},
\end{equation*}
Note that 
$k''_n=\min\{k\geq 1 : 1<g^{k}(s''_{n-1}+1)<s''_{n-1}+1\}+ k''_{n-1}.$
We are interested in studying the bifurcation sets $\Lambda_L$ and $\Lambda_R$ of $g$. By Theorem \ref{prop:1} they are discrete with $0$ as the only possible point of accumulation, hence they can be regarded as decreasing sequences, which we now define. Let the  \emph{right bifurcation sequence} $\Lambda'=\{\Lambda'_n\}_n$ be given by
\begin{equation*}\label{rbs}
\Lambda'_0=\max(\Lambda_R), \quad \Lambda'_n=\max\left(\Lambda_R\backslash \bigcup_{k=0}^{n-1}\Lambda'_k\right), \quad n \geq 1,
\end{equation*}
the \emph{left bifurcation sequence} $\Lambda''=\{\Lambda''_n\}_n$ by
\begin{equation*}\label{lbs}
\Lambda''_0=\max(\Lambda_L), \quad \Lambda''_n=\max\left(\Lambda_L\backslash \bigcup_{k=0}^{n-1}\Lambda''_k\right), \quad n \geq 1,
\end{equation*}
and finally we define recursively the sequence of all bifurcation points of $g$, $\Lambda_n$ (it follows from Theorem \ref{prop:1} that it is equal to the intercalation of $\Lambda'$ and $\Lambda''$)
\begin{equation*}\label{bs}
\Lambda_0=\max(\Lambda'_0,\Lambda''_0), \quad \Lambda_n=\max\left((\Lambda_R\cup\Lambda_L)\backslash \bigcup_{k=0}^{n-1}\Lambda_k\right), \quad n \geq 1.
\end{equation*}

In the next lemma we relate the sequences $s'_n$ and  $s''_n$ with $\Lambda'_n$ and $\Lambda''_n$ for all $n \geq 0$.
\begin{lemma}\label{n1}
The sequences $S', S''$ are equal to $\Lambda', \Lambda''$, respectively. 
\end{lemma}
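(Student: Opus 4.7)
The plan is to give a combinatorial description of $\Lambda_L$ and $\Lambda_R$ in terms of the $g$-orbit of $1$, and to observe that this description reproduces exactly the recursive definitions of $S''$ and $S'$.

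First, I would establish the following characterization: $\ell \in \Lambda_L$ iff there exists $n \geq 1$ with $g^n(1) = 1+\ell$ and $g^j(1) \notin (1, 1+\ell]$ for every $1 \leq j < n$; dually, $\ell \in \Lambda_R$ iff there exists $n \geq 2$ with $g^n(1) = 1-\ell$ and $g^j(1) \notin [1-\ell, 1)$ for every $1 \leq j < n$. The left case follows from the fact that $g_\ell \equiv g$ on $I \setminus [1, 1+\ell]$: a short induction gives $g_\ell^j(1) = g^j(1)$ for $j \leq n_\ell(1)$, so the defining condition $r_\ell(1) = 1+\ell$ becomes exactly the stated orbit condition. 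For the right case, I would start from $g(1+\ell) = \ell$, write $g_\ell^j(1+\ell) = g^{j-1}(\ell)$ for $1 \leq j \leq n_\ell(1+\ell)$, and then translate back to the orbit of $1$ using that $g$ is rotation by $\lambda$ on the circle $\R/(1+\lambda)\Z$. A short circle computation yields the key equivalence $g^m(\ell) \in [1, 1+\ell] \iff g^{m+1}(1) \in [1-\ell, 1]$ for $m \geq 0$, and in particular $g^m(\ell) = 1 \iff g^{m+1}(1) = 1-\ell$; applied with $m = n_\ell(1+\ell) - 1$ this gives the required orbit characterization for right bifurcation points.

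The second step is a term-by-term comparison with the definitions of $S'$ and $S''$. The left characterization says that $g^n(1)$ is a strict running minimum of those $g^j(1)$, $1 \leq j \leq n$, lying above $1$; since $g^1(1) = 1+\lambda$ is the initial such value, $s''_0 = \lambda = g^1(1)-1$ is the largest element of $\Lambda_L$, and the recursion $k''_n = \min\{k \geq 1 : 1 < g^k(1) < g^{k''_{n-1}}(1)\}$ enumerates the successive strict new minima above $1$, yielding $s''_n = g^{k''_n}(1) - 1 = \Lambda''_n$. Analogously, the right characterization says that $g^n(1)$ is a strict running maximum below $1$; the initial block $g^2(1), \ldots, g^{\lambda_1+1}(1) = \lambda, 2\lambda, \ldots, \lambda_1\lambda$ pins down $k'_0 = \lambda_1 + 1$ and $s'_0 = 1 - \lambda_1\lambda$, and the recursion for $k'_n$ picks out the subsequent new maxima below $1$, giving $s'_n = 1 - g^{k'_n}(1) = \Lambda'_n$. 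Theorem \ref{prop:1} ensures both enumerations are well defined and exhaust the respective bifurcation sets.

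The main technical point I anticipate is the circle-rotation bookkeeping in the right case. Writing $g^m(\ell) \equiv g^m(1) + \ell - 1 \pmod{1+\lambda}$, the preimage of $[1,1+\ell]$ is $[1-\ell-\lambda, 1-\lambda] \pmod{1+\lambda}$, which may wrap through $0$. One must verify that in the wrapped branch near $1+\lambda$ the value $g^m(\ell)$ lands in $[0,\ell) \subset [0,1)$ and hence cannot hit $[1,1+\ell]$, so that only the branch corresponding to $g^{m+1}(1) \in [1-\ell,1]$ actually produces a middle-hit. Once this case analysis is settled, the identifications $S' = \Lambda'$ and $S'' = \Lambda''$ are immediate.
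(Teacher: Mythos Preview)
Your approach is correct and takes a genuinely different route from the paper. The paper argues by direct induction on $n$: for $S''=\Lambda''$ it uses only that $g_\ell$ and $g$ agree on the orbit of $1$ until the first entry into $[1,1+\ell]$, while for the harder $S'=\Lambda'$ case it invokes Lemma~\ref{dlemma} to compute $g^{k'_{n+1}}(1+\ell)$ explicitly for $\ell\in[s'_{n+1},s'_n)$ and reads off that $s'_{n+1}$ is exactly the largest such $\ell$ landing on $1$. You instead give a uniform combinatorial characterization of both bifurcation sets --- $\Lambda_L$ as the successive record minima of $\{g^n(1)\}$ above $1$ and $\Lambda_R$ as the successive record maxima below $1$ --- via the circle-rotation identity $g^m(\ell)\equiv g^{m+1}(1)+\ell\pmod{1+\lambda}$, and then observe that the recursions defining $S''$ and $S'$ enumerate precisely those records (with $k'_0=\lambda_1+1$ correctly skipping the early maxima $\lambda,2\lambda,\dots,(\lambda_1-1)\lambda$ that lie below $1-\lambda$, reflecting the restriction $\ell\le\lambda$). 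Your route is more conceptual and makes the link to best-approximation theory, and hence to Theorem~\ref{n7}, transparent from the outset; the paper's route stays within its established toolkit and avoids the modular wrap-around case analysis you flag, which is the only real work in your argument and does indeed reduce to checking that the second branch lands in $(0,\ell)$ and so never produces a middle hit.
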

\begin{proof}
We first prove by induction on $n$, that $s''_n=\Lambda''_n$, for all $n \in \N$. It is clear that $s''_0=\lambda=\Lambda''_0$. Assuming  $s''_n=\Lambda''_n$, we have $k''_n=\min\{k\geq 1 : 1<g^{k}(1)<1+\Lambda''_n\},$
and $n_{\ell}=k''_{n+1}$, for all $\ell \in [s''_{n+1},\Lambda''_n)$. This shows that $s''_{n+1}\geq \Lambda''_{n+1}$. As $g^{k''_{n+1}}(1)=1+s''_{n+1}$, we get that $s''_{n+1}= \Lambda''_{n+1}$. This proves that $S''$ is equal to $\Lambda''$.

We now prove, by induction on $n$, that $s'_n=\Lambda'_n$, for all $n \in \N$. It is clear that $s'_0=1-\lambda_1 \lambda=\Lambda'_0$, where $\lambda_1$ is the first coefficient in the continued fraction expansion of $\lambda$. Assume $s'_n=\Lambda'_n$. Let $\ell$ be a constant such that $s'_{n+1}\leq \ell <s'_n$. Since $s'_n=d^-(k'_{n+1}-1)$, $\ell < d^-(k'_{n+1}-1)$, where $k'_{n+1}-1>k'_{0}-1\geq2$ and with $k=k'_{n+1}$, by Lemma \ref{dlemma}, we have $g^{k'_{n+1}}(1+\ell)=g(g^{k'_{n+1}-1}(1)+\ell)$ for $s'_{n+1}\leq \ell < s'_n$ and since 
 $g^{k'_{n+1}}(1)=1-s'_{n+1}$ and $\lambda<1$, we have  $g^{k'_{n+1}-1}(1)=1-\lambda-s'_{n+1}$. Combining these, we get
\begin{equation}\label{n1eq5}
g^{k'_{n+1}}(1+\ell)=1-s'_{n+1}+\ell.
\end{equation}

By Lemma \ref{dlemma} we have
$g^{k}(1+\ell)=g^{k}(1) + \ell,$
for all $1<k<k_{n+1}'$. Note that  $g^k(1)\notin (g^{k'_n}(1),1)$. Combining these we get
$g^k(1+\ell)\notin (1-s'_n+\ell,1+\ell),$
and since $\lambda$ is irrational and $\ell < s'_n$, this gives
\begin{equation*}\label{n1eq2}
g^k(1+\ell)\notin [1,1+\ell],
\end{equation*}
for all $k<k_{n+1}'$. Thus from \eqref{n1eq5}, we get that $s'_{n+1}$ is the largest value $\ell$ can take such that $g^{k'_{n+1}}(1+\ell)=1$. Since we have $s'_n=\Lambda'_n$, this proves that $s'_{n+1}=\Lambda'_{n+1}$, and so,  that $S'$ is equal to $\Lambda'$.

\end{proof}

In the next theorem we relate  the sequences of errors of upper/lower semiconvergents of $\lambda$ with the right/left bifurcation sequences of $\lambda$.

\begin{theorem}\label{n7}
Assume $\lambda\in (0,1)$ is an irrational number. The sequences $\Lambda '$ and $\Lambda ''$ are equal to $\Gamma '$ and $\Gamma ''$, respectively. Moreover, the associated bifurcation sequence $\Lambda $ is equal to the sequence $\Gamma $ of errors of the semiconvergents of $\lambda$.
\end{theorem}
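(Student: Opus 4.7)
The plan is to first apply Lemma~\ref{n1} to reduce the theorem to proving the two equalities $S' = \Gamma'$ and $S'' = \Gamma''$. The intercalation statement $\Lambda = \Gamma$ then follows automatically, because $\Lambda$ and $\Gamma$ are both defined as the same max-based interleaving of their constituent sequences. So the whole result hinges on matching the dynamically defined sequences $S', S''$ with the number-theoretic sequences $\Gamma', \Gamma''$.

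Next I would recast the iteration of $g$ as essentially a rotation by $\lambda$ on the circle. Concretely, for $k \geq 1$ one has $g^k(1) \equiv 1 + k\lambda \pmod{1}$, considered in $[0, 1+\lambda]$, with $g^k(1) > 1$ iff the fractional part $\{k\lambda\}$ lies in $(0,\lambda]$. In this picture, $s''_n = g^{k''_n}(1) - 1 \in (0,\lambda]$ is precisely the $n$-th successive minimum of $\{m\lambda\}$ approached through positive values, and $s'_n = 1 - g^{k'_n}(1) \in (0,1)$ is the $n$-th successive minimum approached from below (that is, the distance of $\{m\lambda\}$ to $1$ from below).

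I would then proceed by simultaneous induction on $n$ to show that $k''_n$ equals the denominator $q''_n$ of the $n$-th lower semiconvergent and $s''_n = q''_n \lambda - p''_n = \Gamma''_n$; the argument for $S' = \Gamma'$ is symmetric, using upper semiconvergents with the index shift $n + \lambda_1 - 1$ that compensates for the fact that the first $\lambda_1-1$ upper semiconvergents $1, 1/2, \dots, 1/(\lambda_1 - 1)$ precede the first genuine best approximation from above, which is $1/\lambda_1$. The base cases $s''_0 = \lambda = \Gamma''_0$ and $s'_0 = 1 - \lambda_1\lambda = \Gamma'_0$ are direct computations from the definitions, already recorded in the proof of Lemma~\ref{n1}. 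The inductive step rests on the classical characterization of lower (resp.\ upper) semiconvergents as the one-sided best rational approximations of $\lambda$: the smallest $m > q''_{n-1}$ for which $\{m\lambda\} \in (0, \{q''_{n-1}\lambda\})$ is exactly $q''_n$, and this new minimum equals $q''_n\lambda - p''_n$; matching this with the definition $k''_n = \min\{k \geq 1 : 1 < g^k(1) < g^{k''_{n-1}}(1)\}$ (after translating time indices via the rotation interpretation) completes the step.

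The main obstacle will be making the one-sided best-approximation property precise and tracking index shifts between the sequences. In particular, the $\lambda_1 - 1$ offset in the definition of $\Gamma'_n$ must be matched against the initialization $k'_0 = \lambda_1 + 1$ in the definition of $S'$, and one must be careful that the strict inequality $g^{k}(1) < g^{k''_{n-1}}(1)$ picks out the next best approximation on the correct side rather than an intermediate point. Once the rotation interpretation is made precise and the indexing is pinned down, the inductive step becomes essentially a restatement of the best-approximation property of semiconvergents, so the technical heart of the proof is really the careful bookkeeping rather than any new dynamical input.
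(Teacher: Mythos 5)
Your overall strategy is sound and, at its core, the same as the paper's: reduce via Lemma \ref{n1} to showing $S'=\Gamma'$ and $S''=\Gamma''$, get the intercalation statement for free, and drive the induction by the one-sided best-approximation property of the semiconvergents. (The paper never makes the rotation picture explicit; it instead runs a double induction over blocks indexed by $v(m,n)$, using Lemma \ref{dlemma} to propagate orbit segments of $g$, but the number-theoretic input is the same.) However, your dictionary between $g$ and a circle rotation is set up incorrectly, and this breaks the stated induction. The map $g$ adds $\lambda$ or subtracts $1$, and since $-1\equiv\lambda \pmod{1+\lambda}$, it is a rotation by $\lambda$ on a circle of circumference $1+\lambda$: one has $g^k(1)\equiv 1+k\lambda \pmod{1+\lambda}$, not $\pmod 1$. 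Your criterion ``$g^k(1)>1$ iff $\{k\lambda\}\in(0,\lambda]$'' already fails for $\lambda=\Phi$, $k=2$: there $\{2\lambda\}\approx 0.236\in(0,\lambda]$ but $g^2(1)=\Phi<1$. Consequently the key inductive identification ``$k''_n=q''_n$'' is false: the $g$-time of the $q$-th return to $[0,1]$ exceeds $q$ by the number of visits to $(1,1+\lambda]$, so the correct statement is $k''_n=p''_n+q''_n$ and $k'_n=p'_{n+\lambda_1-1}+q'_{n+\lambda_1-1}$. This is exactly what the paper's bookkeeping records, e.g.\ the initializations $k'_0=\lambda_1+1$, $k''_1=\lambda_1+2$ and the block times $K''(n)=p_{m+2}+q_{m+2}+n(p_{m+3}+q_{m+3})$, and it also matches Theorem \ref{n9}.

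The gap is repairable: either work modulo $1+\lambda$ throughout, or pass to the first-return map of $g$ to $[0,1]$, which genuinely is the rotation $x\mapsto x+\lambda \pmod 1$, and keep separate track of return indices (which match the denominators $q$) versus $g$-times (which equal $p+q$ at the semiconvergent approaches). With that correction, your identification of $s''_n$ and $s'_n$ as the successive one-sided closest approaches of the orbit to the point $1$ is correct, the inductive step is indeed just the best-approximation property of lower/upper semiconvergents (together with the recursion \eqref{CF} to pass between blocks), and the argument becomes a somewhat more conceptual packaging of the paper's proof. But as written, the mod-$1$ congruence and the claim $k''_n=q''_n$ are concrete false steps on which your induction relies, so the proposal does not yet constitute a proof.
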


\begin{proof}
	Let $v:\N \times \N \rightarrow \N$ be given by
	\begin{equation*}\label{eqv'}
	v(m,n)=\left\{\begin{array}{ll}
	\vspace{0.2cm}
	n, & m\leq 1,\\
	\vspace{0.2cm}
	\lambda_2+\lambda_4+...+\lambda_{m}+n, & m > 0 \ \textrm{and} \ m \ \textrm{is\ even},\\
	\lambda_3+\lambda_5+...+\lambda_{m}+n, & m > 1 \ \textrm{and} \ m \ \textrm{is\ odd}.
	\end{array}\right.
	\end{equation*}
	for $m, n \in \N$.
	
	Since $\lambda$ is irrational, this shows that we have
	$$\bigcup_{m \geq 0 \ \textrm{even}} \left\{ v(m,n) \right \}_{0\leq n\leq \lambda_{m+2}}=  \bigcup_{m \geq 1 \ \textrm{odd}} \left\{ v(m,n) \right \}_{0\leq n\leq \lambda_{m+2}} = \N.$$
From this we get that $\Gamma''_k=\Lambda''_k$ and $\Gamma'_k=\Lambda'_k$ for all $k \in \N$ if and only if for all even $m \geq 0$
	\begin{equation*}\label{np2}
	\Gamma''_{v(m,n)}=\Lambda''_{v(m,n)},
	\end{equation*}
	for $0\leq n\leq \lambda_{m+2}$, and
	\begin{equation*}\label{np3}
	\Gamma'_{v(m+1,n)}=\Lambda'_{v(m+1,n)},
	\end{equation*}
	for $0\leq n\leq \lambda_{m+3}$.

	It is well known  (see for instance \cite{Khi}) that $[0,\lambda_1,...,\lambda_m]= (n p_{m}+p_{m-1})/(n q_{m}+q_{m-1}),$
for all $m,n \in \N$. From this it follows that for all even $m\geq0$ we have
	\begin{equation*}\label{np4}
	\Gamma''_{v(m,n)}=(q_{m}\lambda-p_{m})-n(p_{m+1}-q_{m+1}\lambda),
	\end{equation*}
	for $0\leq n\leq \lambda_{m+2}$, and
	\begin{equation*}\label{np5}
	\Gamma'_{v(m+1,n)}=(p_{m+1}-q_{m+1}\lambda)-n(q_{m+2}\lambda-p_{m+2}),
	\end{equation*}
	for $0\leq n\leq \lambda_{m+3}$.
	Combining the four expressions above it follows that $\Gamma''_k=\Lambda''_k$ and $\Gamma'_k=\Lambda'_k$ for all $k \in \N$ if and only if for all even $m\geq 0$ we have
	\begin{equation}\label{np6}
	\Lambda''_{v(m,n)}=(q_{m}\lambda-p_{m})-n(p_{m+1}-q_{m+1}\lambda),
	\end{equation}
	for $0\leq n\leq \lambda_{m+2}$, and
	\begin{equation}\label{np7}
	\Lambda'_{v(m+1,n)}=(p_{m+1}-q_{m+1}\lambda)-n(q_{m+2}\lambda-p_{m+2}),
	\end{equation}
	for $0\leq n\leq \lambda_{m+3}$.\\
	
	
	We now prove, by induction on $m$, that \eqref{np6} and \eqref{np7}  hold for all even $m\geq0$.
	Before, we prove by induction on $n$, that 
	\begin{equation}\label{np8}
	\Lambda''_{v(0,n)}=(q_{0}\lambda-p_{0})-n(p_{1}-q_{1}\lambda),
	\end{equation}
	for $0\leq n\leq \lambda_{2}$.
	
	We have $v(0,0)=0$, thus $\Lambda''_{v(0,0)}=\Lambda''_0$. Since $s''_0=\lambda$ and $(p_0,q_0)=(0,1)$, by Lemma \ref{n1} we have $\Lambda''_0=q_{0}\lambda-p_{0}$. Thus, 
	\eqref{np8} holds for $n=0$. 
	Now fix $n< \lambda_{m+2}$. We assume \eqref{np8} holds for $n$ and prove it for $n+1$ instead.
	
	We have that \eqref{np8} is equivalent to
$g^{1+n(1+\lambda_1)}(1)-1=\lambda-n(1-\lambda_1 \lambda),$
	therefore since we are assuming \eqref{np8} holds for $n$  we get
	\begin{equation}\label{np9}
	g^{1+(n+1)(1+\lambda_1)}(1)-1=g(g^{\lambda_1}(1+\Lambda''_{v(0,n)})).
	\end{equation}

	Recall the definition of $d^-$. We have
$d^-(N)=1-\max_{1\leq k \leq N}\left\{g^k(1)\leq 1\right\},$
	for any $N \geq 2$. Note that 
	$d^-(\lambda_1)=1-(\lambda_1-1)\lambda,$
	therefore $\Lambda_{v(0,n)}''\leq d^-(\lambda_1)$.

	Assume now that $\lambda_1\geq 2$. Applying Lemma \ref{dlemma} with $\ell=\Lambda''_{v(0,n)}$ and $N=\lambda_1$ we get
	$g^{\lambda_1}(1+\Lambda''_{v(0,n)})=g^{\lambda_1}(1)+\Lambda''_{v(0,n)}.$
	Since $1-g^{\lambda_1+1}(1)=s_0'$, $s_0'=1-\lambda_1 \lambda$ and $\lambda<1$ we have
	$g^{\lambda_1}(1)=(\lambda_1-1)\lambda.$
	Combining this we get
	\begin{equation*}\label{np12}
	g^{\lambda_1}(1+\Lambda''_{v(0,n)})=1-\lambda + (\lambda_1-(n+1)(1-\lambda_1 \lambda)),
	\end{equation*}
	which combined with \eqref{np9} gives
	\begin{equation}\label{np13}
	g^{1+(n+1)(1+\lambda_1)}(1)=1+(\lambda_1-(n+1)(1-\lambda_1 \lambda)),
	\end{equation}
	which is smaller than $1+\Lambda''_{v(0,n)}$.

	If $\lambda_1=1$ it is clear from \eqref{np9} that we get \eqref{np13} as well.
	Since $\Gamma''$ is the sequence of best rational approximations of $\lambda$ by below and we have $\Lambda_{v(0,n)}''=\Gamma_{v(0,n)}''$ and $g^{1+(n+1)(1+\lambda_1)}(1)-1=\Gamma_{v(0,n)+1}''$, we must have
	$$ 1+(n+1)(1+\lambda_1) = \min\{k\geq 1 : 1<g^{k}(1)<g^{1+n(1+\lambda_1)}(1)\}, $$
	and thus by Lemma \ref{n1} and \eqref{np13} we have	
	$\Lambda_{v(0,n+1)}''=(q_{0}\lambda-p_{0})-(n+1)(p_{1}-q_{1}\lambda).$
	This completes the proof that \eqref{np8} holds for $0 \leq n \leq \lambda_{m+2}$.\\
	
	
	In a similar way, it can be proved that
	\begin{equation*}\label{np14}
	\Lambda'_{v(1,n)}=(p_{1}-q_{1}\lambda)-n(q_{2}\lambda-p_{2}),
	\end{equation*}
	for all $0\leq n\leq \lambda_{3}$, so we omit the proof.
	

	We now assume that for $0\leq n \leq \lambda_{m+2}$, we have
	\begin{equation}\label{Lr'}
	\Lambda_{v(m,n)}''=(q_m \lambda - p_m)-n(p_{m+1}-q_{m+1}\lambda),
	\end{equation}
	and that for $0\leq n \leq \lambda_{m+3}$, we have
	\begin{equation}\label{Lr''}
	\Lambda_{v(m+1,n)}'=(p_{m+1}-q_{m+1} \lambda)-n(q_{m+2}\lambda-p_{m+2}).
	\end{equation}
	and prove that we have 
	\begin{equation}\label{Lr}
	\Lambda_{v(m+2,n)}''=(q_{m+2} \lambda-p_{m+2})-n(p_{m+3}-q_{m+3}\lambda),
	\end{equation}
	for all $0\leq n\leq \lambda_{m+4}$, and 
	\begin{equation}\label{Lr''2}
	\Lambda_{v(m+3,n)}'=(p_{m+3}-q_{m+3} \lambda)-n(q_{m+4}\lambda-p_{m+4}).
	\end{equation}
	for all $0\leq n\leq \lambda_{m+5}$.

First we prove \eqref{Lr}, by induction on $n$, for all $n\leq \lambda_{m+4}$ .

Since $v(m+2,0)=v(m,\lambda_{m+2})$,  by \eqref{Lr'} and \eqref{CF}, we get
$\Lambda_{v(m+2,0)}''=q_{m+2} \lambda - p_{m+2}.$ Thus, \eqref{Lr} holds for $n=0$.

Fix $n< \lambda_{m+4}$. We assume that \eqref{Lr} holds for $n$ and prove it for $n+1$ instead.
Recall the definition of $\Lambda''$. With
$K''(n)=p_{m+2}+q_{m+2}+n(p_{m+3}+q_{m+3}),$
we have that \eqref{Lr} is equivalent to
$g^{K''(n)}(1)-1=q_{m+2}\lambda-p_{m+2}-n(p_{m+3}-q_{m+3}\lambda),$
and combining these  we get
\begin{equation}\label{new2b}
g^{K''(n+1)}(1)=g(g^{(p_{m+3}+q_{m+3}-1)}(1+\Lambda''_{v(m+2,n)})).
\end{equation}

From \eqref{Lr''} we get
\begin{equation*}\label{new3b}
\Lambda_{v(m+1,n)}'=\Gamma_{v(m+1,n)}',
\end{equation*}
for $0\leq n\leq \lambda_{m+3}$. It follows from this identity and fact that the upper semi-convergents of $\lambda$ are its best rational approximations by above, that $v(m+1,\lambda_{m+3}-1)$ is the largest integer such that $k'_{v(m+1,\lambda_{m+3}-1)}<q_{m+3}+p_{m+3}$. 
	Recall the definition of $d^-$. We have
$d^-(N)=1-\max_{1\leq k \leq N}\left\{g^k(1)\leq 1\right\},$
	for any $N \geq 2$, thus
$d^-(q_{m+3}+p_{m+3}-1)=s'_{\lambda_{m+3}-1},$
and by Lemma \ref{n1}, \eqref{Lr''} and \eqref{CF} we get
\begin{equation*}\label{new5b}
d^-(q_{m+3}+p_{m+3}-1)=p_{m+3}-q_{m+3}\lambda + q_{m+2}\lambda-p_{m+2}.
\end{equation*}

Therefore $\Lambda_{v(m+2,n)}''< d^-(q_{m+3}+p_{m+3}-1)$. Applying Lemma \ref{dlemma} with $\ell=\Lambda''_{v(m+2,n)}$ and $N=p_{m+3}+q_{m+3}-1$ yields
\begin{equation}\label{new6b}
g^{p_{m+3}+q_{m+3}-1}(1+\Lambda_{v(m+2,n)}'')=g^{p_{m+3}+q_{m+3}-1}(1)+\Lambda_{v(m+2,n)}''.
\end{equation}

By Lemma \ref{n1}, \eqref{Lr''} and \eqref{CF} we have
$1-g^{p_{m+3}+q_{m+3}}(1)=p_{m+3}-q_{m+3}\lambda,$
and since  $\lambda<1$,  we get
\begin{equation*}\label{new8b}
g^{p_{m+3}+q_{m+3}}(1)=1-\lambda-(p_{m+3}-q_{m+3}\lambda).
\end{equation*}
Combining this identity with \eqref{new2b} and \eqref{new6b} we have
\begin{equation*}\label{new9b}
g^{K''(n+1)}(1)=g(1-\lambda-(p_{m+3}-q_{m+3}\lambda)+\Lambda''_{v(m+2,n)}),
\end{equation*}
and since \eqref{Lr} holds for $n$ we get
\begin{equation}\label{new10b}
g^{K''(n+1)}(1)=1+(q_{m+2}\lambda-p_{m+2})-(n+1)(p_{m+3}-q_{m+3}\lambda),
\end{equation}
which is smaller than $1+\Lambda''_{v(m+2,n)}$.

Since $\Gamma''$ is the sequence of best rational approximations of $\lambda$ by below and we have $\Lambda_{v(m+2,n)}''=\Gamma_{v(m+2,n)}''$ and $g^{K(n+1)}(1)-1=\Gamma_{v(m+2,n)+1}''$, we must have
$$ K''(n+1) = \min\{k\geq 1 : 1<g^{k}(1)<g^{K''(n)}(1)\}, $$
and thus by Lemma \ref{n1} and \eqref{new10b} we have
\begin{equation*}
\Lambda_{v(m+2,n+1)}''=(q_{m+2}\lambda-p_{m+2})-(n+1)(p_{m+3}-q_{m+3}\lambda).
\end{equation*}
This completes the proof that \eqref{Lr} holds for $0 \leq n \leq \lambda_{m+4}$.\\


We now prove \eqref{Lr''2} by induction on $n$, for all $n\leq \lambda_{m+5}$ .

Since $v(m+3,0)=v(m+1,\lambda_{m+3})$,  by \eqref{CF} and \eqref{Lr''}, we get
$\Lambda_{v(m+3,0)}' =p_{m+3}-q_{m+3} \lambda$ and so \eqref{Lr''2} holds for $n=0$. 

Fix $n< \lambda_{m+5}$. We assume that \eqref{Lr''2} holds for $n$ and prove it for $n+1$ instead.

Recall the definition of $\Lambda'$. With
$K'(n)=p_{m+3}+q_{m+3}+n(p_{m+4}+q_{m+4}),$
we have that \eqref{Lr''2} is equivalent to
$1-g^{K'(n)}(1)=p_{m+3}-q_{m+3}\lambda-n(q_{m+4}\lambda-p_{m+4}),$
and  we get
\begin{equation}\label{new2}
g^{K'(n+1)}(1)=g(g^{(p_{m+4}+q_{m+4}-1)}(1-\Lambda'_{v(m+3,n)})).
\end{equation}
From \eqref{Lr} we get
$
\Lambda_{v(m+2,n)}''=\Gamma_{v(m+2,n)}'',
$ 
for $0\leq n\leq \lambda_{m+4}$. 
It follows from this identity and from the fact that the lower semi-convergents of $\lambda$ are its best rational approximations by below, that $v(m+2,\lambda_{m+4}-1)$ is the largest integer such that $k''_{v(m+2,\lambda_{m+4}-1)}<q_{m+4}+p_{m+4}$.

Recall the definition of $d^+$. We have
$d^+(N)=\min_{1\leq k \leq N}\left\{g^k(1)\geq 1\right\}-1,$
for any $N \geq \lambda_1+1$. Thus, $d^+(q_{m+4}+p_{m+4}-1)=s''_{\lambda_{m+4}-1}.$
By Lemma \ref{n1}, \eqref{Lr} and \eqref{CF} we get
$d^+(q_{m+4}+p_{m+4}-1)=q_{m+4}\lambda-p_{m+4} + p_{m+3}-q_{m+3}\lambda.$
Therefore $\Lambda_{v(m+3,n)}'< d^+(q_{m+4}+p_{m+4}-1)$. Applying Lemma \ref{dlemma} with $\ell=1-\Lambda'_{v(m+3,n)}$ and $N=p_{m+4}+q_{m+4}-1$ we get
\begin{equation*}\label{new6}
g^{p_{m+4}+q_{m+4}-1}(1-\Lambda_{v(m+3,n)}')=g^{p_{m+4}+q_{m+4}-1}(1)-\Lambda_{v(m+3,n)}'.
\end{equation*}

By Lemma \ref{n1}, \eqref{Lr} and \eqref{CF} we have
$g^{p_{m+4}+q_{m+4}}(1)-1=q_{m+4}\lambda-p_{m+4},$
and since $\lambda<1$ we get
\begin{equation*}\label{new8}
g^{p_{m+4}+q_{m+4}}(1)=1-\lambda+(q_{m+4}\lambda-p_{m+4}).
\end{equation*}

Combining the two above identities with \eqref{new2} and since \eqref{Lr''2} holds for $n$ we get
\begin{equation}\label{new10}
g^{K'(n+1)}(1)=1-\left[(p_{m+3}-q_{m+3 \lambda})-(n+1)(q_{m+4}\lambda-p_{m+4})\right],
\end{equation}
which is larger than $1-\Lambda'_{v(m+3,n)}$.

Since $\Gamma'$ is the sequence of best rational approximations of $\lambda$ by above and we have $\Lambda_{v(m+3,n)}'=\Gamma_{v(m+3,n)}'$ and $1-g^{K'(n+1)}(1)=\Gamma_{v(m+3,n)+1}'$, we must have
$$ K'(n+1) = \min\{k\geq 1 : g^{K'(n)}(1)<g^{k}(1)<1\}, $$
and thus by Lemma \ref{n1} and \eqref{new10} we have
\begin{equation*}
\Lambda_{v(m+3,n+1)}'=(p_{m+3}-q_{m+3 \lambda})-(n+1)(q_{m+4}\lambda-p_{m+4}).
\end{equation*}
This completes the proof that \eqref{Lr''2} holds for $0 \leq n \leq \lambda_{m+5}$.


This proves \eqref{np6} and \eqref{np7} and therefore $\Lambda '$ is equal to the sequence $\Gamma '$ and $\Lambda ''$ is equal to the sequence $\Gamma ''$. By definition of $\Lambda$ and $\Gamma $, this implies that $\Gamma =\Lambda$ as well. This finishes our proof.

\end{proof}

Recall our definitions of first hitting time $n_{\ell}(x)$ of $x$ to $\overline{I_c(\ell)}$ and the first hitting map $r_{\ell}$. We want now to relate these to $\Gamma'$ and $\Gamma''$. This is done in the next theorem.

\begin{theorem}\label{n9}
 Let $0<\ell\leq \lambda$ and let $n_1, n_2 \in \N$ be such that $\Gamma'_{n_1+1}\leq \ell < \Gamma'_{n_1} $  and  $\Gamma''_{n_2+1}\leq \ell < \Gamma''_{n_2} $. Then
 $r_{\ell}(1+\ell)=1+\ell-\Gamma'_{n_1+1}$ and $r_{\ell}(1)=1+\Gamma''_{n_2+1}.$ Furthermore $n_{\ell}(1+\ell)=k_{n_1+1}'$ and $n_{\ell}(1)=k_{n_2+1}''$.
\end{theorem}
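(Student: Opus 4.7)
The plan is to convert the sequences $\Gamma'_n,\Gamma''_n$ into dynamically defined quantities and then track the $g$-orbits of $1+\ell$ and of $1$ step by step. By Theorem \ref{n7} together with Lemma \ref{n1} one has $\Gamma'_n=\Lambda'_n=s'_n$ and $\Gamma''_n=\Lambda''_n=s''_n$, so the hypotheses become $s'_{n_1+1}\leq \ell<s'_{n_1}$ and $s''_{n_2+1}\leq \ell<s''_{n_2}$. The two conclusions decouple and are handled separately.

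For $r_\ell(1+\ell)$, I would follow the orbit of $1+\ell$ under the gap-free map $g$, which coincides with $g_\ell$ as long as the orbit stays out of the open middle interval $(1,1+\ell)$. Since $s'_n$ is strictly decreasing and $g^{k'_n}(1)=1-s'_n$ is, by construction of $k'_n$, the largest iterate of $1$ at most $1$ among times $\leq k'_{n_1+1}-1$, the definition of $d^-$ gives $d^-(k'_{n_1+1}-1)=s'_{n_1}$. The hypothesis $\ell<s'_{n_1}$ then allows Lemma \ref{dlemma}(ii) with $N=k'_{n_1+1}-1$, yielding $g^n(1+\ell)=g^n(1)+\ell$ for $2\leq n\leq k'_{n_1+1}-1$. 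An explicit check at $n=1$ gives $g(1+\ell)=\ell\in(0,1)$. Together these show that for $1\leq n\leq k'_{n_1+1}-1$ the orbit avoids $[1,1+\ell]$: if $g^n(1)>1$ then $g^n(1)+\ell>1+\ell$, while if $g^n(1)\leq 1$ then $g^n(1)\leq 1-s'_{n_1}$, so $g^n(1)+\ell<1$.

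To land the final step, use the identity $g^{k'_{n_1+1}-1}(1)=1-\lambda-s'_{n_1+1}$, which is the unique pre-image in $[0,1+\lambda]$ of $1-s'_{n_1+1}=g^{k'_{n_1+1}}(1)$ because $\lambda<1$ rules out the other candidate $2-s'_{n_1+1}$. Since $\ell\leq\lambda$ and $s'_{n_1+1}\leq\ell$, one has $1-\lambda-s'_{n_1+1}+\ell\in[0,1)$, hence
\[
g^{k'_{n_1+1}}(1+\ell)=g\bigl(1-\lambda-s'_{n_1+1}+\ell\bigr)=1+\ell-s'_{n_1+1}\in[1,1+\ell].
\]
This gives $n_\ell(1+\ell)=k'_{n_1+1}$ and $r_\ell(1+\ell)=1+\ell-\Gamma'_{n_1+1}$.

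The identity for $r_\ell(1)$ is simpler: I only need to track $g^n(1)$ directly. The minimality built into the definition of $k''_{n_2+1}$ implies that for $1\leq n<k''_{n_2+1}$ one has $g^n(1)\leq 1$ or $g^n(1)\geq 1+s''_{n_2}$; combining with $\ell<s''_{n_2}$ and the irrationality of $\lambda$ (which rules out $g^n(1)=1$) yields $g^n(1)\notin[1,1+\ell]$, so $g_\ell^n(1)=g^n(1)$ in this range. At $n=k''_{n_2+1}$, the definition $s''_{n_2+1}=g^{k''_{n_2+1}}(1)-1$ together with $s''_{n_2+1}\leq\ell$ places $g^{k''_{n_2+1}}(1)=1+s''_{n_2+1}$ inside $[1,1+\ell]$. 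Hence $n_\ell(1)=k''_{n_2+1}$ and $r_\ell(1)=1+\Gamma''_{n_2+1}$. The main obstacle is certifying $d^-(k'_{n_1+1}-1)=s'_{n_1}$ and the corresponding avoidance claim for the orbit of $1+\ell$; once this is in hand, after which everything reduces to a direct application of $g$ using the explicit value of $g^{k'_{n_1+1}-1}(1)$.
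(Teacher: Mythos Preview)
Your proof is correct and follows essentially the same route as the paper: both identify $\Gamma'_n=s'_n$ and $\Gamma''_n=s''_n$ via Theorem~\ref{n7} and Lemma~\ref{n1}, invoke Lemma~\ref{dlemma} with the bound $d^-(k'_{n_1+1}-1)=s'_{n_1}$ to propagate $g^n(1+\ell)=g^n(1)+\ell$, and finish using the preimage identity $g^{k'_{n_1+1}-1}(1)=1-\lambda-s'_{n_1+1}$. The only organizational difference is that the paper first pins down the hitting time at the endpoint $\ell=\Gamma'_{n_1+1}$ and then (implicitly via the bifurcation structure of Theorem~\ref{prop:1}) carries it over to all $\ell$ in the interval, whereas you verify $n_\ell(1+\ell)=k'_{n_1+1}$ directly by checking the orbit avoids $[1,1+\ell]$ at every intermediate step; your version is a little more self-contained.
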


\begin{proof}
	We prove only that $r_{\ell}(1+\ell)=1+\ell-\Gamma'_{n_1+1}$. As the proof for the other case is similar, we omit it.
	
	By Theorem \ref{n7}, we have $\Gamma'=\Lambda'$, therefore $\Gamma'_{n_1+1}\leq \ell < \Gamma'_{n_1} $ implies that $\Lambda'_{n_1+1}\leq \ell < \Lambda'_{n_1} $. Also, combining Theorem \ref{n7} with Lemma \ref{n1}, we have  $S'=\Gamma'$ and we get
	\begin{equation}\label{n9e1}
	g^{k_{n_1+1}'}(1)=1-\Gamma'_{n_1+1}.
	\end{equation}
	As $k_{n_1+1}> 2$ and $\Gamma'_{n_1+1}\leq d^{-}(k_{n_1+1}')$,  applying Lemma \ref{dlemma} we get
	\begin{equation*}\label{n9e2}
	g^{k_{n_1+1}'}(1+\Gamma'_{n_1+1})=g^{k_{n_1+1}'}(1)+\Gamma'_{n_1+1}.
	\end{equation*}
	From these two identities we get $g^{k_{n_1+1}'}(1+\Gamma_{n_1+1}')=1$, thus $n_{\Gamma_{n_1+1}'}(1+\Gamma_{n_1+1}')=k_{n_1+1}'.$ Therefore $r_{\ell}(1+\ell)=g^{k_{n_1+1}'}(1+\ell).$
	
	As $\ell < d^-(k_{n_1+1}-1)$ by Lemma \ref{dlemma},
	$g^{k_{n_1+1}'-1}(1+\ell)=g^{k_{n_1+1}'-1}(1)+\ell.$
	Applying $g$ on both sides  and combining with \eqref{n9e1}, we get $g^{k_{n_1+1}'}(1+\ell)=1+\ell-\Gamma_{n_1+1}'.$ This finishes our proof.
\end{proof}

\section{Dynamical sequences}\label{Dynamical sequences}

In this section, we introduce the {\it dynamical sequences} $\{y_n \}_n$ and $\{p_n \}_n$. These will be an important tool in order to prove our main theorems and will be later related to the dynamics of our family of maps. We show inductive formulas to compute these sequences and prove that for some choice of parameters  $\{p_n \}_n$ is periodic with period at most $2$.
 
Let
\begin{equation}\label{ren121}
\ell(y)=\dfrac{2y}{\nu},
\end{equation}
and denote
\begin{equation}\label{CandD}
C= C(\mu,\nu) =\dfrac{2 \mu}{\mu+\nu}, \quad D= D(\mu,\nu)=\dfrac{2 \mu}{\mu - \nu}.
\end{equation}
We now inductively define our sequences $\{y_{n}\}_{n \in \N}$, $\{p_{n}\}_{n \in \N}$ and $\{\kappa_{n}\}_{n \in \N}$  depending on the parameters $\nu>0$, $|\mu|>\nu$, $\lambda \in (0,1)\backslash\mathbb{Q}$ and $0<\eta<\lambda$. We will denote these sequences  by  $\{y_{n}(\mu)\}_{n \in \N}$, $\{p_{n}(\mu)\}_{n \in \N}$ and $\{\kappa_{n}(\mu)\}_{n \in \N}$ when it is important to stress the dependence on the parameter $\mu$.

Set
\begin{equation}\label{ren66}
y_0=\eta\frac{\mu \nu}{\mu+\nu}.
\end{equation}
Note that as $\eta>0$ we have $y_0>0$.  Since $\lambda$ is irrational, $\{ \Gamma_n''  \}$ (see Section \ref{Bifurcation sequence}) is an infinite sequence and converges to $0$. Furthermore by the definitions of $\ell$ and $y_0$   we have $\ell(y_0)>0$. Thus there exists a smallest natural number $\kappa_0$ such that $\Gamma''_{\kappa_0}<\ell(y_0)$.
Set $\Upsilon_0=\Gamma''_{\kappa_0}$ and 
\begin{equation*}\label{ren3}
	p_0=\dfrac{\Upsilon_0}{\ell(y_0)}.
\end{equation*}

For $n \geq 0$ assume we defined $y_n$, $p_n$, $\kappa_n$ and $\Upsilon_n$ and that at least one of the conditions $y_n>0$ or $p_n =1/C$ holds. Set
 \begin{equation}\label{fdren5}
y_{n+1}= \left\{\begin{array}{ll}
\vspace{0.2cm}
(1-C p_{n})y_{n}, & \textrm{if} \ p_{n}<1/C, \\
\vspace{0.2cm}
(1-D(1-p_{n}))y_{n}, &  \textrm{if} \ p_{n}>1/C, \\
0, & \textrm{if} \ p_{n}=1/C .
\end{array}\right. 
\end{equation}

Since $\lambda$ is irrational, $\{ \Gamma_n'  \}$ and $\{ \Gamma_n''  \}$ (see Section \ref{Bifurcation sequence}) are infinite sequences and converge to $0$, furthermore if $p_{n}\neq 1/C$, by \eqref{CandD} and \eqref{fdren5}  we have $\ell(y_{n+1})>0$, thus there are integers $k'$ and $k''$ such that  $\Gamma_k'<\ell(y_{n+1})$ and $\Gamma_k''<\ell(y_{n+1})$ respectively. We set
\begin{equation*}\label{fdren5a}
\kappa_{n+1}= \left\{\begin{array}{ll}
\vspace{0.2cm}
\min\{ k \in \N: \Gamma_k''<\ell(y_{n+1})  \}, & \textrm{if} \ p_{n}<1/C, \\
\vspace{0.2cm}
\min\{ k \in \N: \Gamma_k'<\ell(y_{n+1})  \}, &  \textrm{if} \ p_{n}>1/C, \\
\kappa_{n}, & \textrm{if} \ p_{n}=1/C .
\end{array}\right. 
\end{equation*}
If $p_n \leq 1/C$ set $\Upsilon_{n+1}=\Gamma''_{\kappa_{n+1}}$, else if $p_n > 1/C$ set
\begin{equation}\label{ren1}
\Upsilon_{n+1}= \left\{\begin{array}{ll}
\vspace{0.2cm}
1, & \textrm{if} \ \ell(y_{n+1})>1, \\
\vspace{0.2cm}
1-\left( 1+ \left[ \frac{1-\ell(y_{n+1})}{\lambda}  \right]     \right)\lambda, &  \textrm{if} \ \Gamma_0'< \ell(y_{n+1}) \leq 1, \\
\Gamma_{\kappa_{n+1}}', & \textrm{if} \ \ell(y_{n+1})< \Gamma_0',
\end{array}\right. 
\end{equation}
where $\left[ \cdot  \right]$, denotes the integer part of a real number. Finally set
\begin{equation*}\label{fd4}
p_{n+1}= \left\{\begin{array}{ll}
\vspace{0.2cm}
\dfrac{	\Upsilon_{n+1}}{\ell(y_{n+1})}, & \textrm{if} \ p_{n}<1/C, \\
\vspace{0.2cm}
1-\dfrac{	\Upsilon_{n+1}}{\ell(y_{n+1})}, & \textrm{if} \ p_{n}>1/C, \\
0, & \textrm{if} \ p_{n}=1/C.
\end{array}\right. 
\end{equation*}

The following lemma characterizes the sequence $\{y_n\}_{n \in \mathbf{N}}$.

\begin{lemma}\label{lemmayn}
Given $\nu>0$, $|\mu|>\nu$, $\lambda \in (0,1)\backslash\mathbb{Q}$ and $0<\eta<\lambda$, the sequence $\{y_n\}_{n \in \mathbf{N}}$ with $\mathbf{N}=\{n \in \N:  y_n > 0  \}$ is strictly decreasing and it is either finite or converges to $0$. 
\end{lemma}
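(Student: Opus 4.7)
My plan is to establish strict monotonicity on $\mathbf{N}$ first, and then, when the sequence is infinite, to force the limit to zero by exploiting the tail behaviour $\Gamma'_k,\Gamma''_k\to 0$.

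For strict monotonicity, I fix $n$ with $y_n,y_{n+1}>0$; necessarily $p_n\neq 1/C$, since $p_n=1/C$ would give $y_{n+1}=0$ in \eqref{fdren5}. I claim $\Upsilon_n\in(0,\ell(y_n))$ in every one of the cases defining $\Upsilon_n$. Because $\lambda$ is irrational, every $\Gamma'_k$ and $\Gamma''_k$ is strictly positive, so whenever $\Upsilon_n=\Gamma''_{\kappa_n}$ or $\Upsilon_n=\Gamma'_{\kappa_n}$, positivity is automatic and $\Upsilon_n<\ell(y_n)$ follows directly from the minimality of $\kappa_n$. For the middle sub-case of \eqref{ren1}, the fact that $\ell(y_n)\in(\Gamma'_0,1]$ together with $\Gamma'_0=1-\lambda_1\lambda$ (where $\lambda_1$ is the first partial quotient of $\lambda$) gives $(1-\ell(y_n))/\lambda\in[0,\lambda_1)$, so $[(1-\ell(y_n))/\lambda]\le\lambda_1-1$ and hence $\Upsilon_n=1-(1+[(1-\ell(y_n))/\lambda])\lambda>0$; the inequality $\Upsilon_n<\ell(y_n)$ reduces to the standard $\lfloor x\rfloor<x+1$. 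In particular $p_n\in(0,1)$ regardless of whether the preceding step was in the $<1/C$ or $>1/C$ case.

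With $p_n\in(0,1)$, the two surviving cases of \eqref{fdren5} yield multipliers in $(0,1)$: if $p_n<1/C$ then $Cp_n\in(0,1)$ and so $1-Cp_n\in(0,1)$; if $p_n>1/C$ then $D(1-p_n)>0$ gives the multiplier strictly less than $1$, while the hypothesis $y_{n+1}>0$ forces $D(1-p_n)<1$. Hence $y_{n+1}/y_n\in(0,1)$, proving the subsequence indexed by $\mathbf{N}$ is strictly decreasing. For the dichotomy, if $\mathbf{N}$ is finite there is nothing more to show. Otherwise, strict monotonicity makes $\{y_n\}$ a positive, strictly decreasing sequence that converges to some $y^*\ge 0$. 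Suppose for contradiction $y^*>0$, so $\ell(y_n)\to\ell(y^*)>0$. Since $\Gamma'_k\to 0$ and $\Gamma''_k\to 0$, I can pick $K\in\N$ with $\max(\Gamma'_K,\Gamma''_K)<\ell(y^*)/2$. For all large $n$, $\ell(y_n)>\ell(y^*)/2$, so the minimality in the definition of $\kappa_n$ forces $\kappa_n\le K$ and therefore $\Upsilon_n\ge c:=\min\{\Gamma'_k,\Gamma''_k:0\le k\le K\}>0$. Consequently either $Cp_n$ or $D(1-p_n)$ is bounded below by a positive constant uniform in $n$, making the multipliers bounded uniformly away from $1$. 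This contradicts $y_{n+1}/y_n\to 1$ (which is forced by $y_n\to y^*>0$), so $y^*=0$.

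The main obstacle in the plan lies in Step~1: the case analysis showing $\Upsilon_n\in(0,\ell(y_n))$ through the three sub-cases of \eqref{ren1}, together with the verification that in the $p_n>1/C$ branch the positivity $y_{n+1}>0$ is equivalent to $D(1-p_n)<1$. Once this is secured, the rest is essentially a bounded-monotone convergence argument combined with the fact that the error sequences $\Gamma'$ and $\Gamma''$ tend to zero.
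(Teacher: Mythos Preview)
Your monotonicity step is fine. The convergence step has a genuine gap at the sentence ``Consequently either $Cp_n$ or $D(1-p_n)$ is bounded below by a positive constant uniform in $n$.'' Your lower bound $\Upsilon_n\ge c$ yields $q_n:=\Upsilon_n/\ell(y_n)\ge c'>0$, but the multiplier at step $n$ depends on whether $p_n$ lies below or above $1/C$, whereas the identity $p_n=q_n$ versus $p_n=1-q_n$ depends on whether $p_{n-1}$ does. When $p_{n-1}$ and $p_n$ lie on \emph{opposite} sides of $1/C$ --- which genuinely occurs, for instance in the period-$2$ regime of Theorem~\ref{rencor} --- a lower bound on $q_n$ controls the wrong quantity: if $p_{n-1}>1/C$ and $p_n<1/C$ then you need $p_n=1-q_n$ bounded away from $0$, i.e.\ $q_n$ bounded away from $1$, and nothing you have written rules out $\Upsilon_n\to\ell(y^*)$.

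Closing this requires a second appeal to the irrationality of $\lambda$. Since $\Upsilon_n$ takes only finitely many values once $\kappa_n$ is bounded, the scenario $q_n\to 1$ forces $\ell(y^*)$ to equal one of those values; chasing the recursion back one more index then forces $\ell(y^*)$ to be simultaneously a $\Gamma''$-value and either a $\Gamma'$-value or one of the numbers $1-m\lambda$ from \eqref{ren1}, and equating an expression $q''\lambda-p''$ with one of the form $p'-q'\lambda$ makes $\lambda$ rational. The paper's own proof proceeds along similar subsequence lines, deriving the contradiction from the fact that a stabilised $\kappa_{n(l)}$ would force a $\Gamma''$ term to vanish; your outline needs this extra ingredient to be complete.
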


\begin{proof}
If $p_n <1/C$ or $p_n >1/C$ then $(1-C p_n) \in (0,1)$ or $(1-D ( 1-p_n)) \in (0,1)$, respectively, and by \eqref{fdren5}, $y_{n+1}<y_n$. If $p_n =1/C$ then, by definition, $y_{n+1}=0$ and thus $n+1 \notin \mathbf{N}$. This shows that $\{y_n\}_{n \in \mathbf{N}}$ is strictly decreasing and either $\mathbf{N}$ is finite or for all $n \in \N$ we have $y_{n+1}<y_n$.
	
	We now show that if $y_{n}>0$ we have $y_n \rightarrow 0$. Assume by contradiction that $\{y_n\}$ does not converge to $0$. Since it is strictly decreasing there must exist $y'>0$ such that $y_n \rightarrow y'$.
	Since for all $n \in \N$, $p_n \neq 1/C$, we must have that either $p_n<1/C$ or $p_n>1/C$ for infinitely many values of $n \in \N$.
	
	Assume the first case holds. Then there is a subsequence $\{ p_{n(l)}  \}_{l \in \N}$ such that $p_{n(l)}<1/C$ for all $l \in \N$.
	Since $y_{n}\rightarrow y'$ we have in particular that
	$$\lim_{l\rightarrow +\infty} y_{n(l)+1}=\lim_{l\rightarrow +\infty} y_{n(l)}=y'$$
	and by \eqref{fdren5}
	$y_{n(l)+1}=(1-C p_{n(l)}) y_{n(l)},$
	thus, we must have $1-C p_{n(l)} \rightarrow 1$ and therefore $p_{n(l)} \rightarrow 0$. Hence by the definition of $p_n$ and since $\ell(y_{n(l)})\rightarrow \ell(y')$, we have
	$
	\Gamma''_{\kappa_{n(l)}+1}\rightarrow 0.
	$
	
	Since $\lambda$ is irrational, $\{ \Gamma_n''  \}$ is an infinite sequence and converges to $0$. Thus there exists  an unique natural number $k'$ such that
	$ \Gamma''_{k'+1}<\ell(y')\leq \Gamma''_{k'},  $
	and by the definition of $\kappa_{n}$ we must have $\kappa_{n(l)}\rightarrow k'$. Thus we get $\Gamma''_{k'+1}=0$, which implies that $\lambda$ is rational which is a contradiction.
	
	The proof is analogous if $p_n>1/C$ for infinitely many values of $n \in \N$, hence we omit it.
\end{proof}

Given $n \in \N$ and $x \in \R$ we introduce the following maps:
\begin{equation*}\label{ren9}
\chi_n(x)=\left\{\begin{array}{ll}
\vspace{0.2cm}
x & , \ \textrm{if} \ p_{n}<1/C, \\
\vspace{0.2cm}
1-x & , \ \textrm{if} \ p_{n}>1/C, \\
1 & , \ \textrm{if} \ p_{n}=1/C,
\end{array}\right.
\quad \textrm{and} \quad
\omega_n(x)=\left\{\begin{array}{ll}
\vspace{0.2cm}
C & , \ \textrm{if} \ p_{n}<1/C, \\
\vspace{0.2cm}
D & , \ \textrm{if} \ p_{n}>1/C, \\
1 & , \ \textrm{if} \ p_{n}=1/C.
\end{array}\right.
\end{equation*}

The following lemma (we omit the proof) gives recursive expressions for $y_n$ and $p_n$ which can be obtained directly from the definitions of $y_n$ and $p_n$.

\begin{lemma}\label{propyN}
Given $\nu>0$ and $\mu \in \R$ satisfying $|\mu|>\nu$, for all $n \in \mathbf{N}\backslash\{0\}$ we have
\begin{equation*}\label{ren11}
y_n(\mu)=(1-\omega_{n-1}\chi_{n-1}(p_{n-1}(\mu)))y_{n-1}(\mu).
\end{equation*}
Moreover if $p_{n-1}(\mu)\neq 1/C$, we have
\begin{equation*}\label{ren12}
\chi_{n-1}(p_{n}(\mu))=\dfrac{\Upsilon_n(\mu)}{\Upsilon_{n-1}(\mu)}\dfrac{\chi_{n-2}(p_{n-1}(\mu))}{1-\omega_{n-1}\chi_{n-1}(p_{n-1}(\mu))}.
\end{equation*}
\end{lemma}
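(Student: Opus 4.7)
The plan is to verify both identities by unpacking the three cases in the definitions of $\chi_{n-1}$, $\omega_{n-1}$, and $p_n$. The key observation is that the expressions involving $\chi$ and $\omega$ are nothing more than a uniform rewriting of the recursions \eqref{fdren5} and the definition of $p_n$ that absorbs the case distinctions into a single formula.

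For the first identity, I would evaluate $\omega_{n-1}\chi_{n-1}(p_{n-1})$ in each of the three cases: it equals $Cp_{n-1}$ when $p_{n-1}<1/C$, equals $D(1-p_{n-1})$ when $p_{n-1}>1/C$, and equals $1$ when $p_{n-1}=1/C$. Substituting these into $1-\omega_{n-1}\chi_{n-1}(p_{n-1})$ recovers the three branches of \eqref{fdren5} exactly, which proves the first claim with no induction needed.

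For the second identity, I would assume $p_{n-1}\neq 1/C$ and observe from the definition of $p_n$ that in both relevant subcases
\begin{equation*}
\chi_{n-1}(p_n)=\dfrac{\Upsilon_n}{\ell(y_n)},
\end{equation*}
since when $p_{n-1}<1/C$ one has $\chi_{n-1}(p_n)=p_n=\Upsilon_n/\ell(y_n)$, and when $p_{n-1}>1/C$ one has $\chi_{n-1}(p_n)=1-p_n=\Upsilon_n/\ell(y_n)$. The same principle applied one index lower gives $\chi_{n-2}(p_{n-1})=\Upsilon_{n-1}/\ell(y_{n-1})$. Combining the first identity with the linearity $\ell(y)=2y/\nu$ from \eqref{ren121} yields $\ell(y_n)=(1-\omega_{n-1}\chi_{n-1}(p_{n-1}))\,\ell(y_{n-1})$. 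Substituting these three evaluations into the right-hand side of the claimed formula collapses it to $\Upsilon_n/\ell(y_n)$, which is $\chi_{n-1}(p_n)$, and the identity follows.

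The main obstacle is purely bookkeeping: one must track carefully that the case conditions on $\chi_{n-2}$, $\chi_{n-1}$, $\omega_{n-1}$ and the two conditional definitions of $p_n$ and $p_{n-1}$ are compatible in every branch, so that the cancellation $\chi_{n-2}(p_{n-1})=\Upsilon_{n-1}/\ell(y_{n-1})$ really holds regardless of the sign of $p_{n-2}-1/C$. No deeper structural input is required, which is consistent with the authors' decision to omit the proof.
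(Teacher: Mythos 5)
Your proof is correct, and it is exactly the direct verification from the definitions that the authors had in mind when they omitted the proof: the first identity is a case-by-case rewriting of \eqref{fdren5}, and the second follows from $\chi_{n-1}(p_n)=\Upsilon_n/\ell(y_n)$, $\chi_{n-2}(p_{n-1})=\Upsilon_{n-1}/\ell(y_{n-1})$ and the linearity of $\ell$. The only bookkeeping caveats are that for $n\in\mathbf{N}\setminus\{0\}$ one has $y_{n-1}>0$, so the case $p_{n-2}=1/C$ (where $\ell(y_{n-1})=0$) cannot occur, and that for $n=1$ the symbol $\chi_{-1}$ must be read as the identity, consistently with $p_0=\Upsilon_0/\ell(y_0)$.
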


Next theorem provides, under some conditions on $\lambda$ and $\eta$, a closed form expression for the sequence $\{p_n\}$ and shows that it is periodic with period at most 2. We will denote
\begin{equation*}\label{ren22}
\bar{\mu}=\frac{\nu}{\Phi^3}.
\end{equation*}

\begin{theorem}\label{rencor}
	Assume $\nu>0$, $\lambda= 1/(k+\Phi)$ and $\eta=1-k\lambda$ with $k \in \N$. Let $\mu \in {\mathbb R}$ be such that  $|\mu|\geq \nu$. 
	If $|\mu|> \bar{\mu}$, then $p_{n}(\mu)=p_{n+2}(\mu)$ for all $n\geq 0$, in particular
	\begin{equation}\label{pep3}
	p_n(\mu)=\dfrac{1}{C\Phi} \ \textrm{and} \ \ell(y_n(\mu))=C \lambda \Phi^{n+1}, \ \textrm{if} \ n \ \textrm{is even},
	\end{equation}
	and
	\begin{equation}\label{pep4}
	p_n(\mu)=1-\frac{1}{D \Phi} \ \textrm{and} \ \ell(y_{n}(\mu))= D \lambda \Phi^{n+1}, \ \textrm{if} \ n \ \textrm{is odd}.
	\end{equation}
	If $|\mu|\leq \bar{\mu}$, then $p_{n}(\mu)=p_{n+1}(\mu)$ for all $n\geq 1$. In particular, if $-\bar{\mu} < \mu < -\nu$, then for all $n \geq 1$, 
	\begin{equation}\label{pep15}
	p_n(\mu)=1-\dfrac{\Phi}{D} \ \textrm{and} \ \ell(y_n(\mu))=D \lambda \Phi^{2n}.
	\end{equation}
	If $\nu < \mu < \bar{\mu}$, then for all $n \geq 0$, 
	\begin{equation}\label{pep21}
	p_n(\mu)=\dfrac{\Phi}{C} \ \textrm{and} \ \ell(y_n(\mu))=C \lambda \Phi^{2n+1}.
	\end{equation}
\end{theorem}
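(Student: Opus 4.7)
The plan is to run an induction on $n$, driven by the closed forms $\Gamma''_n = \lambda\Phi^{2n}$ and $\Gamma'_n = \lambda\Phi^{2n+1}$ from Lemma \ref{tnewlambda}. The preliminary step is to note that $\lambda = 1/(k+\Phi)$ together with $\eta = 1-k\lambda$ forces $\eta = \Phi\lambda$, so that $y_0 = \Phi\lambda\mu\nu/(\mu+\nu)$ and $\ell(y_0) = C\Phi\lambda$. Two elementary identities are used repeatedly: $1/C + 1/D = 1$, equivalently $D(C-1)=C$, and $\Phi^3 = 2\Phi - 1$, equivalently $\Phi^2 = 1-\Phi$. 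Together they give the sharp bounds $C > 1/\Phi$ iff $|\mu|>\bar{\mu}$, $D > 1/\Phi$ iff $\mu > -\bar{\mu}$, and $C,D \in (1/\Phi, 1/\Phi^3)$ whenever $|\mu|>\bar{\mu}$; these control both the branch of \eqref{fdren5} driving the recursion and the branch of \eqref{ren1} selecting $\Upsilon_n$.

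For the case $|\mu|>\bar{\mu}$, one has $\ell(y_0) > \Gamma''_0$, hence $\kappa_0 = 0$, $\Upsilon_0 = \lambda$ and $p_0 = 1/(C\Phi) > 1/C$. The $D$-branch of \eqref{fdren5} then yields $\ell(y_1) = D\Phi^2\lambda$, which lies strictly between $\Gamma'_0 = \Phi\lambda$ and $\lambda/\Phi$; since $k \geq 1$ implies $\lambda/\Phi \leq 1$, the middle branch of \eqref{ren1} applies, and using $1-\Phi\lambda = k\lambda$ one computes $\Upsilon_1 = \Phi\lambda$ and $p_1 = 1-1/(D\Phi) < 1/C$. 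The inductive step then splits by parity: for even $n \geq 2$, $\ell(y_n) = C\lambda\Phi^{n+1}$ and the bounds on $C$ force $\kappa_n = n/2$, giving $\Upsilon_n = \lambda\Phi^n$ and $p_n = 1/(C\Phi)$; for odd $n \geq 3$, $\ell(y_n) = D\lambda\Phi^{n+1}$ lies below $\Gamma'_0$ (since $D\Phi^3 < 1$), so the third branch of \eqref{ren1} applies with $\kappa_n = (n-1)/2$, yielding $p_n = 1-1/(D\Phi)$. This produces the period-two behaviour of \eqref{pep3}--\eqref{pep4}.

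The two sub-cases of $|\mu|\leq\bar{\mu}$ are treated similarly but yield fixed points. For $\nu<\mu<\bar{\mu}$, $C < 1/\Phi$ forces $\ell(y_0) < \Gamma''_0$, so $\kappa_0 = 1$, $p_0 = \Phi/C < 1/C$; the $C$-branch then yields $y_{n+1} = \Phi^2 y_n$ uniformly, and an induction with $\kappa_n = n+1$ gives $p_n = \Phi/C$ throughout. For $-\bar{\mu}<\mu<-\nu$, still $C > 1/\Phi$ so $p_0 = 1/(C\Phi) > 1/C$ exactly as in the case $|\mu|>\bar{\mu}$, but now $D < 1/\Phi$ pushes $\ell(y_1)$ below $\Gamma'_0$ already, so the third branch of \eqref{ren1} applies at step $1$: $\kappa_1 = 1$, $\Upsilon_1 = \lambda\Phi^3$, and $p_1 = 1-\Phi/D > 1/C$. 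From then on the $D$-branch acts as $y_{n+1} = \Phi^2 y_n$, and an induction yields $\ell(y_n) = D\lambda\Phi^{2n}$ and $p_n = 1-\Phi/D$ for all $n \geq 1$. The main obstacle throughout is the bookkeeping: at each step one must verify which of the three intervals for $\ell(y_n)$ (relative to $\Gamma'_0$ and $1$) selects the correct branch of \eqref{ren1}, and then identify the smallest $k$ for which $\Gamma'_k$ or $\Gamma''_k$ drops below $\ell(y_n)$, a task that is driven entirely by the sharp inequalities on $C$ and $D$ and by $\Phi^3 = 2\Phi - 1$.
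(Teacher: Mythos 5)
Your proposal is correct and follows essentially the same route as the paper: an induction on $n$ driven by the closed forms of Lemma \ref{tnewlambda}, the H{\"o}lder-conjugacy identities for $C$ and $D$, and the golden-ratio relations, split into the same three parameter cases, with the branch of \eqref{fdren5} and \eqref{ren1} selected by the position of $\ell(y_n)$ relative to the $\Gamma',\Gamma''$ sequences. The only blemish is the preliminary pair of biconditionals ("$C>1/\Phi$ iff $|\mu|>\bar{\mu}$" and "$D>1/\Phi$ iff $\mu>-\bar{\mu}$"), which fail for $\mu<-\nu$ (e.g.\ $C>2>1/\Phi$ whenever $\mu<-\nu$, and $D>1/\Phi$ precisely when $\mu<-\bar{\mu}$); this is immaterial, since in each case of your argument you invoke the correct inequalities ($C>1/\Phi$ and $D<1/\Phi$ for $-\bar{\mu}<\mu<-\nu$, $C<1/\Phi$ for $\nu<\mu<\bar{\mu}$, and $1/\Phi<C,D<1/\Phi^3$ for $|\mu|>\bar{\mu}$), which is all the induction needs.
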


\begin{proof}

Let us first investigate $C$ and $D$ as in \eqref{CandD}. It is clear that
\begin{equation*}
	\dfrac{1}{\Phi} < 2 <  D=\dfrac{2\mu}{\mu - \nu} <  \dfrac{2}{1-\Phi^3} = \dfrac{1}{\Phi^2},
\end{equation*}
where we used the fact that $ \partial  D(\mu,\nu) / \partial \mu =-\nu /(\mu-\nu)<0,$
as long as $\nu>0$ and also $\Phi^2=1-\Phi$. Now, if $\mu < -\bar{\mu} < -\nu<0$ we have $1<D< 2 < 1/\Phi^2$.
Since $\mu < -\bar{\mu}$ we get
\begin{equation*}
	\mu < -\bar{\mu}=-\frac{\nu}{\Phi^3} = -\frac{\nu}{2\Phi-1},
\end{equation*}
which is equivalent to $2\mu \Phi < \mu-\nu$, and since $\mu <0$ we get
	$D>\frac{2\mu}{(\mu-\nu)} > \frac{1}{\Phi}.$

Since $C$ and $D$ are H{\"o}lder conjugate we have
$\frac{1}{1-\Phi^2} < C < \frac{1}{1-\Phi}.$
Combining this  we get 
 that if $|\mu|>\bar{\mu}$, then
\begin{equation}\label{ren30}
\frac{1}{\Phi} < C < \frac{1}{\Phi^2}, \quad \frac{1}{\Phi} < D < \frac{1}{\Phi^2}.
\end{equation}


We now prove by induction on $n$ that if $|\mu|>\bar{\mu}$, we have \eqref{pep3} and \eqref{pep4} for all $n \geq 0$.

From \eqref{ren66} we have
$\ell(y_0(\mu))=2\mu(1-k\lambda)/(\mu+\nu).$
Since $\lambda \Phi=1-k\lambda$ we get $\ell(y_0(\mu))=C \lambda \Phi$.
For $|\mu|>\bar{\mu}$ we get from \eqref{ren30} that
\begin{equation*}\label{pep1}
\lambda<\ell(y_0(\mu))<\lambda/\Phi \leq 1.
\end{equation*}
Hence we have that $\kappa_0=0$, and by Lemma \ref{tnewlambda} and \eqref{ren1} we get $\Upsilon_0(\mu)=\Gamma_0$ and by definition of $p_0$ we have
\begin{equation*}
	p_0=\frac{\Gamma_0}{\ell(y_0(\mu))}=\frac{\Gamma_0 \nu}{2 y_0}=\frac{\Gamma_0}{2 \mu \eta}(\mu+\nu)=\frac{\lambda}{C \eta} = \frac{1}{C \Phi}.
\end{equation*}
Thus we get \eqref{pep3} for $n=0$.

Since $\Phi <1$ and \eqref{pep3} holds for $n=0$, we have $p_0(\mu)>1/C$, hence by Lemma \ref{propyN} we have 
\begin{equation}\label{pep7}
\ell(y_1(\mu))=(1-D(1-p_0(\mu)))\ell(y_0(\mu)).
\end{equation}
Simple computations show that
\begin{equation}\label{pep5}
1-D(1-p_0(\mu))=1-D(1-\frac{\lambda}{C \eta})=\frac{D}{C}\Phi,
\end{equation}
where we used H{\"o}lder conjugacy of $C$ and $D$ several times to simplify the expression. By \eqref{pep5} and \eqref{pep7} we have $\ell(y_1(\mu))=\dfrac{D}{C}\Phi \ell(y_0(\mu))$ and since \eqref{pep3} holds for $n=0$, we get
\begin{equation}\label{pep6}
\ell(y_1(\mu))=D \lambda \Phi^2.
\end{equation}

Now,  by Lemma \ref{propyN} we have
\begin{equation}\label{pep8}
p_1(\mu)=1-\frac{\Upsilon_1(\mu)}{\lambda}\frac{p_0(\mu)}{1-D(1-p_0(\mu))},
\end{equation}
and by \eqref{pep6} and \eqref{ren30}, since $|\mu|>\bar{\mu}$, we get
$\lambda \Phi < \ell(y_1(\mu)) < \lambda$. This together with Lemma \ref{tnewlambda} and \eqref{ren1} shows that $\Upsilon_1(\mu)=\lambda \Phi$, and from \eqref{pep5} and \eqref{pep8} we get
$p_1(\mu) =1-1/(D\Phi).$
Together with \eqref{pep6} this shows \eqref{pep4} holds for $n=1$.

Let $n\geq0$ be an even number. We now assume that \eqref{pep3} holds for $n$, \eqref{pep4} holds for $n+1$ and prove that \eqref{pep3} holds for $n+2$ and \eqref{pep4} holds for $n+3$.

Note that since we assume \eqref{pep3} holds for $n$ and \eqref{pep4}  for $n+1$, by \eqref{ren30} we have
\begin{equation}\label{pep11}
\begin{array}{ll}
\lambda \Phi^{n} < \ell(y_n(\mu)) < \lambda \Phi^{n-1}, & \lambda \Phi^{n+1} < \ell(y_{n+1}(\mu)) < \lambda \Phi^{n}.
\end{array}
\end{equation}

Since $1/\Phi>1$ we have
$p_1(\mu)=1-\frac{1}{d\Phi} < 1-\frac{1}{D}=\frac{1}{C},$
thus $p_{n+1}(\mu)<1/C$, since we also have $p_n(\mu)>1/C$ we get from Lemma \ref{propyN} that
\begin{equation}\label{pep9}
\ell(y_{n+2}(\mu))=(1-C p_1(\mu))(1-D(1-p_0(\mu)))\ell(y_n(\mu)).
\end{equation}
After a simple computation we have
\begin{equation}\label{ren35}
1-C p_1(\mu)=\frac{\Phi}{d-1}=\frac{C}{D}\Phi.
\end{equation}
Combining \eqref{pep5}, \eqref{pep9} and \eqref{ren35} we get
\begin{equation}\label{pep10}
\ell(y_{n+2}(\mu))=\Phi^2\ell(y_n(\mu)).
\end{equation}
Since we assume \eqref{pep3} for $n$, we get  from \eqref{pep10} that
\begin{equation}\label{ren39}
\lambda \Phi^{n+2} < \ell(y_{n+2}(\mu)) < \lambda \Phi^{n+1}.
\end{equation}

We now prove that \eqref{pep3} holds for $n+2$. 
Since \eqref{pep3} holds for $n$, from \eqref{pep10} we get $\ell(y_{n+2}(\mu))=C \lambda \Phi^{n+3}$.
To see that $p_{n+2}(\mu)=p_0(\mu)$ note that by Lemma \ref{tnewlambda}, by the definition of $\Upsilon_{n+1}$ and by \eqref{pep11} and \eqref{ren39}, we have $\Upsilon_{n+1}(\mu)=\Gamma_{n+1}$ and $\Upsilon_{n+2}(\mu)=\Gamma_{n+2}$. Together with $p_n(\mu)>1/C$ and $p_{n+1}(\mu)<1/C$, Lemma \ref{propyN} implies
\begin{equation*}
	p_{n+2}(\mu)=\frac{\Gamma_{n+2}}{\Gamma_{n+1}}\frac{1-p_{n+1}(\mu)}{1-C p_{n+1}(\mu)}=\Phi \frac{\frac{1}{D \Phi}}{1-C +\frac{C}{D \Phi}}=\frac{1}{C \Phi}.
\end{equation*}
Finally we prove that \eqref{pep4} holds for $n+3$.
Since \eqref{pep4} holds for $n+1$ and \eqref{pep3} holds for $n+2$, from \eqref{pep10} we get $\ell(y_{n+3}(\mu))=D \lambda \Phi^{n+4}$. By \eqref{ren30} this gives
\begin{equation*}\label{pep12}
\lambda \Phi^{n+3} < \ell(y_{n+3}(\mu)) < \lambda \Phi^{n+2}.
\end{equation*}
To see that $p_{n+3}(\mu)=p_1(\mu)$ note that the above inequalities, by Lemma \ref{tnewlambda} and by the definition of $\Upsilon_{n+1}$, we have $\Upsilon_{n+3}(\mu)=\Gamma_{n+3}$. Together $p_{n+1}(\mu)<1/C$ and $p_{n+2}(\mu)>1/C$, by Lemma \ref{propyN} this gives
\begin{equation*}
	p_{n+3}(\mu)=1-\frac{\Gamma_{n+3}}{\Gamma_{n+2}}\frac{p_{n+2}(\mu)}{1-D(1- p_{n+2}(\mu))}=1-\Phi \frac{\frac{1}{C \Phi}}{1 - D(1-\frac{1}{C \Phi})}=1-\frac{1}{D \Phi}.
\end{equation*}
Therefore if $|\mu|> \bar{\mu}$, \eqref{pep3} and \eqref{pep4} holds for all $n\geq0$ and thus $p_{n}(\mu)=p_{n+2}(\mu)$ for all $n\geq 0$.

We now prove that if $-\bar{\mu} < \mu < -\nu$, we have
\begin{equation}\label{pep13}
\Phi < D < \frac{1}{\Phi}.
\end{equation}
Since $\lambda<1$ and $D > 1$ the left inequality follows.
Since $\mu>-\bar{\mu}$ and $D(-\bar{\mu},\nu)=1/\Phi$ we get $D < 1/\Phi$.

We now prove by induction on $n$ that if $-\bar{\mu} < \mu < -\nu$, we have \eqref{pep15} for all $n \geq 1$.
From \eqref{ren66} and $\lambda \Phi=1-k\lambda$ we get $\ell(y_0(\mu))=C \lambda \Phi$. Since $\mu < -\nu$, we have $\Phi < \ell(y_0(\mu)) < + \infty$, hence $\kappa_0=0$ and by Lemma \ref{tnewlambda} we get $\Upsilon_0(\mu)=\Gamma_0$. Thus by Lemma \ref{propyN} we have
\begin{equation*}\label{pep16}
p_0(\mu)=\frac{1}{C \Phi},
\end{equation*}
from which we get $p_0(\mu)>1/C$, hence (\ref{pep7}-\ref{pep8}) hold.
By \eqref{pep6} and \eqref{pep13}, since $\mu>-\bar{\mu}$, we have
$\lambda \Phi^3 < \ell(y_1(\mu)) < \lambda \Phi$. This together with Lemma \ref{tnewlambda} and by the definition of $\Upsilon_{n+1}$ shows that $\Upsilon_1(\mu)=\Gamma_3$, and from \eqref{pep5} and \eqref{pep8} we get $p_1(\mu)=1-\Phi/D$. Hence \eqref{pep15} holds for $n=1$.
We now assume that \eqref{pep15} holds for $n$ and prove that \eqref{pep15} holds for $n+1$.
Since $\Phi < 1$ we have $1-\Phi/D> 1-1/D=1/C$, thus by Lemma \ref{propyN},
\begin{equation*}\label{pep17}
\ell(y_{n+1}(\mu))=(1-D(1-p_1(\mu)))\ell(y_n(\mu)),
\end{equation*}
and as \eqref{pep15} holds for $n$, combining this with \eqref{pep13} we get
\begin{equation*}\label{ren42}
	\lambda \Phi^{2n+1}< \ell(y_n(\mu))<\lambda \Phi^{2n-1} \ \textrm{and} \ \lambda \Phi^{2n+3}< \ell(y_{n+1}(\mu))<\lambda \Phi^{2n+1}.
\end{equation*}
Therefore by Lemma \ref{tnewlambda} and by the definition of $\Upsilon_{n+1}$,  $\Upsilon_n(\mu)=\Gamma_{2n+1}$ and $\Upsilon_{n+1}(\mu)=\Gamma_{2n+3}$.
By Lemma \ref{propyN} we get
\begin{equation*}
	1-p_{n+1}(\mu)=\frac{\lambda \Phi^{2n+1}}{\lambda \Phi^{2n-1}}\frac{1-p_n(\mu)}{1-D(1-p_n(\mu))}=\Phi^2\frac{1-p_n(\mu)}{\Phi^2}=1-p_n(\mu).
\end{equation*}
Therefore if $-\bar{\mu} < \mu < -\nu$, then \eqref{pep15} holds for $n \geq 1$ and thus $p_{n}(\mu)=p_{n+1}(\mu)$ for all $n\geq 1$.

We now prove by induction on $n$ that if $\nu < \mu < \bar{\mu}$, \eqref{pep21} holds for all $n \geq 0$.

We have
\begin{equation*}\label{pep20}
C(\nu,\nu)=1 \ \textrm{and} \ C(\bar{\mu},\nu)=\dfrac{2\bar{\mu}}{\bar{\mu}+\nu}=\dfrac{1}{\Phi},
\end{equation*}
Since $\mu \rightarrow C(\mu,\nu)$ is a continuous map for $\nu < \mu < \bar{\mu}$ this gives
\begin{equation}\label{pep19}
1<C<\frac{1}{\Phi}.
\end{equation}
From \eqref{ren66} and $\lambda \Phi=1-k\lambda$ we get $\ell(y_0(\mu))=C \lambda \Phi$. Since $\mu < \bar{\mu}$, from \eqref{pep19} we get $\lambda \Phi < \ell(y_0(\mu)) < \lambda$.

Hence we also have $\lambda \Phi^2 < \ell(y_0(\mu)) < \lambda$. Therefore by Lemma \ref{tnewlambda} $\kappa_0=0$ and we get $\Upsilon_0(\mu)=\Gamma_2$. Thus by Lemma \ref{propyN} we have
\begin{equation*}\label{ren46}
	p_0=\frac{\Gamma_0}{\ell(y_0(\mu))}=\frac{\lambda \Phi^2}{C\lambda \Phi}= \frac{\Phi}{C},
\end{equation*}
and thus \eqref{pep21} holds for $n=0$.
We now assume that \eqref{pep21} holds for $n$ and prove that \eqref{pep15} holds for $n+1$.
Since $p_n(\mu)=p_0(\mu)< 1/C$ we have,
\begin{equation*}\label{ren47}
\ell(y_{n+1}(\mu))=(1-C p_0(\mu))\ell(y_n(\mu)) = \Phi^2 \ell(y_n(\mu)).
\end{equation*}
Since we assume \eqref{pep21} holds for $n$, then we have
\begin{equation*}\label{pep22}
\ell(y_n(\mu))=C \lambda \Phi^{2n+1}.
\end{equation*}
From these two identities, combined with \eqref{pep19}, we get
\begin{equation*}
	\lambda \Phi^{2n+2} < \ell(y_{n}(\mu)) < \lambda \Phi^{2n} \  \textrm{and} \ \lambda \Phi^{2n+4} < \ell(y_{n+1}(\mu)) < \lambda \Phi^{2n+2}.
\end{equation*}
Therefore by Lemma \ref{tnewlambda} and by the definition of $\Upsilon_{n+1}$ we have $\Upsilon_n(\mu)=\Gamma_{2(n+1)}$ and $\Upsilon_{n+1}(\mu)=\Gamma_{2(n+2)}$.
By Lemma \ref{propyN} we get
\begin{equation*}\label{ren48}
	p_{n+1}(\mu)=\frac{\Gamma_{n+1}}{\Gamma_{n}}\frac{p_n(\mu)}{1-C p_n(\mu)}=\frac{\lambda \Phi^{2n+2}}{\lambda \Phi^{2n}}\frac{\frac{\Phi}{C}}{1-\Phi}=\frac{\Phi}{C}
\end{equation*}
Therefore if $\nu < \mu < \bar{\mu}$ then  \eqref{pep21} holds for $n \geq 1$ and thus if $|\mu|\leq \bar{\mu}$ then $p_{n}=p_{n+1}$ for all $n\geq 1$.
This completes the proof.
\end{proof}

\section{Dynamics of the first return map to $P_c$}\label{Dynamics of the first return map}

In this section we introduce a map, denoted by $\rho$, containing information related to the first return under our   transformation $F$ to the middle cone $P_c$  and we show how it can be computed using tools from sections \ref{Bifurcation sequence} and \ref{Dynamical sequences}. This gives a dynamical meaning to the  sequences introduced in Section \ref{Dynamical sequences}, $\{y_n\}$ is the sequence of imaginary parts of the discontinuities of the map $\rho$, while $\{p_n\}$ is the sequence of ratios of the horizontal jumps produced by discontinuities of $\rho$ relative to the cone width $\ell(y_n)$.

Recall \eqref{beta}. Throughout the rest of the paper we set $\nu = \tan(\beta)$. Note that $\nu$ depends on $|\alpha|$, and when necessary to stress this dependence we write $\nu=\nu(|\alpha|)$. Let $\mu' \in \mathbb{R}$, be such that $|\mu'|>\nu$. With this notation we can write  $$P_c=\{z \in \mathbb{H}:-\nu \Re(z) < \Im(z) \wedge \Im(z) > \nu \Re(z)\}.$$

Note that by \eqref{ren121}, $\ell(y)$ is the length of the line segment
$P_c \cap \{z \in \mathbb{H}:\Im(z)=y\}.$
Denote by $L'_1$ and $L'_d$, respectively, the lines $\overline{P_0}\cap \overline{P_1}$ and $\overline{P_d}\cap \overline{P_{d+1}}$ and consider a line $L'_S$ of slope $\mu'$ lying on $P_c$,

\begin{equation*}\label{ren104}
\begin{array}{l}
L'_1=\{z \in \mathbb{H}:\Im(z)=\nu \Re(z)\},\\
L'_d=\{z \in \mathbb{H}: \Im(z)=-\nu \Re(z)\},\\
L'_S=\{z \in \mathbb{H}: \Im(z)=\mu ' \Re(z)\}.
\end{array}
\end{equation*}

Let $L''_S$ be the image of $L'_S$ by $F$, denote its slope by $\mu$ and consider also the line $L_S$ of slope $\mu$ lying on $P_c$, this is:
\begin{equation*}
	\begin{array}{ll}
		L_S=\{z \in \mathbb{H}: \Im(z)=\mu \Re(z)\}, \\ 
		L''_S=\{z \in \mathbb{H}:\Im(z)=\mu \Re(z)+\big(1+\dfrac{\mu}{\nu}\big)y_0\},
	\end{array}
\end{equation*}
where $y_0$ is as in \eqref{ren66}.

Let $R_{\theta}$ be a rotation by an angle $\theta$ centred at the origin. If $\mu '$ is such that $L'_S=L'_S(\mu ')$ is contained in $P_j$, $j=1,...,d$ then
we have $L_S(\mu)=R_{\theta_j}(L_S'(\mu'))$, with $\theta_j=\theta_j(\alpha,\tau)$.
Thus $\mu$ and $\mu'$  are related by the expression
\begin{equation*}\label{ren23}
\mu ' = \frac{\mu-\tan(\theta_j)}{1+\mu\tan(\theta_j)}.
\end{equation*}
or, equivalently,
\begin{equation*}\label{ren23a}
\mu  = \frac{\mu'+\tan(\theta_j)}{1-\mu'\tan(\theta_j)}.
\end{equation*}
The image by $F$ of a point  $z' \in L'_S$ is  a point $z \in L''_S$ where
\begin{equation*}\label{ren60}
\Im(z)=\gamma(\mu,\mu') \Im(z') , \quad \gamma(\mu,\mu')=\sqrt{ \left(1+\dfrac{1}{\mu '^2} \right) / \left( 1+\dfrac{1}{\mu^2} \right)} \ .
\end{equation*}


\begin{figure}[t]
	\begin{subfigure}{0.49\textwidth}
		\centering
		\includegraphics[width=1\linewidth]{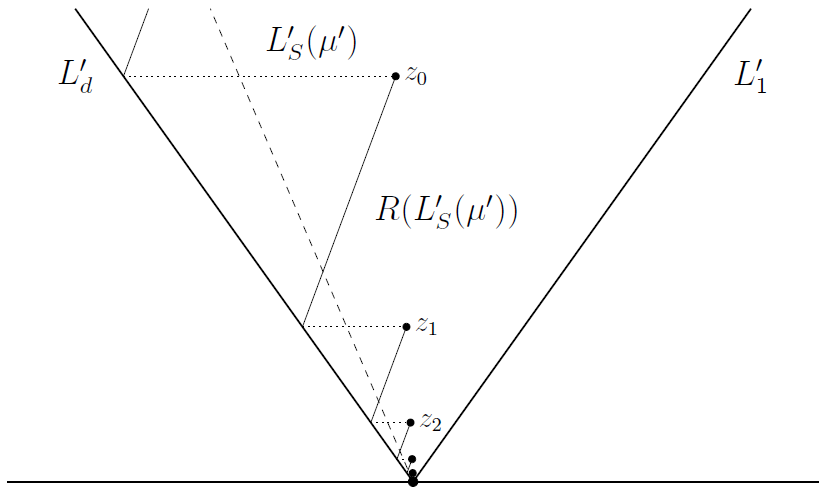}
		\caption{}
		\label{fig:FRLS1}
	\end{subfigure}
	\begin{subfigure}{0.49\textwidth}
		\centering
		\includegraphics[width=1\linewidth]{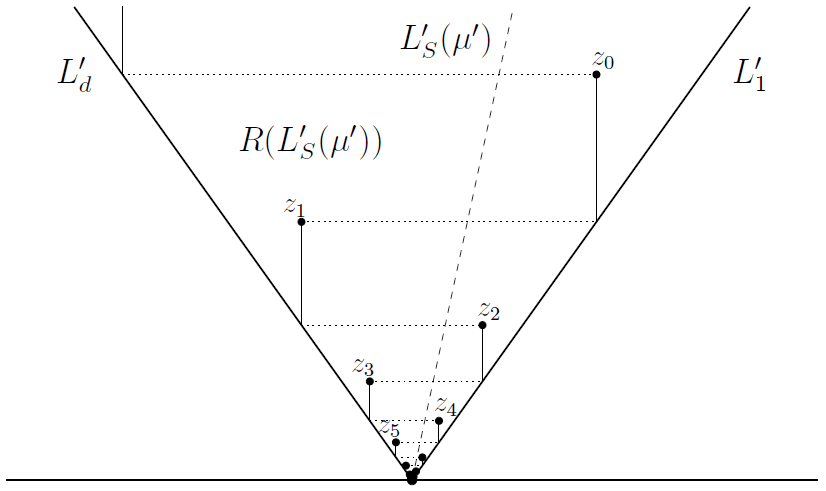}
		\caption{}
		\label{fig:FRLS2}
	\end{subfigure}\\
	\centering
	\captionsetup{width=1\textwidth}
	\caption{An illustration of the action of map $R$. The dashed line is $L'_S(\mu')$ and the union of disjoint line segments is  $R(L'_S(\mu'))$. Also marked are the points $z_n=(p_n-1/2)\ell(y_n)+iy_n$. (A) shows points $z_n$ such that the corresponding sequence $p_n(\mu)$ satisfies $p_{n}(\mu)=p_{n+1}(\mu)$ for all $n\geq 1$. (B) shows points $z_n$ such that the corresponding sequence $p_n(\mu)$ satisfies $p_{n}(\mu)=p_{n+2}(\mu)$ for all $n\geq 0$.}
	\label{fig:FRLS}
\end{figure}

Note that by the definition of $\mu$, since $|\mu'|>\nu$, we also have $|\mu|>\nu$. Now for $y>0$, let $\xi_S(y)$ denote the point $z \in L'_S(\mu')$, such that $\Im(F(z))=y$. This point is unique and is given by $z = (\mu'^{-1}+i)\gamma(\mu,\mu')^{-1}y.$

Define the first return of $\xi_S(y)$ to $P_c$ as the map $\rho:\R^+ \rightarrow P_c$ given by
\begin{equation*}\label{rhoandRa}
\rho(y)=R(\xi_S(y)).
\end{equation*}
By the definition of $R$ we have
\begin{equation*}\label{rhoandR}
R(z)= \rho(\Im(F(z))),
\end{equation*}
for $z \in P_c$. Thus, the study of the map $\rho$ and $R$ are very closely related.

Let
\begin{equation*}\label{fd9}
\mathcal{D}=\{y>0: \rho \ \textrm{is discontinuous at} \ y   \}.
\end{equation*}
Theorem \ref{t5.2} relates the sequence $\{y_n\}_{n \in \mathbf{N}}$ to the set $\mathcal{D}$, and characterizes the map $\rho$. Before stating and proving this theorem we need the following lemma.

\begin{lemma}\label{l1}
	i) Assume there is an $n_1 \in \N$ and constants $\delta\leq \Gamma_{n_1+1}'$, $\ell$ and $\ell'$ such that $\Gamma_{n_1+1}'\leq \ell < \ell' < \ell+ \delta \leq \Gamma_{n_1}'$, then $n_{\ell}(1+\ell')=k_{n_1+1}'$ and
	\begin{equation}\label{(ii)}
	r_{\ell}'(1+\ell')=1+\ell'-\Gamma_{n_1+1}'.
	\end{equation}

	ii) Assume there is an $n_d \in \N$ and constants $\ell$ and $\ell'$ such that $0< \ell'\leq \Gamma_{n_d+1}''\leq \ell < \Gamma_{n_d}''$, then $n_{\ell}(1-\ell')=k_{n_d+1}''$ and
	\begin{equation}\label{(iii)}
	r_{\ell}'(1-\ell')=1-\ell'+\Gamma_{n_d+1}''.
	\end{equation}
\end{lemma}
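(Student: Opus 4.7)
The plan is to mimic the proof of Theorem \ref{n9}, which handles the endpoints of $\overline{I_c(\ell)}$, by comparing the orbits of $1\pm\ell'$ to that of $1$ via Lemma \ref{dlemma}. The key observation is that while the orbit of $1\pm\ell'$ under $g_\ell$ stays outside $\overline{I_c(\ell)}$, it coincides with its orbit under the simpler map $g$, so one can work exclusively with $g$ and only check the first entry into $\overline{I_c(\ell)}$.

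For part (i), I would first record that by Theorem \ref{n7} and Lemma \ref{n1} we have $g^{k'_{n_1+1}}(1) = 1-\Gamma'_{n_1+1}$, and that $d^-(k'_{n_1+1}-1) = \Gamma'_{n_1}$, since the closest approach to $1$ from below in times $1,\dots,k'_{n_1+1}-1$ is achieved at $k'_{n_1}$ with value $1-\Gamma'_{n_1}$. The hypothesis $\ell+\delta\le\Gamma'_{n_1}$ gives $\ell'<\Gamma'_{n_1}$, so part (ii) of Lemma \ref{dlemma}, applied with $N=k'_{n_1+1}-1$, yields $g^n(1+\ell')=g^n(1)+\ell'$ for $2\le n\le k'_{n_1+1}-1$; the case $n=1$ follows directly from $g(1+\ell')=\ell'$, since $1+\ell'\in(1,1+\lambda]$. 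Next I would verify that none of the points $g^n(1+\ell')$ lie in $\overline{I_c(\ell)}=[1,1+\ell]$ for $1\le n\le k'_{n_1+1}-1$: this reduces to showing $g^n(1)\notin[1-\ell',1-\ell'+\ell]$, and the combined bounds $\ell'<\Gamma'_{n_1}$ and $\ell<\ell'$ place this interval inside the open window $(1-\Gamma'_{n_1},1)$, which the orbit of $1$ avoids by the definition of $k'_{n_1+1}$. Finally, I would compute $g^{k'_{n_1+1}}(1+\ell')=1+\ell'-\Gamma'_{n_1+1}$ by applying one further step of $g$ (the point $g^{k'_{n_1+1}-1}(1)+\ell'$ lies in $[0,1]$, so $g$ adds $\lambda$), and invoke $\delta\le\Gamma'_{n_1+1}$ to conclude that this image lies in $(1,1+\ell]\subseteq\overline{I_c(\ell)}$. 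Since $1+\ell'\notin\overline{I_c(\ell)}$, $r'_\ell$ agrees with $r_\ell$ at this point, giving \eqref{(ii)}.

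Part (ii) is proved symmetrically by applying part (i) of Lemma \ref{dlemma} with $N=k''_{n_d+1}-1$ and shift $\ell'$, using $d^+(k''_{n_d+1}-1)=\Gamma''_{n_d}$ together with the assumption $\ell'\le\Gamma''_{n_d+1}<\Gamma''_{n_d}$ to obtain $g^n(1-\ell')=g^n(1)-\ell'$ throughout the range $0\le n\le k''_{n_d+1}-1$. One then verifies non-entry into $\overline{I_c(\ell)}$ before time $k''_{n_d+1}$, and computes the image at that time as $g^{k''_{n_d+1}}(1-\ell')=1-\ell'+\Gamma''_{n_d+1}$, which lies in $[1,1+\ell]$ thanks to $0<\ell'\le\Gamma''_{n_d+1}\le\ell$, so this is indeed the first return time and \eqref{(iii)} follows.

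The main obstacle in both parts is the careful bookkeeping in the non-entry step: one must use the precise structure of the orbit of $1$ furnished by Theorem \ref{n7} (identifying the bifurcation sequence with the semiconvergents of $\lambda$) to show that the relevant thin interval of width $\ell$ near $1$ is avoided by the orbit of $1$ up to the designated return time. This is exactly where the hypothesis $\delta\le\Gamma'_{n_1+1}$ together with $\ell'<\ell+\delta\le\Gamma'_{n_1}$ in (i), and the two-sided sandwich of $\ell'$ and $\ell$ by consecutive semiconvergent errors in (ii), are used in an essential way.
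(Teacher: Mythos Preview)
Your proposal is correct. For part~(ii) your argument is essentially the same as the paper's. For part~(i), however, the paper takes a shorter route: rather than tracking the orbit of $1+\ell'$ via Lemma~\ref{dlemma} directly, it applies Theorem~\ref{n9} at the parameter~$\ell'$ (which satisfies $\Gamma'_{n_1+1}\le\ell'<\Gamma'_{n_1}$) to obtain both $n_{\ell'}(1+\ell')=k'_{n_1+1}$ and $g^{n_{\ell'}(1+\ell')}(1+\ell')=1+\ell'-\Gamma'_{n_1+1}$ in one stroke. The transfer from $\ell'$ to $\ell$ is then immediate: since the orbit of $1+\ell'$ avoids $[1,1+\ell']$ up to time $k'_{n_1+1}$, it a fortiori avoids the smaller interval $[1,1+\ell]$, and the landing point $1+\ell'-\Gamma'_{n_1+1}$ falls in $(1,1+\ell]$ by the bound $\ell'<\ell+\delta\le\ell+\Gamma'_{n_1+1}$. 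Your approach instead unpacks Theorem~\ref{n9} inline, redoing the orbit comparison with~$1$ via Lemma~\ref{dlemma} and checking the non-entry condition by hand. Both arguments work; the paper's is more economical because Theorem~\ref{n9} has already done the heavy lifting, while yours is more self-contained and makes the mechanism of avoidance of $\overline{I_c(\ell)}$ fully explicit.
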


\begin{proof}
				We begin by proving i). First note that as $1+\ell' \notin (1,1+\ell)$ we have
				\begin{equation}\label{(IV)}
				r_{\ell}'(1+\ell')=r_{\ell}(1+\ell').
				\end{equation}
				Also it is clear that for $1 \leq n < n_{\ell'}(1+\ell')$ we have $g^n(1+\ell') \notin [1,1+\ell']$, and since $\ell\leq \ell'$ this shows that $g^n(1+\ell') \notin [1,1+\ell]$ as well. Thus $n_{\ell}(1+\ell')\geq n_{\ell'}(1+\ell')$.
				
				Since $\Gamma_{n_1+1}'\leq \ell'  < \Gamma_{n_1}'$, by Theorem \ref{n9} we have
				\begin{equation*}\label{(V)}
				g^{n_{\ell'}(1+\ell')}(1+\ell')=1+\ell'-\Gamma_{n_1+1}',
				\end{equation*}
				and as $\ell \leq \ell' \leq \ell+ \delta$ this implies that $g^{n_{\ell'}(1+\ell')}(1+\ell') \in [1,1+\ell]$, thus $n_{\ell}(1+\ell')= n_{\ell'}(1+\ell')$ and from \eqref{(IV)} we get \eqref{(ii)}. Since by Theorem \ref{n9} we have $n_{\ell'}(1+\ell')=k_{n_1+1}'$ this shows that $n_{\ell}(1+\ell')=k_{n_1+1}'$ as well.
				
				We now prove ii). Note that as $1-\ell'<1$ we have
				\begin{equation}\label{(VI)}
				r_{\ell}'(1-\ell')=r_{\ell}(1-\ell').
				\end{equation}
				By the definition of $d^+$, since $\ell'<\ell$ we have $\ell'< d^+(n_\ell(1-\ell'))$, hence, by Lemma \ref{dlemma} we get
				\begin{equation*}\label{(VII)}
				g^{n_\ell(1-\ell')}(1-\ell')=r_{\ell}(1)-\ell'.
				\end{equation*}
				As $\Gamma_{n_d+1}''\leq \ell < \Gamma_{n_d}''$ we can apply Theorem \ref{n9} from whence we obtain
				\begin{equation}\label{(VIII)}
				g^{n_\ell(1-\ell')}(1-\ell')=1-\ell'+\Gamma_{n_d+1}''.
				\end{equation}
				As $0 < \ell \leq \Gamma_{n_d+1}''$ we get $g^{n_\ell(1-\ell')}(1-\ell')\in [1,1+\ell']$, hence $n_{\ell'}(1-\ell')=n_\ell(1-\ell')$ which implies that $r_{\ell}(1-\ell')=g^{n_\ell(1-\ell')}(1-\ell')$. Thus, combining \eqref{(VI)} and \eqref{(VIII)} we get \eqref{(iii)}. Since by Theorem \ref{n9} we have $n_{\ell}(1)=k_{n_d+1}''$ and combined with \eqref{(VIII)} this proves  that $n_{\ell}(1-\ell')=k_{n_d+1}''$ as well.
\end{proof}

\begin{theorem}\label{t5.2}
	Assume $\lambda \in (0,1) \backslash \mathbb{Q}$ and $|\mu'|>\nu>0$. Then $\rho$ is a piecewise affine map of slope $\mu^{-1}$. The set $\mathcal{D}$ is equal to the union of all points in the sequence $\{y_n\}_{n \in \mathbf{N}}$. Furthermore, for all $n \in \mathbf{N}$;
	If $\rho(y_n) \in L'_1$, for $y_{n+1}\leq y < y_n$ we have
	\begin{equation}\label{a4}
	\rho(y)=F^{k(\xi_S(y_n))}(\xi_S(y))-\Upsilon_n.
	\end{equation}
	If $\rho(y_n) \in L'_d$, for $y_{n+1}\leq y < y_n$ we have
	\begin{equation}\label{a5}
	\rho(y)=F^{k(\xi_S(y_n))}(\xi_S(y))+\Upsilon_n.
	\end{equation}
	Also $\rho(y_n) \in L'_1$ (resp. $\rho(y_n) \in L'_d$) if and only if $p_{n-1}>1/C$ (resp. $p_{n-1}<1/C$). 
\end{theorem}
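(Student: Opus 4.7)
The plan is to use Lemma \ref{prop2.4} to translate the problem into the interval setting and then exploit the explicit hitting-time formulas of Section \ref{Bifurcation sequence}. Lemma \ref{prop2.4} yields
$$\rho(y) = s^{-1}\bigl(r'_{\ell(y)}(\xi(y))\bigr) + iy, \qquad \xi(y) := s(\Re(F(\xi_S(y)))).$$
A direct computation from the equations of $L'_S$, $L''_S$ and the definition of $\xi_S$ gives $\xi(y) = 1 + (y - y_0)(\mu+\nu)/(\mu\nu)$, which is strictly increasing in $y$ (since $|\mu|>\nu>0$ forces $(\mu+\nu)/(\mu\nu)>0$) and satisfies $\xi(y_0)=1$. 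Once it is shown that the first return time $k(\xi_S(y))$ is constant on each interval $[y_{n+1}, y_n)$, the piecewise-affinity claim follows: on such an interval $F^{k(\xi_S(y_n))}(\xi_S(y))$ depends on $y$ only through $F(\xi_S(y))$ (subsequent iterates being horizontal translations), and $F(\xi_S(y))$ moves along $L''_S$ linearly in $y$ with slope $\mu^{-1}$ in its real part, while $s^{-1}$ contributes a compensating $-\ell(y)/2=-y/\nu$ term.

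I would then run an induction on $n$. For the base case, $\xi(y_0)=1 \in \overline{I_c(\ell(y_0))}$ gives $r'_{\ell(y_0)}(\xi(y_0))=1$, so $\rho(y_0) = -\ell(y_0)/2 + iy_0$, which lies on $L'_d$ as $\nu=\tan\beta$. For $y$ just below $y_0$, write $\xi(y) = 1 - \ell'$ with $\ell' := (y_0-y)(\mu+\nu)/(\mu\nu)$; by the defining property of $\kappa_0$ one has $\Upsilon_0 = \Gamma''_{\kappa_0}$ and $\Gamma''_{\kappa_0-1} \geq \ell(y_0) > \ell(y)$, so Lemma \ref{l1}(ii) with $n_d = \kappa_0-1$ applies as long as $\ell' \leq \Upsilon_0$, yielding $r_{\ell(y)}(1-\ell') = 1-\ell'+\Upsilon_0$ and hence formula \eqref{a5}. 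The threshold $\ell' = \Upsilon_0$ is reached at $y = y_0 - \Upsilon_0\mu\nu/(\mu+\nu)$, which a short calculation (using $\ell(y_0)=C\eta$ and $p_0 = \Upsilon_0/\ell(y_0)$) identifies with $(1-Cp_0)y_0 = y_1$. The inductive step is analogous: assuming the statement through index $n$, $\rho(y_n)$ equals $s^{-1}(1)+iy_n$ or $s^{-1}(1+\ell(y_n))+iy_n$ depending on whether $p_{n-1}<1/C$ or $p_{n-1}>1/C$; applying Lemma \ref{l1}(ii) or (i) respectively on the next subinterval with threshold $\Upsilon_{n+1}$ yields formula \eqref{a5} or \eqref{a4} and identifies the next discontinuity as $y_{n+1}$ as prescribed by \eqref{fdren5}. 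Whether $\xi(y_{n+1})$ equals $1$ or $1+\ell(y_{n+1})$ at the threshold matches exactly the sign of $p_n-1/C$ in the definition of $p_{n+1}$, giving the iff.

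The main obstacle is the bookkeeping that aligns the indices $\kappa_{n+1}$ and thresholds $\Upsilon_{n+1}$ from Section \ref{Dynamical sequences} with the indices $n_1$, $n_d$ of Lemma \ref{l1}, across the three regimes of \eqref{ren1}. When $\ell(y_{n+1}) > \Gamma'_0$ (so that the iterated trajectory performs several full translations by $-1$ or $+\lambda$ before entering the semiconvergent regime), the piecewise expression for $\Upsilon_{n+1}$ in \eqref{ren1} must be interpreted via Lemma \ref{dlemma} rather than via Lemma \ref{l1} directly. Likewise, one must verify that the interval on which each formula \eqref{a4}/\eqref{a5} is valid coincides exactly with $[y_{n+1}, y_n)$ and hides no intermediate discontinuity; this is the delicate step and requires a careful use of Lemma \ref{l2} to propagate constancy of the hitting time as $\ell(y)$ varies.
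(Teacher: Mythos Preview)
Your plan is essentially the paper's own route: translate via Lemma~\ref{prop2.4} to the interval map, use Lemma~\ref{l1} to compute $r'_{\ell(y)}$ near each $y_n$, and use Lemma~\ref{l2} to push the resulting formula down to $y_{n+1}$; the induction and the key lemmas match exactly.

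Two places deserve tightening. First, your base-case threshold ``$\ell'=\Upsilon_0$, hence $y_1=(1-Cp_0)y_0$'' is only the correct $y_1$ when $p_0<1/C$. When $p_0>1/C$ the image $1-\ell'+\Upsilon_0$ meets $1+\ell(y)$ (the $L'_1$ side) \emph{before} $\ell'$ reaches $\Upsilon_0$, and the threshold is instead $(1-D(1-p_0))y_0$; this two-sided exit is exactly what the paper treats via the three cases of \eqref{a28}. You clearly know both cases from your inductive-step sentence, but the base case as written asserts the wrong $y_1$ in the regime $p_0>1/C$.

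Second, Lemma~\ref{l1}(ii) requires not only $\ell'\le\Upsilon_0$ but also $\Upsilon_0\le\ell(y)$, and the latter can fail on $[y_1,y_0)$ well before the former. The paper handles this by invoking Lemma~\ref{l1} only on a short interval $[\tilde y,y_n)$ where all hypotheses hold, and then invoking Lemma~\ref{l2} (with the quantity $d^-$ of \eqref{ndm}) to extend the identity $F^{k(\xi_S(\tilde y))}(\xi_S(y))=\rho_n(y)\pm\Upsilon_n$ to all $y<y_n$; only afterwards does one check membership in $P_c$ to pin down $y_{n+1}$. Your final paragraph gestures at this, but the body of the argument should be organized in that order rather than treating Lemma~\ref{l1} as valid all the way down.
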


\begin{proof}
	We begin by proving, by induction on $n$, that for all $n \in \mathbf{N}$
	\begin{equation}\label{fd12}
	\textrm{card}\left\{\mathcal{D} \cap \{y \in \R^+:y>y_n\}  \right\}=n,
	\end{equation}
	$\rho(y_n) \in L_1'\cup L_d'$ and that for all $y < y_n$, we have
	\begin{equation}\label{b1}
	k(\xi_S(y))>k(\xi_S(y_n)).
	\end{equation} 
	For all $n \in \mathbf{N}$, we prove that the map $\rho_n: [0,y_n)\rightarrow \mathbb{H}$ such that 
	\begin{equation}\label{a1}
	\rho_n(y)=F^{k(\xi_S(y_n))}(\xi_S(y)),
	\end{equation} 
	is an affine map of slope $\mu^{-1}$. Furthermore, if $\rho(y_n) \in L_1'$ (resp. $\rho(y_n) \in L_d'$) then for all $y<y_n$ we have
	\begin{equation}\label{a2}
	F^{k(\xi_S(y'))}(\xi_S(y))=\rho_n(y)-\Upsilon_{n}
	\end{equation}
	\begin{equation}\label{a3}
	\left( \textrm{resp.} \ F^{k(\xi_S(y'))}(\xi_S(y))=\rho_n(y)+\Upsilon_{n}\right),
	\end{equation}
	where $y'=y_{n+1}$ if $n+1 \in \mathbf{N}$ and $y'=y_{n}/2$ otherwise. For $y_{n+1}\leq y < y_n$ we have \eqref{a4} (resp. \eqref{a5}).\\
	
	We first show that for $n=0$ we have \eqref{fd12}, \eqref{b1}, $\rho(y_0) \in L'_d$ and that $\rho_0$ is an affine map of slope $\mu^{-1}$.

	Note that for all $y \geq 0$ we have 
	\begin{equation}\label{b2}
	F(\xi_S(y))=(\mu^{-1}+i)y-\left(\frac{1}{\mu}+\frac{1}{\nu}\right)y_0,
	\end{equation}
	which is an affine map of slope $\mu^{-1}$. By \eqref{ren66} we have that $\rho(y_0) \in L'_d$ and thus $y_0 \in \mathcal{D}$.

	As $L_S''\cap \{z \in \mathbb{H}: \Im(z)>y_0\} \subseteq P_c$,  for $y>y_0$ we have \eqref{fd12}  and $$\rho(y)=F(\xi_S(y)).$$
	
	Thus by \eqref{b2} we have that $\rho_0$ is an affine map of slope $\mu^{-1}$.  Note that for $y<y_0$ we have $F(\xi_S(y))\in P_{d+1}$ and thus we have \eqref{b1} as well.\\

	It is clear that if $p_0>1/C$ (resp. $p_0<1/C$) then $\rho(y_1) \in L_1'$ (resp. $\rho(y_1) \in L_d'$).
	Now assume, for $n \in \mathbf{N}$ that $\rho(y_n) \in L_1'$ (resp. $\rho(y_n) \in L_d'$),  $p_{n-1}>1/C$ (resp. $p_{n-1}<1/C$), that \eqref{fd12} and \eqref{b1} are true and $\rho_n$ is an affine map of slope $\mu^{-1}$. We show  that \eqref{fd12} and \eqref{b1} hold for $n+1$.  If $\rho(y_n) \in L_1'$ (resp. $\rho(y_n) \in L_d'$) then for all $y<y_n$ we have \eqref{a2} (resp. \eqref{a3}) and for $y_{n+1}\leq y < y_n$ we have \eqref{a4} (resp. \eqref{a5}).
	In particular if $y_{n+1}>0$ then $\rho_{n+1}$ is an affine map of slope $\mu^{-1}$ and $\rho(y_{n+1}) \in  L'_1 \cup L_d'$.

	Assume that $\rho(y_n) \in L_1'$. We begin by proving that there is  $\tilde{y}<y_{n}$ such that for $\tilde{y}\leq y<y_{n}$ we have $\rho(y)= F^{k(\xi_S(\tilde{y}))}(\xi_S(y))$ and \eqref{a4}.
	
	Since $\rho_n$ is an affine map of slope $\mu^{-1}$ and $\rho(y_{n})\in L_1'$, for $y<y_{n}$ we have 
	\begin{equation}\label{a7}
	\rho_n(y)=y_{n}\left(\frac{1}{\nu}-\frac{1}{\mu}\right)+\frac{1}{\mu}y+iy.
	\end{equation}

	We now consider that $\ell(y_n)\leq \Gamma'_0$. As in the other case the proof is similar we will omit it for brevity. By the definition of $\kappa_n$ we have
	\begin{equation}\label{a6}
	\Gamma'_{\kappa_n}<\ell(y_{n})\leq \Gamma'_{\kappa_n-1}.
	\end{equation}
	
	As $\Gamma'_0\leq \lambda$ we get $\ell(y_n)\leq \lambda$, hence by the definition of $\ell$ we get \eqref{eqr2} for $z=\xi_S(y)$ and combining Lemma \ref{prop2.4} with $\rho(y)=R(\xi_S(y))$, by \eqref{b1} and \eqref{a1} we get
	\begin{equation}\label{a11}
	\Re(\rho(y))=s^{-1}\circ r_{\ell(y)}' \left( 1+ \frac{\ell(y)}{2} +  \Re(\rho_n(y))  \right).
	\end{equation}

	Recall the sequence $\{\Upsilon_n\}_{n \in \mathbf{N}}$ as in \eqref{ren1}. Take $0<\delta'<\Upsilon_n$ and
	\begin{equation*}\label{a8}
	\tilde{y}=\max\left(y_{n}-\left(\frac{1}{\nu}-\frac{1}{\mu}\right)^{-1}\delta',\frac{\nu \Upsilon_n}{2}\right).
	\end{equation*}
	Note that we have $\tilde{y}<y_{n}$, since by \eqref{ren121} and \eqref{a6}, we have $\frac{\nu \Gamma_{\kappa_n}'}{2}<y_n$ and as $|\mu|>\nu$ we also have $\left(1/\nu-1/\mu\right)^{-1}>0$.

	We now show that for $\tilde{y}\leq y<y_{n}$ we have
	\begin{equation}\label{a12}
	\Gamma_{\kappa_n}'\leq \ell(y)<\frac{\ell(y)}{2}+ \Re(\rho_n(y))< \ell(y)+\delta \leq \Gamma_{\kappa_n-1}',
	\end{equation}
	with
	\begin{equation}\label{a13}
	\delta=\max(\Gamma_{\kappa_n-1}'-\ell(y),\Gamma_{\kappa_n}').
	\end{equation}
	
	First note that as $y \geq \tilde{y}\geq \nu \Gamma_{\kappa_n}'$ we have $\Gamma_{\kappa_n}'\leq \ell(y)$.
	As $\rho_n(y) \in P_0$ we have $\Re(\rho_n(y))>\ell(y)/2$ and thus $\ell(y)<\ell(y)/2+ \Re(\rho_n(y))$.
	
	By \eqref{a7} and the definition of $\ell$ we have
	\begin{equation}\label{a16}
	\frac{\ell(y)}{2}+\Re(\rho_n(y))= \ell(y)+ \left(\frac{1}{\nu}-\frac{1}{\mu}\right)(y_{n}-y).
	\end{equation}	
	As $|\mu|>\nu$ we have $(1/\nu+1/\mu)>0$, thus, as $y<y_n$ we get that
	$\ell(y)/2+\Re(\rho_n(y))<2y_{n}/\nu$,	which combined with \eqref{a6} and \eqref{ren121} shows that
	\begin{equation*}\label{a15}
	\frac{\ell(y)}{2}+\Re(\rho_n(y))<\ell(y)+(\Gamma_{\kappa_n}'-\ell(y)).
	\end{equation*}
	Since $\delta'<\Gamma_{\kappa_n}'$ we have $y \geq \tilde{y}>y_{n}-(1/\nu-1/\mu)^{-1}\Gamma_{\kappa_n+1}'$ and from \eqref{a13} and \eqref{a16} we get
	$$\frac{\ell(y)}{2}+ \Re(\rho_n(y))< \ell(y)+\delta.$$
	Finally note that if $\ell(y)>\Gamma_{\kappa_n-1}'-\Gamma_{\kappa_n}'$ then $\ell(y)+\delta=\Gamma_{\kappa_n-1}'$ and if $\ell(y)\leq\Gamma_{\kappa_n-1}'-\Gamma_{\kappa_n}'$ then
	$$ \ell(y)+\delta=\ell(y)+\Gamma_{\kappa_n-1}'\leq  \Gamma_{\kappa_n-1}'.$$
	This shows that \eqref{a12} holds true.
	
	Therefore the conditions for applying Lemma \ref{l1} i) are satisfied. With $\ell=\ell(y)$ and $\ell'=\ell(y)/2 +  \Re(\rho_n(y))$  we get
	\begin{equation*}\label{a18}
	r_{\ell(y)}' \left( 1+ \frac{\ell(y)}{2} +  \Re(\rho_n(y))  \right)=1+ \frac{\ell(y)}{2}+\Re(\rho_n(y)) -\Gamma_{\kappa_n}',
	\end{equation*}
	and $n_{\ell(y)}(1+ \ell(y)/2 +  \Re(\rho_n(y)))=k_{\kappa_n}'$.
	
	Combining this with \eqref{a11} and noting that $\Im(\rho(y))=\Im(\rho_n(y))=y$ we get \eqref{a4} for $y \in [\tilde{y},y_{n})$.
	Since $k(\xi_S(y))=n_{\ell(y)}(1+ \ell(y)/2 +  \Re(\rho_n(y)))+1$ we get that for $y \in [\tilde{y},y_{n})$, $k(\xi_S(y))=k_{\kappa_n}'+1$, and thus $k(\xi_S(\tilde{y}))=k(\xi_S(y))$ and $\rho(y)= F^{k(\xi_S(\tilde{y}))}(\xi_S(y))$.\\

	Denote 
	$$d^{-}=d^-\left(1+\frac{\ell(\tilde{y})}{2}+\Re(F(\xi_S(\tilde{y}))), n_{\ell(\tilde{y})}\left(1+\frac{\ell(\tilde{y})}{2}+\Re(F(\xi_S(\tilde{y})))\right)\right),$$
	and let 
	\begin{equation*}\label{b3}
	\Delta(y,\tilde{y})=\frac{\ell(y)}{2}+\Re(F(\xi_S(y)))-\frac{\ell(\tilde{y})}{2}-\Re(F(\xi_S(\tilde{y}))).
	\end{equation*}
	we will show that
	\begin{equation}\label{a23}
	F^{k(\xi_S(\tilde{y}))}(\xi_S(y))=\rho_n(y)-\Upsilon_n,
	\end{equation}
	for all $y < y_n$.

	Let us first prove \eqref{a23} for all $y<y_{n}$. Since it holds for $y \in [\tilde{y},y_{n})$, we are left to prove it for $y<\tilde{y}$.
	
	Note first that by \eqref{b2}, we have
	\begin{equation*}\label{a20}
	\Delta(y,\tilde{y})=\left(\frac{1}{\nu}+\frac{1}{\mu}\right)^{-1}(y-\tilde{y})<0,
	\end{equation*}
	and since $d^-\geq 0$ we have $\Delta(y,\tilde{y})<d^-.$
	Combining this with \eqref{ren121}, we get for  $y<\tilde{y}$, 
	\begin{equation*}\label{a19}
	-(\ell(\tilde{y})-\ell(y))<\Delta(y,\tilde{y})<d^-.
	\end{equation*}	
	From these inequalities and Lemma \ref{l2} we get that for $n\leq n_{\ell(\tilde{y})}(1+ \ell(\tilde{y})/2 +  \Re(F(\xi_S(\tilde{y}))))$
	\begin{equation}\label{a24}
	g_{\ell(y)}^{n}\left(1+ \frac{\ell(y)}{2}+\Re(F(\xi_S(y)))    \right)=g_{\ell(\tilde{y})}^{n}\left(1+ \frac{\ell(\tilde{y})}{2}+\Re(F(\xi_S(\tilde{y})))    \right)+ \Delta(y,\tilde{y}).
	\end{equation}
	
	Recalling that $n_{\ell(\tilde{y})}(1+ \ell(\tilde{y})/2 +  \Re(F(\xi_S(\tilde{y}))))=k(\xi_S(\tilde{y}))-1$ by Lemma \ref{prop2.4}, $F(\xi_S(\tilde{y}))) \in \mathcal{R}_{\lambda,\beta}$ and we have
	\begin{equation*}\label{a25}
	s^{-1}\circ g_{\ell(\tilde{y})}^{k(\xi_S(\tilde{y}))-1}\left(1+ \frac{\ell(\tilde{y})}{2}+\Re(F(\xi_S(\tilde{y})))    \right)=\Re(\rho(\tilde{y})).
	\end{equation*}
	
	By Lemma \ref{cor2.3}, combining  the previous identity with \eqref{a24} gives
	\begin{equation*}\label{a26}
	F^{k(\xi_S(\tilde{y}))}(\xi_S(y)) = \Re(\rho(\tilde{y})) - \frac{1}{\mu}(\tilde{y}-y)+iy,  
	\end{equation*}
	and since \eqref{a4} holds true for $y=\tilde{y}$, by \eqref{a7} we also have
	\begin{equation*}\label{a27}
	\Re(\rho(\tilde{y}))=y_{n}\left( \frac{1}{\nu} - \frac{1}{\mu}   \right) + \frac{1}{\mu}\tilde{y}- \Gamma_{\kappa_n}'.
	\end{equation*}
	Combining the two expressions above and  \eqref{a7} we get
	$
		F^{k(\xi_S(\tilde{y}))}(\xi_S(y))=\rho_n(y)-\Gamma_{\kappa_n}',
	$
	which together with \eqref{ren1} gives \eqref{a23} as intended.

	We now prove that for all $y_{n+1} \leq y < y_{n}$,
	\begin{equation}\label{a29}
	k(\xi_S(y))= k(\xi_S(\tilde{y})).
	\end{equation}
	
	By Lemma \ref{l2}, 
	$  n_{\ell(\tilde{y})}(1+ \ell(\tilde{y})/2 +  \Re(F(\xi_S(\tilde{y})))) \leq n_{\ell(y)}(1+ \ell(y)/2 +  \Re(F(\xi_S(y)))),  $
	for $y \leq \tilde{y}$,  thus $k(\xi_S(y))\geq k(\xi_S(\tilde{y})).$ 
	
	For all $y \in [\tilde{y},y_{n})$, since  $k(\xi_S(y))= k(\xi_S(\tilde{y}))$, to prove  \eqref{a29} for  $y_{n+1} \leq y < y_{n}$ it is enough instead to show that  
	\begin{equation}\label{aextra}
	F^{k(\xi_S(\tilde{y}))}(\xi_S(y))\in P_c.
	\end{equation}

	Begin by noting that by \eqref{a23} we have
	\begin{equation}\label{a30}
	F^{k(\xi_S(\tilde{y}))}(\xi_S(y)) = y_{n} \left( \frac{1}{\nu} -\frac{1}{\mu}   \right) + \frac{1}{\mu} y -\Gamma_{\kappa_n}' +i y.
	\end{equation}

	Combining \eqref{CandD} with the definitions of $y_{n+1}$, $\kappa_{n+1}$, $\Upsilon_{n+1}$ and $p_{n+1}$, we get 
	\begin{equation}\label{a28}
	y_{n+1}=\left\{\begin{array}{ll}\vspace{0.2cm}
	0 \ , & y_{n}=\left( \frac{1}{\nu} -\frac{1}{\mu}   \right)^{-1}\Gamma_{\kappa_n}',\\ \vspace{0.2cm}
	y_{n}-\left( \frac{1}{\nu} -\frac{1}{\mu}   \right)^{-1}\Gamma_{\kappa_n}' \ , & y_{n}>\left( \frac{1}{\nu} -\frac{1}{\mu}   \right)^{-1}\Gamma_{\kappa_n}',\\ 
	\left( \frac{1}{\nu} +\frac{1}{\mu}   \right)^{-1}\Gamma_{\kappa_n}' -  \dfrac{\mu-\nu}{\mu+\nu} y_{n}\ , & y_{n}<\left( \frac{1}{\nu} -\frac{1}{\mu}   \right)^{-1}\Gamma_{\kappa_n}'.\\
	\end{array}\right.
	\end{equation}

	It is clear from \eqref{a28} and using $|\mu|>\nu$ that $y_{n+1}>0$ if $y_{n}\neq ( 1/\nu - 1/\mu   )^{-1}\Gamma_{\kappa_n}'$ and $y_{n+1}=0$ otherwise. 
	
	We consider the three separate cases in \eqref{a28}.
	
	If $y_{n} = ( 1/\nu - 1/\mu   )^{-1}\Gamma_{\kappa_n}'$, by \eqref{a30} we have
	$ F^{k(\xi_S(\tilde{y}))}(\xi_S(y)) = y/ \mu + i y ,$
	which, since $|\mu|>\nu$ proves \eqref{aextra}.

	If $y_{n}> ( 1/\nu - 1/\mu   )^{-1}\Gamma_{\kappa_n}'$ it follows from \eqref{a30} and  $|\mu|>\nu$ that $  -y / \nu< \Re(  F^{k(\xi_S(\tilde{y}))}(\xi_S(y))  ), $
	also it follows from \eqref{a30} that
	\begin{equation*}\label{a31}
	\Re(  F^{k(\xi_S(\tilde{y}))}(\xi_S(y))  ) = (y_{n}-y) \left( \frac{1}{\nu} -\frac{1}{\mu}   \right)-\Gamma_{\kappa_n}' + \frac{1}{\nu}y,
	\end{equation*}
	and since $y\geq y_{n+1}$, we get from \eqref{a28} that
	$ \Re(  F^{k(\xi_S(\tilde{y}))}(\xi_S(y))  ) \leq y / \nu,$
	proving \eqref{aextra} in this case.
	
	Finally, if  $y_{n}< ( 1/\nu - 1/\mu   )^{-1}\Gamma_{\kappa_n}'$, it follows from \eqref{a30} and  $|\mu|>\nu$ that\\
	$  \Re(  F^{k(\xi_S(\tilde{y}))}(\xi_S(y))  )  < y / \nu , $
	and from \eqref{a30} that
	\begin{equation*}\label{a32}
	\Re(  F^{k(\xi_S(\tilde{y}))}(\xi_S(y))  ) = y_{n} \left( \frac{1}{\nu} -\frac{1}{\mu}   \right)+  y \left( \frac{1}{\nu} +\frac{1}{\mu}   \right)  -\Gamma_{\kappa_n}' - \frac{1}{\nu}y.
	\end{equation*}
	Since $y\geq y_{n+1}$, we get from the above expression and \eqref{a28} that
	$$ \Re(  F^{k(\xi_S(\tilde{y}))}(\xi_S(y))  ) \geq -y / \nu,$$
	and thus \eqref{aextra}. 
	
	This shows that for all $y_{n+1} \leq y < y_{n}$ we have \eqref{a29}.\\

	From \eqref{a29} it follows that \eqref{fd12} holds for $n+1$. It also follows that $F^{k(\xi_S(\tilde{y}))}(\xi_S(y))=F^{k(\xi_S(y'))}(\xi_S(y))$, hence by \eqref{a23} we have that \eqref{a2} holds for all $y<y_{n}$. Also from \eqref{a29} it follows that for all  $y_{n+1} \leq y < y_{n} $, $\rho(y)=F^{k(\xi_S(y'))}(\xi_S(y))$ and thus from \eqref{a2} we get \eqref{a4} as well.

	Finally note that if $y_{n+1}>0$, then $y'=y_{n+1}$ and hence by \eqref{a4} $\rho_{n+1}$ is an affine map of slope $\mu^{-1}$. As $\rho(y_n) \in L_1'$ we have $p_{n-1}>1/C$, hence by \eqref{a28} and the definitions of $y_n$ and $p_n$ it is straightforward to check that $\rho(y_{n+1}) \in L_1'$ (resp. $\rho(y_{n+1}) \in L_d'$) if and only if $y_{n}>\left( \nu^{-1} -\mu^{-1}   \right)^{-1}\Gamma_{\kappa_n}'$ (resp. $y_{n}<\left( \nu^{-1} -\mu^{-1}   \right)^{-1}\Gamma_{\kappa_n}'$) if and only if $p_n>1/C$ (resp. $p_n<1/C$).

The proof for the case  $\rho(y_{n}) \in L_d'$ is similar to the previous one and so we omit it.

	By  \eqref{a4}, \eqref{a5} and \eqref{b2} we get that $\rho(y)$ is a an affine map of slope $\mu^{-1}$ for all $y_{n+1}\leq y <y_n$, $n \in \mathbf{N}$, hence by Lemma \ref{lemmayn} it is a picewise affine map in $[0,y_0]$. Also by Lemma \ref{lemmayn} and \eqref{fd12} it follows that the set of discontinuities $\mathcal{D}$ is equal to the union of all $\{y_n\}_{n \in \mathbf{N}}$.
\end{proof}


\section{Proof of Theorems \ref{renormtheorem} and \ref{infembed}}\label{Proofs of Theorems}
In this section we prove our main results, theorems \ref{renormtheorem} and \ref{infembed}.

Set $x_n(\mu)=\Re(\rho(y_n^-(\mu)))$. By Theorem \ref{t5.2} and by the definition of $\Upsilon_{n}$, for all $n \in \mathbf{N}$, we have
\begin{equation*}\label{ren1a}
\rho(y_n^-(\mu))=\left\{\begin{array}{ll}
\dfrac{y_n(\mu)}{\nu}-\Upsilon_{n}(\mu) + i y_n(\mu), & \rho(y_n(\mu)) \in L'_1, \\
\Upsilon_{n}(\mu) - \dfrac{y_n(\mu)}{\nu} + i y_n(\mu), & \rho(y_n(\mu)) \in L'_d,
\end{array}\right.
\end{equation*}
which by the definitions of $\ell$ and $p_n$ gives
\begin{equation}\label{ren120}
p_n(\mu)=\frac{x_n(\mu)}{\ell(y_n(\mu))}+\frac{1}{2}, \ \textrm{for \ all} \ n \in \mathbf{N}.
\end{equation}

\subsection{Proof of Theorem \ref{renormtheorem}}

 Let $\{y_n(\mu)\}$ be the sequence associated to $L''_S(\mu)$. Recall that by \eqref{beta} we have $\beta=(\pi-|\alpha|)/2$. 
 
 We begin by proving that there is a positive real number $\bar{y_1}$ such that, for all $\mu$ satisfying $|\mu|>\tan(\beta)=\nu$, we have $y_1(\mu)\geq \bar{y_1}$. 
 Let $\varphi$, $\varphi' \in [\beta,\pi-\beta]$ be such that
 \begin{equation}\label{ren85}
 \mu=\tan(\varphi) \quad \textrm{and} \quad \mu'=\tan(\varphi').
 \end{equation}
 Let $L'(\mu')\subseteq P_j$, we define
 \begin{equation}\label{ren82}
 \gamma_j(\varphi)=
 \left|\cos(\theta_j)-\sin(\theta_j)\cot(\varphi)\right|^{-1},
 \end{equation}
 and
 \begin{equation*}\label{ren83}
 \gamma_j'(\varphi')=
 \left|\cos(\theta_j)-\sin(\theta_j)\cot(\varphi')\right|,
 \end{equation*}
 where $\theta_j = \theta_j(\alpha,\tau)$. By the definition of $\mu'$ we can see that
 \begin{equation}\label{ren118}
 \gamma(\mu,\mu')=\gamma_j(\varphi)=\gamma_j'(\varphi').
 \end{equation}
 Recall from \eqref{ren66} that
 \begin{equation*}
 y_0(\mu)=\eta\frac{\mu \nu}{\mu+\nu}.
 \end{equation*}
 Hence using \eqref{ren82}, we have
 \begin{equation}\label{ren100}
 y_0(\tan(\varphi))\gamma_j(\varphi)^{-1} = \eta \nu\left|\cos(\theta_j)\right| \left| \frac{1-\tan(\theta_j)\cot(\varphi)}{1+\nu\cot(\varphi)}\right|.
 \end{equation}
 
 Let
 \begin{equation}\label{ren84}
 \bar{y_0}=\min_{j \in \{1,...,d\}}\left\{\inf_{\varphi \in W_j}\left\{y_0(\tan(\varphi))\gamma_j(\varphi)\right\}\right\}.
 \end{equation}
 Fix $j \in J=\{1\leq j \leq d: \theta_{j}=\pi/2    \}$. By \eqref{ren100}, if $\varphi \neq \pi/2$, we have
 \begin{equation*}\label{ren103}
 y_0(\tan(\varphi))\gamma_j(\varphi) =\eta \nu\left| \frac{\cot(\varphi)}{1+\nu\cot(\varphi)}\right|>0.
 \end{equation*}
 
 We now show that $\pi/2 \notin W_j$. Assume that $\varphi=\pi/2 \in W_j$.
 Note that from the definition of $L_S'$ and \eqref{ren85} we get
 $
 \varphi'=\varphi-\theta_j.
 $
 Therefore, since $\theta_j=\pi/2$, we have $\varphi'=0$, which is impossible since $\nu=\tan(\beta)>0$ and $\varphi'\in [\beta,\pi-\beta]$. 
 Thus, we get
  \begin{equation*}\label{ren101}
 \bar{y_0}=\min_{j \notin J}\left\{\inf_{\varphi \in W_j}\left\{y_0(\tan(\varphi))\gamma_j(\varphi)\right\}\right\}.
 \end{equation*}

 Now fix $j \in \{1,...,d\}$. Since $\varphi' \in [\beta, \pi-\beta]$ we have $\varphi'>\arctan(\nu)$, and thus, since $\varphi'=\varphi-\theta_j$, we have $\varphi-\theta_j>\arctan(\nu).$ Thus, $\varphi$ is bounded away from $\theta_j$ and this bound depends only on $\nu$. Therefore $\tan(\theta_j)\neq \tan(\varphi)$ and thus there is  $\tilde{c}(\nu,j)>0$ such that
 \begin{equation*}\label{ren106}
 |1-\tan(\theta_j)\cot(\varphi)|>\tilde{c}(\nu,j).
 \end{equation*}
 Since $\varphi \in [\arctan(\nu), \pi-\arctan(\nu)]$ we have $|\nu \cot(\varphi)\leq 1|$, thus we also have $|1+\nu\cot(\varphi)|\leq 2$. From this and the above inequality we get
\begin{equation*}\label{ren107}
\eta \nu \left|\cos(\theta_j)\right| \left| \frac{1-\tan(\theta_j)\cot(\varphi)}{1+\nu\cot(\varphi)}\right|\geq \frac{\eta \nu}{2}\tilde{c}(\nu,j)\left|\cos(\theta_j)\right|>0.
\end{equation*}
Combining this with \eqref{ren100} and \eqref{ren84} we get
\begin{equation*}
\bar{y_0}\geq \min_{j \in \{1,...,n\}}\left\{\frac{1}{2}\tilde{c}(\nu,j)\left|\cos(\theta_j)\right|\right\}>0.
\end{equation*}
Thus, for all $\nu>0$, we have $\bar{y_0}>0$.

Note that from \eqref{ren85} and the definitions of $C$ and $D$, we can write $\mathfrak{D}(\varphi)=D/C$ as a function of $\varphi$ as
\begin{equation*}\label{ren111}
\mathfrak{D}(\varphi)=\frac{1+\nu \cot(\varphi)}{1-\nu \cot(\varphi)}.
\end{equation*}
Define the interval $W^{\varphi}=[\arctan(\nu),\pi-\arctan(\bar{\mu})]$. 
Note that $\mathfrak{D}(\varphi)$ is  a positive, continuous and decreasing function of $\varphi \in W^{\varphi}$. Since $\varphi \leq \pi - \arctan(\bar{\mu})$, we have
\begin{equation*}\label{ren113}
\mathfrak{D}(\varphi)\geq \frac{1+\nu (-\Phi^3/\nu)}{1-\nu (-\Phi^3/\nu)} =\frac{1-\Phi^3}{1+\Phi^3} = \Phi,
\end{equation*}
since $\Phi^2=1-\Phi$. Thus we obtain
\begin{equation}\label{ren112}
\inf_{\varphi \in W^{\varphi}}\mathfrak{D}(\varphi)\geq \Phi.
\end{equation}

It follows from Theorem \ref{rencor} that  $y_1=y_0 D \Phi /C$ if $\mu \geq -\bar{\mu}$ and $y_1=y_0 \Phi^2$ if $\mu < -\bar{\mu}$. This implies that for all $\varphi \in W^{\varphi}$, we have that
\begin{equation*}\label{ren114}
y_1(\tan(\varphi))=
\left\{\begin{array}{ll}
\Phi^2 y_0(\tan(\varphi)), & \mu < \bar{\mu},\\
\mathfrak{D}(\varphi)y_0(\tan(\varphi)) & \mu \geq \bar{\mu}.
\end{array}
\right.
\end{equation*}
By \eqref{ren84} this gives
\begin{equation*}\label{ren116}
\min_{j \in \{1,...,d\}}\left\{\inf_{\varphi \in W_j}\left\{y_1(\tan(\varphi))\gamma_j(\varphi)\right\}\right\}\geq  \min\left(\Phi^2,\inf_{\varphi \in W^{\varphi}}\mathfrak{D}(\varphi)\right) \bar{y_0}  
\end{equation*}

Define $\bar{y_1}=\Phi^2 \bar{y_0}$. Note that since $\bar{y_0}>0$, we have $\bar{y_1}>0$ as well. 
From the above inequality and \eqref{ren112} we get 
\begin{equation}\label{ren117}
y_1(\mu)\geq \min_{j \in \{1,...,d\}}\left\{\inf_{\varphi \in W_j}\left\{y_1(\tan(\varphi))\gamma_j(\varphi)\right\}\right\}\geq \bar{y_1}.
\end{equation}


Define $U=\{z\in P_c: \Im(z)<\bar{y_1}  \}$. We now prove \eqref{ren50}  for $z \in U$.
Let $\mu'$ be such that $z \in L'_S(\mu')$, then $\Phi^2 z \in L'_S(\mu ')$, hence by the definition of $\gamma(\mu,\mu')$ and as $R(z)= \rho(\Im(F(z)))$ we have 
\begin{equation}\label{ren51}
 \frac{1}{\Phi^2}R(\Phi^2 z)=\frac{1}{\Phi^2} \rho(\gamma(\mu,\mu') y \Phi^2),
 \end{equation}
Set $y' = \gamma(\mu,\mu') y$. 
From \eqref{ren118} and \eqref{ren117} we have
\begin{equation}\label{ren110}
y_1(\mu)=\gamma(\mu,\mu')\gamma_j(\mu)^{-1}y_1(\mu)\geq \gamma(\mu,\mu')\bar{y_1},
\end{equation}
for $j$ such that $(x,y)\in P_j$.
Since $\Im(\rho(y'))=y'$, by \eqref{ren51} and \eqref{ren110}, to prove \eqref{ren50} it is enough to prove that
\begin{equation}\label{ren108}
\Re(\rho(y' \Phi^2))=\Phi^2 \Re(\rho(y')),
\end{equation}
for $y' < y_1(\mu)$.
We prove \eqref{ren108} for  $y' < y_1(\mu)$. Recall that  $y_{1}=y_1(\mu)$. By \eqref{ren110}, there must be an $n\geq 1$, such that
\begin{equation}\label{ren122}
y_{n+1}(\mu)\leq y' < y_{n}(\mu).
\end{equation}
 Recall from Theorem \ref{t5.2} that $\rho(y')$ is a piecewise affine map of constant slope $\mu^{-1}$ and it is continuous if $y'$ satisfies \eqref{ren122}. From this we have 
  \begin{equation*}\label{ren123}
  \rho(y')=\rho(y_{n+1})-\frac{y_{n+1}-y'}{\mu},
  \end{equation*}
  and combining this with \eqref{ren123} and by the definition of $\ell$, we have
   \begin{equation}\label{ren124}
  \Re(\rho(y'))=(2p_n(\mu)-1)\frac{y_{n+1}(\mu)}{\nu}-\frac{y_{n+1}(\mu)-y'}{\mu}.
  \end{equation}
 Now multiplying \eqref{ren122} by $\Phi^2$ we get
 \begin{equation*}\label{ren125}
 y_{n+1}(\mu)\Phi^2 \leq y' \Phi^2 < y_n(\mu) \Phi^2,
 \end{equation*}
 thus by Theorem \ref{rencor} we have
 \begin{equation*}\label{ren53}
 \left\{\begin{array}{ll}
 y_{n+2}(\mu) \leq y' \Phi^2 < y_{n+1}(\mu) & , \ \textrm{if} \ |\mu| < \bar{\mu} \\
 y_{n+3}(\mu) \leq y' \Phi^2 < y_{n+2}(\mu) & , \ \textrm{if} \ |\mu| \geq \bar{\mu}. \\
 \end{array}\right.
 \end{equation*}
 By a similar argument to the used to prove \eqref{ren123}, from the above inequalities we get
 \begin{equation*}\label{ren54}
 \Re(\rho(y' \Phi^2))=\left\{\begin{array}{ll}
 (2p_{n+1}(\mu)-1)\dfrac{y_{n+2}(\mu)}{\nu}-\dfrac{y_{n+2}(\mu)-y'\Phi^2}{\mu} & , \ \textrm{if} \ |\mu| < \bar{\mu} \\
 (2p_{n+2}(\mu)-1)\dfrac{y_{n+3}(\mu)}{\nu}-\dfrac{y_{n+3}(\mu)-y'\Phi^2}{\mu} & , \ \textrm{if} \ |\mu| \geq \bar{\mu},
 \end{array}\right.
 \end{equation*}
applying Theorem \ref{rencor} to this expression gives
 \begin{equation*}\label{ren55}
\Re(\rho(y' \Phi^2)) = (2p_{n}(\mu)-1)\frac{y_{n+1}(\mu)\Phi^2}{\nu}-\frac{y_{n+1}(\mu)\Phi^2-y'\Phi^2}{\mu}.
 \end{equation*}
 Comparing this identity with \eqref{ren124} we get \eqref{ren108}.
This completes our proof.

\hfill\ensuremath{\square}\\

Recall our definition of first return map $R$ of $z \in P_c$ to the middle cone $P_c$.
Before proving  Theorem \ref{infembed} we need the following result showing that in the conditions of Theorem \ref{renormtheorem}, $R$ is a PWI with respect to a partition of countably many atoms.
\begin{theorem}\label{RPWI}
 For all $\alpha \in \mathbb{A}$, $\lambda= 1/(k+\Phi)$ and $\eta=1-k\lambda$ with $k \in \N$, $R$ is a piecewise isometry with respect to a partition of countably many atoms.
\end{theorem}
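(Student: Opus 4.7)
The plan is to construct an explicit countable partition $\mathcal{P}_R$ of the domain of $R$ by coding each orbit segment of $F$ from $z \in P_c$ until its first return to $P_c$ by its itinerary through the coarse partition $\mathcal{P} = \{P_0, P_1, \ldots, P_{d+1}\}$. For $z \in P_c$ with $k(z) = n < \infty$, set $\iota(z) := (j_0, j_1, \ldots, j_n)$, where $F^i(z) \in P_{j_i}$. The definitions of $P_c$ and of the first hitting time force $j_0, j_n \in \{1, \ldots, d\}$ and $j_i \in \{0, d+1\}$ for $1 \le i \le n-1$; hence the set $\Omega$ of realized itineraries is a subset of the countable set $\bigcup_{n \ge 1} \{1, \ldots, d\} \times \{0, d+1\}^{n-1} \times \{1, \ldots, d\}$. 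Let $A_\iota := \{z \in P_c : \iota(z) = \iota\}$ and take $\mathcal{P}_R := \{A_\iota : \iota \in \Omega,\, A_\iota \ne \emptyset\}$, which is an at most countable partition of the domain of $R$.

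On each atom $A_\iota$ the dynamics is an explicit composition of isometries: translation by $-1$ on $P_0$, translation by $+\lambda$ on $P_{d+1}$, and $w \mapsto e^{i\theta_j} w - \eta$ on $P_j$ for $j \in \{1, \ldots, d\}$. Since on $A_\iota$ the branch of $F$ applied at the $i$-th step is determined by $j_{i-1}$, the composition $R|_{A_\iota} = F^n|_{A_\iota}$ is a single Euclidean isometry. Moreover, each atom, being the intersection $P_{j_0} \cap \bigcap_{i=1}^n F^{-i}(P_{j_i})$ of a cone with preimages of cones under isometric branches of $F^i$, is a (possibly unbounded) polygonal region. This already shows that $R$ is a piecewise isometry with respect to $\mathcal{P}_R$.

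The main (and essentially only) subtlety is to verify that $\mathcal{P}_R$ is genuinely infinite, which is what justifies the word \emph{countably} in the statement. For this I would use Lemma \ref{prop2.4} to identify, for points $z \in P_c$ near the real axis, the first return time $k(z)$ with (a shift of) the first hitting time $n_\ell(\cdot)$ of the one-dimensional map $g_\ell$ studied in Sections \ref{Bifurcation Points} and \ref{Bifurcation sequence}. Since $\lambda = 1/(k+\Phi)$ is irrational, Theorems \ref{n7} and \ref{n9} imply that $n_\ell(\cdot)$ is unbounded as $\ell \to 0^+$; equivalently, Theorem \ref{t5.2} gives an infinite sequence of discontinuities $\{y_n\}_{n \in \mathbf{N}}$ of $\rho$ along each line $L'_S(\mu')$. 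Distinct values of $k(z)$ produce itineraries of distinct lengths, and hence distinct non-empty atoms of $\mathcal{P}_R$; so $\mathcal{P}_R$ is countably infinite, completing the proof.
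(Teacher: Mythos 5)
Your proposal is correct as a proof of the literal statement, but it follows a genuinely different route from the paper. You build the natural itinerary (first-return) partition explicitly, observe it is at most countable and that $R$ restricted to each atom is a composition of the isometric branches of $F$, and then obtain infinitude of the partition from the unboundedness of return times near the real axis via the one-dimensional reduction (Lemma \ref{prop2.4}, Theorems \ref{n9} and \ref{t5.2}). The paper instead argues by contradiction using renormalization: assuming $R$ were a PWI with respect to a partition into finitely many atoms, Theorem \ref{renormtheorem} applied on the nested annular sets $\tilde{U}_k=\Phi^{2(k-1)}\tilde{U}_0\setminus\Phi^{2k}\tilde{U}_0$ forces $R$ to act on $\tilde{U}_k$ with translation part $\Phi^{2k}\lambda_{j'}$, producing infinitely many distinct isometries and hence infinitely many atoms, a contradiction. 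The two arguments buy different things: yours is independent of Theorem \ref{renormtheorem} and produces an explicit partition $\mathcal{P}_R$ (convenient, since $\mathcal{P}_R$ is what the barrier condition later refers to), whereas the paper's argument establishes the stronger fact that \emph{no} finite partition into isometry pieces exists, which is the real content of the phrase ``countably many''; your argument only shows that your particular itinerary partition is infinite and leaves open, in principle, that $R$ could coincide with a PWI on a coarser finite partition. One detail you should pin down in your infinitude step: the unboundedness of the endpoint hitting times $n_{\ell}(1)$, $n_{\ell}(1+\ell)$ does not by itself produce points of $P_c$ realizing infinitely many return times, because the points of $F(P_c)$ at small heights sit at distance about $\eta$ from the middle interval rather than at its endpoints; what you need is that $k(\xi_S(\cdot))$ strictly increases across the discontinuities $y_n$ along a line $L'_S(\mu')$, which is established inside the proof of Theorem \ref{t5.2} (inequality \eqref{b1}), together with the fact that for $\lambda=1/(k+\Phi)$ Theorem \ref{rencor} guarantees $p_n\neq 1/C$ for all $n$, so the sequence $\{y_n\}$ is indeed infinite.
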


\begin{proof}
We begin by noting that $R$ is a PWI since it is the first return map under $F$ to $P_c$ which is a union of elements of the partition of $F$.
We now prove that the partition of $R$ has countably many atoms. Assume by contradiction that there is  $N \in \N$, a partition $\{Q_j\}_{j\in \{0,...,N-1\}}$ of $P_c$, and $\theta_j(\alpha,\tau)$,  $\lambda_j$ for $j\in \{0,...,N-1\}$ such that
\begin{equation*}\label{conseq1}
R(z) = e^{i \theta_j(\alpha,\tau)} z + \lambda_j , \quad z \in Q_j.
\end{equation*}

By Theorem \ref{renormtheorem} there is an open set $U$ of $P_c$, containing the origin, where $R$ is renormalizable. 
Consider the set $U'=U \backslash \Phi^2 U$ and take $j' \in \{0,...,N-1\}$ such that $U'\cap P_{j'} \neq \emptyset$. Since $\lambda$ and $\eta$ are irrational numbers, we have that
$R(z) = e^{i \theta_{j'}(\alpha,\tau)} z + \lambda_{j'}$ for $z \in U' \cap P_{j'}$,

Define the sequence $\{\tilde{U}_k\}_{k \geq 0}$, where
\begin{equation*}\label{conseq3}
\tilde{U}_0=U'\cap P_{j'} \quad \textrm{and} \quad \tilde{U}_k=\Phi^{2(k-1)}\tilde{U}_0 \backslash \Phi^{2 k}\tilde{U}_0, \ \textrm{for} \ k\geq 1.
\end{equation*}
For every $k \geq 0$  and all $z \in \tilde{U}_k$ we have that  $\Phi^{-2k} z \in \tilde{U}_0$. Since $\tilde{U}_k \subseteq U$, we can renormalize $R$, $k$ times to get
\begin{equation*}\label{conseq4}
R(z)= \Phi^{2k} R(\Phi^{-2k}z)= e^{i\theta_{j'}(\alpha,\tau)}z+\Phi^{2k}\lambda_{j'}.
\end{equation*}

Since $\lambda_{j'} \neq 0$, $\Phi^{2k}\lambda_{j'}$ takes countably many different values, hence for each $k$ there must be a $j_k$ such that for $z \in \tilde{U}_k$ we have $z \in P_{j_k}$ and $j_k\neq j_{k'}$ for $k \neq k'$. But $j_k \in \{0,...,N-1\}$ hence there must exist $k' \neq k''$ such that $j_{k'} = j_{k''}$, which is a contradiction. This finishes our proof.
\end{proof}

\subsection{Proof of Theorem \ref{infembed}}

We begin by proving that $P_c$ can be separated into two connected regions $C_b$ and $C_u$, which are forward invariant for $R$, such that $C_b$ is bounded and $C_u$ is unbounded.

By the proof of Theorem \ref{renormtheorem} there exists a $\overline{y_1}>0$ and an open set
\begin{equation}\label{U}
U=\{ z \in P_c: \Im(z) <  \overline{y_1} \},
\end{equation}
such that we have \eqref{ren50} for all $z \in U$.

Since $\lambda= 1/(k+\Phi)$ and $\eta=1-k\lambda$ with $k \in \N$, by Theorem \ref{RPWI}, $R$ is a PWI with respect to a partition of countably many atoms which we denote $\mathcal{P}_R$.
Furthermore, since $\alpha \in \mathfrak{A}(\lambda,\eta)$, there exist $d'\geq2$, $a \in \R_+^{d'}$, $\pi \in S(d')$ and  a continuous embedding $h$, of $f_{a,\pi}:I\rightarrow I$ into $R:P_c\rightarrow P_c$, such that $h(I)\subset \Phi^2 U$, $h(0)\in L_d'$, $h(|a|)\in L_1'$ and 
\begin{equation*}
\mathcal{B}=\{ P\in \mathcal{P}_R :P \cap h(I) \neq \emptyset   \},
\end{equation*}
is a barrier for $R$. Let
$$\mathfrak{L}_1=\left \{ z \in L_1': \Im(z)\leq \Im(h(|a|))    \right \}, \quad \mathfrak{L}_d=\left \{ z \in L_d': \Im(z)\leq \Im(h(0))    \right \}.$$

Since  $h(|a|)\in L_1'$ and $h(0)\in L_d'$ we have that $h(|a|)\in \mathfrak{L}_1$ and $h(0)\in \mathfrak{L}_d$ respectively. As $h$ is a homeomorphism of $I$, $\mathfrak{L}_1\cap h(I)= h(|a|)$ and $\mathfrak{L}_d\cap h(I)= h(0)$,  we have that $J=\mathfrak{L}_1\cup \mathfrak{L}_d \cup h(I)$ is homeomorphic to a circle, hence by the Jordan curve Theorem $\C \backslash J$ consists of two connected components, a bounded $C_b'$ and an unbounded $C_u'$.\\

Take $C_b=\overline{C_b'} \cap P_c$ and $C_u=C_u' \cap P_c$. We now show that for any $P \in \mathcal{B}$ we have $R(P \cap C_u) \subseteq C_u$ and $R(P \cap C_b) \subseteq C_b$.

Let $P \in \mathcal{B}$. Note that the restriction $R|_{P}$ of $R$ to $P$ is an orientation preserving isometry. Furthermore since $h$ is a continuous embedding it is order preserving, hence $R|_{P\cap h(I)}$ is order preserving as well. Thus it is possible to construct an orientation preserving homeomorphism $\tilde{h}: \C \rightarrow \C$ such that $\tilde{h}|_{P}=R|_{P}$. $\tilde{h}$ must map $C_b$ into $C_b$ and $C_u$ into $C_u$. In particular if $z \in P \cap C_u$ (resp. $z \in P \cap C_b$) then $R(z)= \tilde{h}(z) \in C_u$ (resp. $C_b$).

We now show that $R(C_u)\subseteq C_u$. Note that since $\mathcal{B}$ is a barrier, $P_c \backslash \mathcal{B}$ is the union of two disjoint connected components $B_u$, $B_b$. Since $h(I) \subset \bigcup_{B \in \mathcal{B}}B$, these regions must be contained in $C_u$ or $C_b$. Without loss of generality assume $B_u \subseteq C_u$ and $B_b \subseteq C_b$.

Assume by contradiction that there is a $z \in C_u$ such that $R(z) \notin C_u$. Since for any $P \in \mathcal{B}$ we have $R(P \cap C_u)\subseteq C_u$, we must have $z \in B_u$. Since $\mathcal{B}$ is a barrier we have that $R(z) \notin B_b$, thus we must have $R(z) \in C_b\backslash B_b$. Let $P\subseteq B_u$ be the atom of the partition $\mathcal{P}_R$ such that $z \in P$. Since $R(z) \in C_b \backslash B_b$ we have $R(P)\cap\mathcal{B} \neq \emptyset$ and since $\mathcal{B}$ is a barrier this implies that $R(P)\cap (\overline{\mathcal{B}} \cap \overline{B_u}) \neq \emptyset$.

As $\overline{B_u}\subseteq \overline{C_u}$ we have that either $R(P)\cap h(I)\neq \emptyset$ or $R(P)\cap C_u\neq \emptyset$. In the later case, as $R(z)\in C_b$, $R(P)$ is connected and $C_u$ and $C_b$ are disjoint we have that $R(P) \cap \overline{C_b}\cap \overline{C_u} \neq \emptyset$ and hence $R(P)\cap h(I)\neq \emptyset$ as well. As $h$ is bijective this is only possible if $B \in \mathcal{B}$ which contradicts $P \subseteq B_u$.

Similarly we can see that $R(C_b)\subseteq C_b$. We will omit this part for brevity of the argument.\\

We now construct sets $V_1, V_2,...$, which are forward invariant by $R$. We first define a set $V_1\subseteq U$ and show that $R(V_1)\subseteq V_1$.

Let $h'=\Phi^{-2}h$, we show that $h':I\rightarrow \Phi^{-2}h(I)$ is a continuous embedding of $f_{a,\pi}$ into $R$. Since $h(I)\subseteq \Phi^{2}U$, by Theorem \ref{renormtheorem} we have \eqref{ren50} for all $z \in \Phi^{-2}h(I)$. Hence for all $x \in I$ we have
\begin{equation*}\label{d1}
R \circ h'(x) = \Phi^{-2} R \circ h(x).
\end{equation*} 
Combining this with \eqref{eq0s2}, which holds as $h$ is an embedding, we get
\begin{equation*}\label{d2}
R \circ h'(x) = h' \circ f(x),
\end{equation*}
for all $x \in I$.

As before $h'(I)$ separates $P_c$ into two disjoint connected components, one bounded $C_b''$ and other unbounded $C_u''$. Take $V_1=C_b''\cap C_u$. Since $h'(I)\subset U$ we have $C_b'' \subseteq U$ and thus $V_1 \subseteq U$. To see that $V_1$ is forward invariant by $R$, note that if $z \in C_b''$, then  $\Phi^{2}z \in C_b$ and hence $R(\Phi^{2}z) \in C_b$. Since $C_b'' \subseteq U$, by Theorem \ref{renormtheorem} we have $R(z)\in \Phi^{-2}C_b\subseteq C_b''$.
Thus $R(C_b'')\subseteq C_b''$ and as $R(C_u)\subseteq C_u$ we get that $R(V_1)\subseteq V_1$ as intended.

Take $V_n= \Phi^{2(n-1)}V_1$, for $n \geq 2$. To see that $V_n$ is forward invariant by $R$, take $z \in V_n$, then $\Phi^{2(n-1)}z\in V_1\subseteq U$. Hence by Theorem \ref{renormtheorem} we have $$R(z)=\Phi^{2(n-1)} R(\Phi^{-2(n-1)}z), $$ and thus $R(z)\in V_n$.\\

We now prove that
\begin{equation}\label{d7}
\bigcup_{n=1}^{+\infty}V_n = C_b'' \backslash \{0\}.
\end{equation}
First we show, by induction on $n$, that for all $n \geq 1$ we have
\begin{equation}\label{d3}
V_1 \cup ... \cup V_n = C_b'' \cap \Phi^{2(n-1)}C_u.
\end{equation}
It is simple to see that \eqref{d3} holds for $n=1$. We assume \eqref{d3} holds for $n$ and show it holds for $n+1$. By \eqref{d3} we get
\begin{equation*}\label{d4}
V_1 \cup ... \cup V_{n+1} = (C_b'' \cap \Phi^{2(n-1)}C_u) \cup (\Phi^{2n}C_b'' \cap \Phi^{2n}C_u).
\end{equation*}
As $\Phi^{2n}C_b''= \Phi^{2(n-1)}C_b$ we have that
\begin{equation*}\label{d5}
C_b''=\Phi^{2n}C_b'' \cup (C_b''\cap\Phi^{2(n-1)} C_u ),
\end{equation*}
and as $\Phi^{2(n-1)} C_u \subseteq \Phi^{2n} C_u$ we have
\begin{equation*}\label{d6}
\Phi^{2n} C_u=\Phi^{2n} C_u\cup (C_b''\cap \Phi^{2(n-1)}C_u).
\end{equation*}
Combining the three expressions above we get that \eqref{d3} is  true for $n+1$, as intended.\\

Since $h(I)\subseteq \Phi^{2}U$, we have that $P_c \backslash \Phi^{2}U \subseteq C_u$, hence, by \eqref{U}, if $\Im(z)> \overline{y_1}\Phi^{2}$ then $z\in C_u$. Similarly it can be seen that if $\Im(z)>\overline{y_1}\Phi^{2n}$, then $z \in C_u\Phi^{2(n-1)}$. Therefore, as $\Phi<1$, for all $z \in P_c\backslash\{0\}$, there is an $n \in \N$ such that $z \in \Phi^{2(n-1)}C_u$. Combining this with \eqref{d3} we get \eqref{d7}.\\

We now show that there exists an $m \in \N$ such that $\Phi^{2m}U\subseteq C_b''$. Let
\begin{equation*}\label{d8}
y'=\inf_{x \in I}\left\{ \Im(h(x))   \right\}.
\end{equation*}
Note that as $h'$ is an embedding we must have $y'>0$. Hence there must be an $m \in \N$ such that $y'> \overline{y_1}\Phi^{2m}$. Thus $h(I) \subset P_c\backslash \Phi^{2m}U$. As $P_c\backslash \Phi^{2m}U$ is unbounded we must have $C_u \subseteq P_c\backslash \Phi^{2m}U$ and hence $\Phi^{2m}U\subseteq C_b''$.\\

To conclude the proof of i), take $y^*=\overline{y_1}\Phi^{2m}$. For any $z \in P_c$, such that $0<\Im(z)<y^*$, by \eqref{U}, as $\Phi^{2m}U\subseteq C_b''$ we have $z \in C_b'\backslash \{0\}$. Hence by \eqref{d7} there must be a $n \in \N$ such that $z \in V_n$.\\

We now prove ii). We show that for all $n\geq 1$ we have
\begin{equation}\label{d9}
V_n \subseteq \Phi^{2(n-1)} U \backslash \Phi^{2(m+n)} U.
\end{equation}
Note that we have
\begin{equation*}
\Phi^{2m} U \subseteq C_b'' \subseteq U,
\end{equation*}
therefore as $C_b=\Phi^2 C_b''$ we get
\begin{equation*}
\Phi^{2(m+1)} U \subseteq C_b \subseteq \Phi^{2}U,
\end{equation*}
hence $C_u \subseteq P_c \backslash \Phi^{2(m+1)} U$ and thus
\begin{equation*}
C_b''\cap C_u \subseteq (P_c \backslash \Phi^{2(m+1)} U) \cap U.
\end{equation*}
Therefore $V_1 \subseteq U \backslash \Phi^{2(m+1)} U$. As $V_n=\Phi^{2n}C_b''\cap \Phi^{2n}C_u$ we get \eqref{d9} as intended.\\

We now show  that for any $n \in \N$ there exist constants $0<b_n<B_n$ such that for all $z \in V_n$ and $k \in \N$ we have \eqref{bbounds}. Let
\begin{equation}\label{bnu}
b_n= \overline{y_1}\Phi^{2(n+m)} \sin(\beta)   ,
\end{equation}
\begin{equation}\label{bno}
B_n=\left(  \left| 1+\overline{y_1}\Phi^{2(n-1)} \cot(\beta)\csc(\beta) \right|^2+ \overline{y_1}^2\Phi^{4(n-1)}\csc^2(\beta) \right)^{\frac{1}{2}}.
\end{equation}
As $\beta<\pi/2$ it is straightforward to check that $0<b_n<B_n$.

We first show that $|F^k(z)|\geq b_n$ for all $k\in \N$. Recall the definition of $\gamma(\mu,\mu')$. For $1\leq  k \leq k(z)  $ we have
\begin{equation}\label{d10}
\Im(F^k(z))=\gamma\Im(z). 
\end{equation}
Let $j \in \{1,...,d\}$ be such that $z \in P_j$, by \eqref{ren82} and \eqref{ren118} we have
$$ \gamma=\dfrac{\sin(\arg(z))}{\sin(\arg(z)-\theta_j)},$$
as $\{\arg(z),\arg(z)-\theta_j\} \subset [\beta, \pi-\beta]$, this shows
\begin{equation}\label{gammasin}
\sin(\beta)\leq\gamma\leq\csc(\beta).
\end{equation}
Combining \eqref{d10} and \eqref{gammasin} we get
$
\min_{k \leq k(z)}\Im(F^k(z)) \geq \sin(\beta)  \Im(z). 
$ 
As $z \in V_n$, by \eqref{U} and \eqref{d9} we have
\begin{equation}\label{d12}
\overline{y_1} \Phi^{2(n+m)}< \Im(z) < \overline{y_1} \Phi^{2(n-1)}.
\end{equation}
Combining the inequalities above we get
\begin{equation*}\label{d13}
|F^k(z)|\geq \min_{k \leq k(z)}\Im(F^k(z))\geq \overline{y_1}\Phi^{2(n+m)} \sin(\beta),
\end{equation*}
hence, by \eqref{bnu} we get that $|F^k(z)|\geq b_n$ for all $k\leq k(z)$. Since $F^k(z)=R(z)\in V_n$ this holds for all $k \in \N$.

We now prove that $|F^k(z)|\leq B_n$ for all $k\in \N$.
If $\Im(F(z))\leq \lambda/(2\cot(\beta))$, then $F(z) \in \mathcal{R}_{\lambda,\beta}$ and by Lemma \ref{cor2.3}, we get that for $k\leq k(z)$
\begin{equation}\label{d14}
|\Re(F^k(z))|\leq |1+ \Im(F^k(z))\cot(\beta)|.
\end{equation} 
If $\Im(F(z))> \lambda/(2\cot(\beta))$, we get
$$|\lambda-\Im(F(z))\cot(\beta)|<|1+\Im(F(z))\cot(\beta)|,$$
and combining this with the definition of $F$ we get that \eqref{d14} holds in this case as well.

By \eqref{gammasin}, \eqref{d10}, \eqref{d12} and noting that $\csc(\beta)>1$, for $0\leq k \leq k(z)$ we have
\begin{equation*}\label{d15}
|\Im(F^k(z))|\leq \csc(\beta) \overline{y_1}\Phi^{2(n-1)}.
\end{equation*}
Combining this with \eqref{d14} we get
\begin{equation*}\label{d16}
|\Re(F^k(z))|\leq |1+  \overline{y_1}\Phi^{2(n-1)}\cot(\beta) \csc(\beta)|.
\end{equation*}
From the two inequalities above we obtain
\begin{equation*}\label{d17}
|F^k(z)|\leq \left(  \left| 1+\overline{y_1}\Phi^{2(n-1)} \cot(\beta)\csc(\beta) \right|^2+ \overline{y_1}^2\Phi^{4(n-1)}\csc^2(\beta) \right)^{\frac{1}{2}}.
\end{equation*}
hence, by \eqref{bno} we get that $|F^k(z)|\leq B_n$ for all $k\leq k(z)$. Since $F^k(z)=R(z)\in V_n$ this holds for all $k \in \N$.\\

Finally we prove iii). Let $h_{n}(x)=  \Phi^{2n} h(x)$, for all $x \in I$. We show that for all $n \in \N$,  $h_{n}$ is an embedding of $f_{a,\pi}$ into $R$.

As $h$ is an embedding it is clear that  $h_{n}:I \rightarrow \Phi^{2n} h(I)$ is a homeomorphism. Since $h(I)\subset U$ we have that $h_n(I)\subset \Phi^{2n}U$, hence by Theorem \ref{renormtheorem} we get
\begin{equation*}\label{d18}
R \circ  h_n(x) = \Phi^{2n} R \circ h(x),
\end{equation*}
for all $x \in I$. Since $h_{n}=  \Phi^{2n} h$ by \eqref{eq0s2} we also have
\begin{equation*}\label{d19}
\Phi^{2n} R \circ h(x)= h_n \circ f_{a,\pi}(x),
\end{equation*}
for all $x \in I$. Combining the identities above we get
\begin{equation*}\label{d20}
R \circ  h_n(x)= h_n \circ f_{a,\pi}(x),
\end{equation*}
for all $x \in I$, and hence  $h_{n}$ is an embedding of $f_{a,\pi}$ into $R$.
\hfill\ensuremath{\square}

\section{Proof of Theorems C and D}

In this section we prove Theorems \ref{infperisl} and \ref{notergodic}. We begin by proving Theorem \ref{thm:i3}, which states that periodic points of a TCE are contained in periodically coded islands formed by unions of invariant circles.

We introduce \textit{reflective interval exchange transformations}, relate them to TCEs and prove Theorem \ref{ren56} which shows that for a family of TCEs for every $n \in \mathbf{N}$ such that $p_n$ belongs to a certain interval $I_{P(\mu_j)}$ there is a horizontal periodic orbit for the TCE. 
The final part of the section contains the proof of Theorems \ref{infperisl} and \ref{notergodic}.\\

%
%
%

We define the itinerary of a point $z \in \mathbb{H}$, under $F$, to be $i(z)=i_0 i_1...$, with
\begin{equation*}\label{conseq5}
i_k=\begin{cases}
0, \ \textrm{if} \ F^k(z) \in P_0,\\
j, \ \textrm{if} \ F^k(z) \in P_j, \ j=1,...,d,\\
d+1, \ \textrm{if} \ F^k(z) \in P_{d+1},\\
\end{cases}
\end{equation*}
for $k \in \mathbb{N}$. Given $\delta>0$, denote by $S_{\delta}(z)$, the circle of radius $\delta$ centered at $z$. Let $m_j'(k)$ be the number of $j$s in the $k$-th first symbols of the itinerary of $p$, for $j=1,...,d$.
In the next theorem we prove that for  $\lambda$ irrational, every periodic orbit that does not fall on the boundary of the partition must have a family of invariant manifolds. These are unions of circles centered on the periodic point parametrized by their radii.

\begin{theorem}\label{thm:i3}
 Let $p \in \mathbb{H} \backslash \bigcup_{j'=0}^{k}F^{-j'}(\partial\mathcal{P})$ be a periodic point of $F$ of period $k$. Assume $\lambda \in \mathbb{R^+} \backslash \mathbb{Q}$ . There exists  $\epsilon>0$ such that for all $0<\delta<\epsilon$ the union $\bigcup_{r=0}^{k-1}S_{\delta}(F^r(p))$ is an invariant set for $F$. The orbit of any $z \in \bigcup_{r=0}^{k-1}S_{\delta}(F^r(p))$ is dense on this set if and only if  $m_1'(k) \theta_1(\alpha,\tau)+...+m_d'(k) \theta_d(\alpha,\tau) \in \pi \cdot \mathbb{R} \backslash \mathbb{Q}$.
\end{theorem}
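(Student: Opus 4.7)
The plan is to use the fact that, away from the partition boundary $\partial \mathcal{P}$, $F$ is locally an orientation-preserving isometry of $\mathbb{C}$; hence near a periodic point $p$ of period $k$ the map $F^k$ must be a rotation about $p$, and the natural invariant sets are small unions of orbit circles $\bigcup_{r=0}^{k-1} S_\delta(F^r(p))$.

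First I would extract a uniform radius. The hypothesis $p \notin \bigcup_{j'=0}^{k} F^{-j'}(\partial \mathcal{P})$ combined with $F^k(p)=p$ says that each iterate $F^r(p)$, $r=0,\ldots,k-1$, lies in the interior of a unique cone $P_{j_r}$. Let $\delta_r>0$ be its distance to $\partial \mathcal{P}$, and set $\epsilon=\min_{0\leq r \leq k-1} \delta_r$. For $0<\delta<\epsilon$, the open disk of radius $\delta$ about $F^r(p)$ is contained in the interior of $P_{j_r}$, where $F$ acts as the single rigid motion $z\mapsto e^{i\theta_{j_r}(\alpha,\tau)} z + c_{j_r}$ for the appropriate translation $c_{j_r}\in\mathbb{C}$ (with the convention $\theta_0=\theta_{d+1}=0$). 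Consequently $F$ carries $S_\delta(F^r(p))$ isometrically onto $S_\delta(F^{r+1}(p))$, with indices read modulo $k$, and the union $\bigcup_{r=0}^{k-1} S_\delta(F^r(p))$ is $F$-invariant.

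Next I would show that the restriction of $F^k$ to the disk of radius $\delta$ about $p$ is rotation about $p$ by the angle $\Theta = \sum_{j=1}^{d} m_j'(k)\theta_j(\alpha,\tau)$. Being the composition of $k$ orientation-preserving planar isometries that fixes $p$, this restriction is automatically a rotation about $p$; its angle equals the sum of the linear-part angles, and at step $r$ this contribution is $\theta_{i_r}(\alpha,\tau)$, where $i_r$ is the $r$-th itinerary symbol, with $\theta_{i_r}=0$ when $i_r\in\{0,d+1\}$. Grouping symbols by value,
\begin{equation*}
\Theta \;=\; \sum_{r=0}^{k-1}\theta_{i_r}(\alpha,\tau) \;=\; \sum_{j=1}^{d}m_j'(k)\,\theta_j(\alpha,\tau) \pmod{2\pi}.
\end{equation*}

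Finally, for the density dichotomy, any $z$ in the invariant union lies on some circle $S_\delta(F^r(p))$, and the subsequence $\{F^{nk}(z)\}_{n\geq0}$ is the orbit of a rotation by $\Theta$ on that circle. This orbit is dense on $S_\delta(F^r(p))$ if and only if $\Theta/(2\pi)\notin\mathbb{Q}$, equivalently $\Theta\in\pi(\mathbb{R}\setminus\mathbb{Q})$. Since $F$ maps consecutive circles by isometries, density on one of them propagates at once to density on the whole union. The main technical step is the identification of $F^k$ as rotation by the stated $\Theta$; this rests on the elementary facts that translations do not affect the linear part of a planar isometry and that the successive linear parts multiply to $e^{i\Theta}$. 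Every other ingredient, including the final equivalence between density on the circle and irrationality of $\Theta/\pi$, is standard.
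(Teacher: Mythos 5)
Your proof is correct and follows essentially the same route as the paper: near the periodic orbit $F^k$ is an orientation-preserving isometry fixing $p$, hence a rotation about $p$ by $\Theta=\sum_{j=1}^{d} m_j'(k)\,\theta_j(\alpha,\tau)$, so the circles $S_{\delta}(F^r(p))$ are permuted isometrically and the density dichotomy is the standard irrational-rotation criterion. The only minor difference is that the paper first uses the irrationality of $\lambda$ to exclude itineraries lying entirely in $P_0\cup P_{d+1}$ before solving $p=t'/(1-e^{i\theta'})$, a step your observation (that a composition of orientation-preserving isometries fixing $p$ is automatically a, possibly trivial, rotation about $p$) makes unnecessary.
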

\begin{proof}
 We begin by showing that the itinerary of $p$ contains at least one symbol in $\{1,...,d\}$.
 Assume by contradiction that $i(p)$ is a periodic sequence of $0$s and $d+1$s. It is clear that
 $$F^k(p)=F^{m_0'(k)+m_{d+1}'(k)}(p)= z + m_{d+1}'(k) \lambda -m_0'(k).$$
 Since $p$ is a periodic point of $F$ of period $k$ we have  $z=F^k(z)=m_{d+1}'(k) \lambda -m_0'(k) +z$. Therefore we get  that $\lambda = m_0'(k) / m_{d+1}'(k) \in \mathbb{Q}$, contradicting the assumption that $\lambda$ is irrational.
 
 Hence we can assume $i_0(p) \in \{1,...,d\}$ without loss of generality, since we can choose to start the periodic orbit at the first iterate that falls in $P_j$ for some $j=1,...,d$. Since $p \in \mathbb{H} \backslash \bigcup_{j'=0}^{k}F^{-j'}(\partial\mathcal{P})$, then $p$ belongs to some open cell $U_k$ in the $k$-th refinement of the partition. Since all points in this cell will share the first $k$ addresses in the itinerary, we have $i_0(p)...i_k(p) = i_0(z)...i_k(z)$ for $z \in U_k$. Therefore $F^k:U_k \rightarrow \mathbb{C}$ is such that
 $$F^k(z) = e^{i\theta'(\alpha,\beta)} z + t'(\alpha,\beta,\lambda,\eta),$$
 for some functions $\theta' : [0,\pi)^2 \rightarrow [0,\pi)$ and $t':[0,\pi)^2 \times \mathbb{R}_+^2 \rightarrow \mathbb{R}$. Since $F^k(p)=p$ we have $$p=\dfrac{t'(\alpha,\beta,\lambda,\eta)}{1-e^{i\theta'(\alpha,\beta)}}.$$
 From this it is easy to check that we can rewrite
 $$F^k(z)=e^{i \theta'}(z-p)+pe^{i\theta'}+t=e^{i \theta'}(z-p)+p,$$
 and we get
 \begin{equation}\label{conseq6}
 |F^k(z)-p|=|e^{i \theta'}(z-p)+p-p|=|z-p|.
 \end{equation}
 This implies that $F^k$ is invariant in the largest circle with center $p$ contained in $U_k$. 
 
 Take $\epsilon >0$ such that $B_{\epsilon}(p) \subseteq U_k$. We now see that for $l=1,...,k-1$ we have $F^l(B_{\epsilon}(p))=B_{\epsilon}(F^l(p))$. 
 
 From \eqref{conseq6} we have $|F^k(z)-p|=|z-p| <\epsilon$ which implies that $F^k(z) \in B_{\epsilon}(p)$. Therefore $F^k(B_{\epsilon}(p)) \subseteq B_{\epsilon}(p)$. This implies that for all $r \in \mathbb{N}$, we have $ F^{r k}(z)\in B_{\epsilon}(p)$, hence we also have for $l=1,...,k-1$ that $i(F^l(z))=i(F^{r k+l}(z))$. Therefore every $z \in B_{\epsilon}(p)$ has the same itinerary of $p$. It follows that $B_{\epsilon}(F^l(p))$ is also an invariant set for $F^{l}$, since we can repeat the above argument for $l=1,...,k-1$ and conclude $F^l(B_{\epsilon}(p))=B_{\epsilon}(F^l(p))$.

 For any $0<\delta < \epsilon$ we know that $z \in S_{\delta}(p)$ if and only if $z =p+\delta e^{i \nu'}$ for some $\nu' \in [0,2\pi)$. Since $F^k(z)=\delta e^{i(\theta'+\nu')}+p$, we have $F^k(S_{\delta}(p))\subseteq S_{\delta}(p)$. Therefore $F^k(S_{\delta}(p))=S_{\delta}(p)$, since the reverse inclusion is clear.
 We can repeat this argument for $l=1,...,k-1$ and conclude that $F^l(S_{\delta}(p))=S_{\delta}(F^l(p))$ is an invariant set for $F^l$. Therefore $\bigcup_{r=0}^{k-1}S_{\delta}(F^r(p))$ is an invariant set for $F$.

 Finally we prove that the orbit of any $z \in \bigcup_{r=0}^{k-1}S_{\delta}(F^r(p))$ is dense on this set if and only if  $m_1'(k) \theta_1(\alpha,\tau)+...+m_d'(k) \theta_d(\alpha,\tau) \in \pi \cdot \mathbb{R} \backslash \mathbb{Q}$. Note that $$\theta'(\alpha,\beta)=m_1'(k) \theta_1(\alpha,\tau)+...+m_d'(k) \theta_d(\alpha,\tau).$$ We also have that $F^k$ acts as a rotation by an angle $\theta'$ in $S_{\delta}(p)$, so the orbit of $F^k$ is dense if and only if $m_1'(k) \theta_1(\alpha,\tau)+...+m_d'(k) \theta_d(\alpha,\tau) \in \pi \cdot \mathbb{R}\backslash\mathbb{Q}$. The statement for $F$ follows by $F^l(S_{\delta}(p))=S_{\delta}(F^l(p))$.
\end{proof}

Recall the definition of interval exchange transformation (IET) in the Introduction.
Given $\alpha \in \R_+^d$, $\tau \in S(d)$, we say an IET $f_{\alpha,\tau}$ is \textit{reflective} if there is a point $x\in I$ such that $f_{\alpha,\tau}(x)=|\alpha|-x$. Where $|\alpha|$ denotes the $\ell_1$ norm of $\alpha$.

Recall, from the Introduction, that $\mathcal{R}(\tau)$ denotes the parameter region of all $\alpha \in \R_+^d$ such that for some $j \in \{1,...,d\}$ we have \eqref{reflective}. The following lemma gives an alternative characterization of this set.

\begin{lemma}\label{reflemma}
 Let $\alpha \in \R_+^d$ and $\tau \in S(d)$. Then $f_{\alpha,\tau}$ is reflective if and only if $\alpha \in \mathcal{R}(\tau)$.
\end{lemma}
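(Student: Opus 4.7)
The plan is a direct computation: solve $f_{\alpha,\tau}(x) = |\alpha|-x$ explicitly on each partition element and then determine when the candidate solution actually lies in that element. Since $f_{\alpha,\tau}$ acts as the translation $x \mapsto x + w_j(\alpha,\tau)$ on $I_j = [x_{j-1}, x_j)$, the reflection equation has the unique candidate $x_{\ast}(j) = (|\alpha|-w_j)/2$ on $I_j$. Hence $f_{\alpha,\tau}$ is reflective if and only if there exists some $j \in \{1,\ldots,d\}$ with $x_{\ast}(j) \in I_j$, and the problem reduces to translating this membership condition into the inequality defining $\mathcal{R}(\tau)$.

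The next step is to rewrite $|\alpha| - w_j$ using the identity $|\alpha| = \sum_{\tau(k)<\tau(j)}\alpha_k + \alpha_j + \sum_{\tau(k)>\tau(j)}\alpha_k$. Abbreviating $A_j := \sum_{\tau(k)>\tau(j)}\alpha_k$ and $B_j := \sum_{k<j}\alpha_k = x_{j-1}$, a short calculation from the definition of $w_j$ gives
\begin{equation*}
|\alpha|-w_j = \alpha_j + A_j + B_j, \qquad x_{\ast}(j) = \tfrac{1}{2}(\alpha_j + A_j + B_j).
\end{equation*}
The condition $x_{j-1} \leq x_{\ast}(j) < x_j$, i.e.\ $B_j \leq x_{\ast}(j) < B_j + \alpha_j$, is then equivalent, after clearing the factor of two, to the two-sided bound
\begin{equation*}
-\alpha_j \;\leq\; A_j - B_j \;<\; \alpha_j.
\end{equation*}

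It remains to compare this bound with the defining inequality of $\mathcal{R}(\tau)$, namely $|A_j - B_j| < \alpha_j$. The two conditions agree away from the single boundary case $A_j - B_j = -\alpha_j$, which geometrically places the reflection point $x_{\ast}(j)$ exactly at the left endpoint $x_{j-1}$ of $I_j$. This boundary configuration is the step I expect to be the main technical obstacle: one must argue that whenever the equation $A_j - B_j = -\alpha_j$ holds for some $j$, the strict inequality $|A_{j'} - B_{j'}| < \alpha_{j'}$ must hold for some other index $j'$, so that the two sets genuinely coincide. I would handle this by examining the jump of the auxiliary function $g(x) := f_{\alpha,\tau}(x) - (|\alpha|-x)$ at the discontinuity $x_{j-1}$ and tracking the sign of $g$ through neighbouring atoms, using that $g$ has slope $2$ on each $I_{j'}$ and is fixed to vanish at the reflection point. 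Once this boundary analysis is in place, both implications follow immediately from the displayed computation above.
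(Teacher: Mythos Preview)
Your core computation coincides with the paper's argument, which introduces $\tilde f(x)=|\alpha|-f_{\alpha,\tau}(x)$ and observes that $f_{\alpha,\tau}$ is reflective iff some $\tilde f|_{I_j}$ (an orientation-reversing affine bijection) has a fixed point in $I_j$, equivalently $\tilde f(I_j)\cap I_j\neq\emptyset$; unwinding this gives exactly your candidate $x_\ast(j)=(|\alpha|-w_j)/2$ and the membership test $x_\ast(j)\in I_j$. The paper then simply asserts that $\tilde f(I_j)\cap I_j\neq\emptyset$ is equivalent to the strict inequality \eqref{reflective} without further comment, so you have correctly isolated a boundary case that the paper glosses over.

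However, your proposed resolution of that boundary case has a genuine gap. It is not true that whenever $A_j-B_j=-\alpha_j$ for some $j$, another index $j'$ must satisfy the strict inequality. Take $d=2$, $\tau=\mathrm{id}$, and $\alpha_1=\alpha_2>0$. Then $f_{\alpha,\tau}$ is the identity, and $x=|\alpha|/2=\alpha_1\in I$ solves $f_{\alpha,\tau}(x)=|\alpha|-x$, so the IET is reflective; yet $A_1-B_1=\alpha_2=\alpha_1$ and $A_2-B_2=-\alpha_1=-\alpha_2$, so neither index yields $|A_j-B_j|<\alpha_j$ and $\alpha\notin\mathcal R(\tau)$. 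Thus the lemma, read literally, fails on this codimension-one locus, and no sign-tracking argument of the type you sketch can repair it. For the paper's purposes this is harmless, since the applications (Theorems~C and~D) concern open sets of parameters; but the clean statement would either relax \eqref{reflective} to a non-strict inequality on the appropriate side or exclude this exceptional set explicitly.
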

\begin{proof}
 Consider the map $\tilde{f}:I\rightarrow I$ such that $\tilde{f}(x)= |\alpha|-f_{\alpha,\tau}(x)$, for $x \in I$. By definition of this property, $f_{\alpha,\tau}$ is reflective if and only if $\tilde{f}$ has a fixed point.
 Note that for all $j \in \{1,...,d\}$ the restriction of $\tilde{f}$ to $I_j$ is an orientation reversing continuous bijection, hence $\tilde{f}$ has a fixed point if and only if there is a $j \in \{1,...,d\}$ such that $\tilde{f}(I_j)\cap I_j \neq \emptyset$. It is simple to see that this condition is satisfied if and only if \eqref{reflective} holds. Thus $f_{\alpha,\tau}$ is reflective if and only if $\alpha \in \mathcal{R}(\tau)$ as desired.
\end{proof}

Recall, from the Introduction, that given $\tau \in S(d)$, $J_{\mathcal{R}}(\tau)$ is the set of all $j\in \{1,...,d\}$ such that \eqref{reflective} holds, for some $\alpha \in \R_+^d$.

Given $\alpha \in \mathbb{A}\cap \mathcal{R}(\tau)$ and $j \in J_{\mathcal{R}}(\tau)$ set
\begin{equation}\label{muS}
\mu_{j}(\alpha,\tau)=\tan\left( \dfrac{\pi+\theta_{j}(\alpha,\tau)}{2}  \right).
\end{equation}
We omit, for simplicity, the arguments of $\mu_{j}(\alpha,\tau)$ when this does not cause ambiguity.
\begin{lemma}\label{r1}
Let $\tau \in S(d)$, $\alpha \in \mathbb{A}\cap \mathcal{R}(\tau)$, $j \in J_{\mathcal{R}}(\tau)$ and $\mu_{j}(\alpha,\tau)$ as in \eqref{muS}. We have $L_S'(-\mu_j)\subseteq P_{j}$ and for all $z \in L_S'(-\mu_j)$ we have $\Im(F(z))=\Im(z)$.
\end{lemma}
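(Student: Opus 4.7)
The plan is to reduce both claims to a short angle computation, the key observation being that \eqref{muS} has been engineered so that $L'_S(-\mu_j)$ is precisely the ray in $\mathbb{H}$ along which the rotation $E|_{P_j}$ acts as a reflection in the horizontal direction.

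First I would identify the ray explicitly: any $z \in L'_S(-\mu_j) \cap \mathbb{H}$ satisfies $\Im(z)/\Re(z) = -\mu_j$, and from \eqref{muS} together with the identity $-\tan((\pi+\theta_j)/2) = \tan((\pi-\theta_j)/2)$ this forces $\arg(z) = (\pi - \theta_j)/2$. The angle $(\pi-\theta_j)/2$ lies in $(0,\pi)$ since $|\theta_j| \leq |\alpha| - \alpha_j < \pi$ by \eqref{ren01a} and $\alpha \in \mathbb{A}$, so it is the unique argument in the upper half plane.

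Next, to prove $L'_S(-\mu_j) \subseteq P_j$, I would verify that $(\pi - \theta_j)/2 \in W_j$. Substituting $\beta = (\pi - |\alpha|)/2$ from \eqref{beta}, the definition \eqref{ren01a} of $\theta_j$, and the identity $|\alpha| - \sum_{\tau(k)<\tau(j)}\alpha_k = \alpha_j + \sum_{\tau(k)>\tau(j)}\alpha_k$, the two distances from $(\pi-\theta_j)/2$ to the endpoints of $W_j$ simplify to
\[
\textstyle \frac{1}{2}\bigl(\alpha_j + \sum_{\tau(k)>\tau(j)}\alpha_k - \sum_{k<j}\alpha_k\bigr) \quad\text{and}\quad \frac{1}{2}\bigl(\alpha_j - \sum_{\tau(k)>\tau(j)}\alpha_k + \sum_{k<j}\alpha_k\bigr).
\]
Both are strictly positive if and only if $\bigl|\sum_{\tau(k)>\tau(j)}\alpha_k - \sum_{k<j}\alpha_k\bigr| < \alpha_j$, which is exactly the reflective condition \eqref{reflective} and is guaranteed by the hypotheses $\alpha \in \mathcal{R}(\tau)$ and $j \in J_{\mathcal{R}}(\tau)$; the strict inequalities place the angle in the interior of $W_j$, so the half-open conventions in the definition of $W_j$ are harmless.

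Finally, for $\Im(F(z)) = \Im(z)$: since $z \in P_j$, the definitions of $E$ and $G$ give $F(z) = ze^{i\theta_j} - \eta$, and $\eta \in \mathbb{R}$ contributes nothing to the imaginary part. Writing $z = r e^{i(\pi-\theta_j)/2}$ from the first step, $ze^{i\theta_j} = re^{i(\pi+\theta_j)/2}$, and $\sin((\pi+\theta_j)/2) = \cos(\theta_j/2) = \sin((\pi-\theta_j)/2)$ yields $\Im(ze^{i\theta_j}) = \Im(z)$ immediately. The only step requiring genuine care is the containment in $P_j$, where one must match the two sign conditions coming from the endpoints of $W_j$ against the two halves of the absolute-value inequality \eqref{reflective}; once this bookkeeping is done the rest is a one-line trigonometric identity.
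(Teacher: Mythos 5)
Your proof is correct and follows essentially the same route as the paper's: the paper obtains the angle $(\pi-\theta_j)/2$ from the reflective fixed point of $f_{\alpha,\tau}$ (via Lemma \ref{reflemma}) and only afterwards identifies its tangent with $-\mu_j$, whereas you start from \eqref{muS} and verify directly that this angle lies in the interior of $W_j$ --- the same computation run in the opposite order, with the preservation of the imaginary part reduced to the same reflection identity. The only caveat, shared with the paper's own proof, is that you read the hypotheses as asserting \eqref{reflective} for the specific pair $(\alpha,j)$, which is the intended (if not quite literal) meaning of $\alpha\in\mathcal{R}(\tau)$ and $j\in J_{\mathcal{R}}(\tau)$.
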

\begin{proof}
We begin by showing that there is a $j \in \{1,...,d\}$ and a $\varphi \in W_{j}$ such that
 \begin{equation}\label{conseq12}
 f_{\alpha,\tau}(\varphi-\beta)= \pi-\beta-\varphi,
 \end{equation}
with $\beta$ as in \eqref{beta}. Since $\alpha \in \mathcal{R}(\tau)$ we have that $f_{\alpha,\tau}$ is a reflective IET, hence there is a $j \in \{1,...,d\}$ and a $\varphi' \in I_j$ such that $f_{\alpha,\tau}(\varphi')=|\alpha|-\varphi'$. Since $|\alpha|=\pi-2\beta$, by taking $\varphi=\varphi'+\beta$ we get \eqref{conseq12}.
We show that for $z \in L_S'(\tan(\varphi))$ we have $\Im(F(z))=\Im(z)$.
By the definition of the map $E$ and by \eqref{ren01a}, for $z \in P_c$ we have
\begin{equation}\label{conseq13}
E(z)=|z|\exp \left[i \left( \beta +  f_{\alpha,\tau}(\arg(z)-\beta)  \right)\right].
\end{equation}
In particular for $z \in L_S'(\tan(\varphi))$, by the definition of $F$, \eqref{conseq12} and \eqref{conseq13} we have
\begin{equation*}\label{conseq14}
F(z)=|z|e^{i(\pi-\varphi)}-\eta.
\end{equation*}
From \eqref{conseq13} it follows that $\Im(z)=|z|\sin(\varphi)=\Im(F(z))$.
We now prove that $\tan(\varphi)=-\mu_j$. By comparing the two identities above we get
\begin{equation*}\label{conseq15}
\varphi=\dfrac{\pi-\theta_j(\alpha,\tau)}{2}.
\end{equation*}
Therefore, by \eqref{conseq13} the slope of $L_S''$ is equal to $\tan(\pi-\varphi)$ which coincides with $\mu_j$. Thus $\tan(\varphi)=-\mu_j$, which completes the proof.
\end{proof}

Given $\nu>0$ and $\mu$ such $|\mu|>\nu$, let
\begin{equation*}\label{PS}
P(\mu)=\{z \in P_c: -\dfrac{\Im(z)}{|\mu|}< \Re(z)  <\dfrac{\Im(z)}{|\mu|} \}.
\end{equation*}
Define the interval $I_{P(\mu)}$ as
\begin{equation*}
I_{P(\mu)}=\left\{\begin{array}{ll}
\vspace{0.2cm}
\left(1/D(\mu,\nu),1/C(\mu,\nu)\right), & \mu>\nu,\\
\left(1/C(\mu,\nu),1/D(\mu,\nu)\right), & \mu<-\nu.
\end{array}\right.
\end{equation*}

The following theorem shows that a simple condition for the existence of a horizontal periodic island, as defined in the Introduction, for a TCE. A visual depiction of this can be seen in Figure \ref{fig:islandsattractor}.

\begin{figure}[t]
		\centering
		\includegraphics[width=1\linewidth]{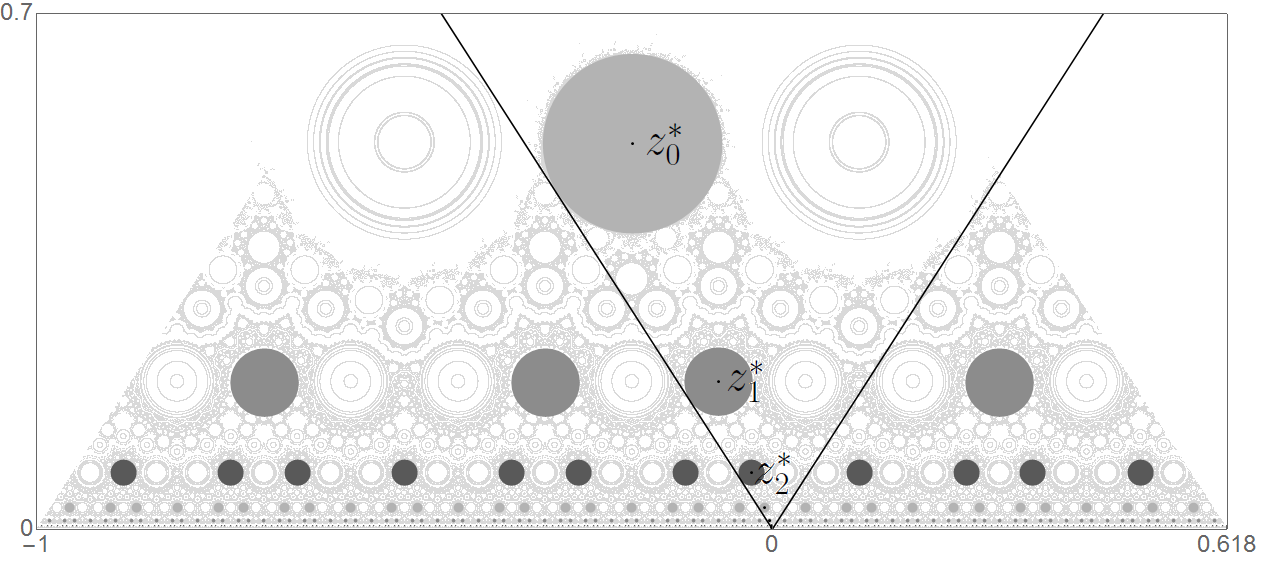}
		\captionsetup{width=1\textwidth}
		\caption{Periodic structures of the TCE with parameters $d=2$, $\alpha=(0.7,\pi-2.7)$, $\tau=(12)$, $\lambda=\Phi$ and $\eta=\Phi^2$. The lines represented are $L_1'$ and $L_2'$ and the differently coloured disks are periodic islands, formed by invariant circles, containing periodic points $z_0^*,z_1^*,...$ . In light grey the first $10^4$ iterates of the orbits of 320 points can be seen.}
		\label{fig:islandsattractor}
\end{figure}

\begin{theorem}\label{ren56}
 Let $\tau \in S(d)$, $\alpha \in \mathbb{A}\cap \mathcal{R}(\tau)$, $j \in J_{\mathcal{R}}(\tau)$ and $\mu_{j}(\alpha,\tau)$ as in \eqref{muS}. For every $n \in  \mathbf{N}$ such that $p_n(\mu_j) \in I_{P(\mu_j)}$, $F$ has a horizontal periodic orbit at height $\hat{y_n}$, for a certain $y_{n+1}(\mu_j)< \hat{y_n}<y_{n}(\mu_j)$. If $L_S'(\mu_j ')\cap \partial \mathcal{P}= \emptyset$, then $F$ has an horizontal periodic island.
\end{theorem}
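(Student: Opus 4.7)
The central observation is Lemma \ref{r1}: on the line $L'_S(-\mu_j)\subseteq P_j$ the map $F$ preserves imaginary parts, and since $F$ acts as a horizontal translation on $P_0\cup P_{d+1}$, every intermediate iterate retains the same height until the orbit returns to $P_c$. Consequently $R(z)$ has the same imaginary part as $z$ for every $z\in L'_S(-\mu_j)$, and a horizontal periodic orbit of $F$ reduces to finding a fixed point of $R$ lying on this line; the plan is to realize such a fixed point through the map $\rho$.

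Take $\mu'=-\mu_j$. The formula relating $\mu$ and $\mu'$, combined with the identity $\mu_j=-\cot(\theta_j/2)$, gives $\mu=\mu_j$ and $\gamma(\mu,\mu')=1$, so $\xi_S(y)=-y/\mu_j+iy$ and $\Im(\rho(y))=y$. Theorem \ref{t5.2} then yields, on each interval $[y_{n+1}(\mu_j),y_n(\mu_j))$, an affine description of $\rho$ whose real part has slope $1/\mu_j$. Writing $\Re(\rho(y))=y/\mu_j+c_n$ and solving $\Re(\rho(y))=\Re(\xi_S(y))=-y/\mu_j$ yields the unique candidate
\[
\hat{y}_n=\frac{y_n-\mu_j x_n}{2},
\]
where $x_n=(p_n-\tfrac{1}{2})\ell(y_n)=(2p_n-1)y_n/\nu$ by \eqref{ren120}. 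If $\hat{y}_n\in(y_{n+1},y_n)$, then $\xi_S(\hat{y}_n)$ is a fixed point of $R$, hence a periodic point of $F$ whose full $F$-orbit lies on the horizontal line $\Im(z)=\hat{y}_n$.

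The next step verifies that $\hat{y}_n\in(y_{n+1},y_n)$ is equivalent to $p_n(\mu_j)\in I_{P(\mu_j)}$. Substituting $x_n=(2p_n-1)y_n/\nu$, the condition $\hat{y}_n<y_n$ rearranges to $\mu_j(2p_n-1)>-\nu$, while $\hat{y}_n>y_{n+1}$ becomes $\mu_j(2p_n-1)/\nu<1-2y_{n+1}/y_n$. The recursion \eqref{fdren5} expands $1-2y_{n+1}/y_n$ as $2Cp_n-1$ when $p_n<1/C$ and as $2D(1-p_n)-1$ when $p_n>1/C$. After clearing denominators and keeping careful track of the signs of $\mu_j$, $\mu_j+\nu$ and $\mu_j-\nu$ in the two regimes $\mu_j>\nu$ and $\mu_j<-\nu$, the second inequality reduces to whichever of $p_n<1/C$ or $p_n>1/C$ matches the relevant regime (Case 1 for $\mu_j>\nu$, Case 2 for $\mu_j<-\nu$) and is therefore automatically consistent with it, while the first inequality reduces to $p_n>1/D$ when $\mu_j>\nu$ and to $p_n<1/D$ when $\mu_j<-\nu$. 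Together these pin $p_n$ down to the open interval with endpoints $1/C$ and $1/D$, which is precisely $I_{P(\mu_j)}$.

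For the island assertion, the hypothesis $L'_S(\mu_j')\cap\partial\mathcal{P}=\emptyset$ places the fixed point $\xi_S(\hat{y}_n)$ and all its $F$-iterates off the partition boundary, so the periodic orbit belongs to $\mathbb{H}\setminus\bigcup_{j'=0}^{k}F^{-j'}(\partial\mathcal{P})$ where $k$ is its period. Theorem \ref{thm:i3} then provides some $\varepsilon>0$ such that $\bigcup_{r=0}^{k-1}S_\delta(F^r(\xi_S(\hat{y}_n)))$ is $F$-invariant for every $0<\delta<\varepsilon$; taking the union over $\delta\in(0,\varepsilon)$ produces an $F$-invariant open neighborhood of the horizontal orbit, i.e., a horizontal periodic island. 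The principal technical difficulty is the sign bookkeeping in the preceding paragraph, since $C$, $D$, $\mu_j$ and $\mu_j\pm\nu$ all flip sign between the two regimes and $I_{P(\mu_j)}$ swaps its endpoints accordingly.
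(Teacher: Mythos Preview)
Your proposal is correct and follows the same strategy as the paper: use Lemma~\ref{r1} and the piecewise-affine structure of $\rho$ from Theorem~\ref{t5.2} to locate a fixed point of $R$ on the line $L'_S(-\mu_j)$, then invoke Theorem~\ref{thm:i3} for the island. The only difference is cosmetic: the paper finds $\hat{y}_n$ by an intermediate-value argument (comparing $\Re(\rho)$ with $\Re(\xi_S)$ at the endpoints $y_n^-$ and $y_{n+1}$, using that $p_n\in I_{P(\mu_j)}$ is equivalent to $x_n+iy_n\in P(\mu_j)$ and that $\rho(y_{n+1})\in L'_1\cup L'_d$), whereas you solve explicitly for $\hat{y}_n=(y_n-\mu_j x_n)/2$ and verify the range inclusion by direct algebra with \eqref{fdren5}.
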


\begin{proof}
 Since $\tau \in S(d)$ and $\alpha \in \mathbb{A}\cap \mathcal{R}(\tau)$, by Lemma \ref{r1} we have for all $z \in L_S'(-\mu_j)$ that $\Im(F(z))=\Im(z)$.
 Recall \eqref{ren120}. We begin by proving that if for some $n \in \mathbf{N}$ we have $p_n(\mu_j) \in I_{P(\mu_j)}$, then $x_n(\mu_j) + i y_n(\mu_j)\in P(\mu_j)$. By the definition of $\ell$ and from \eqref{ren120} we have
 \begin{equation*}\label{conseq8}
 p_n(\mu_j)=\frac{1}{\ell(y_n(\mu_j))}\left(\frac{y_n(\mu_j)}{\nu}+x_n(\mu_j)\right).
 \end{equation*} 
 From this, we have 
 $
 x_n(\mu_j)+i y_n(\mu_j)\in P(\mu_j),
 $
  if and only if we have
 \begin{equation*}\label{conseq9}
 \dfrac{1}{2}\left(1-\dfrac{\nu}{|\mu|}\right) < p_n(\mu_j) < \dfrac{1}{2}\left(1+\dfrac{\nu}{|\mu|}\right).
 \end{equation*}
 By \eqref{CandD}
 it is direct to see that these inequalities are satisfied if and only if $p_n(\mu_j) \in I_{P(\mu_j)}$.\\
 
 We now prove that if $p_n(\mu_j) \in I_{P(\mu_j)}$, there is an $\hat{y_n}$ satisfying
 \begin{equation}\label{conseq10}
 y_{n+1}(\mu_j)< \hat{y_n}<y_{n}(\mu_j),
 \end{equation}
 such that $\xi_S(\hat{y_n})$ is a horizontal periodic orbit of $F$ at height $\hat{y_n}$.
 
 We split the proof in two cases $\mu_j>\nu$ and $\mu_j<-\nu$, but omit the $\mu_j<-\nu$ case as it is analogous to the other case.
 
 Assume $\mu_j>\nu$. As for $y>0$, $\xi_S(y) \in  L_S'(-\mu_j)$ we have $\Re(\xi_S(y))=-y/\mu_j$, moreover as $p_n(\mu_j) \in I_{P(\mu_j)}$ we have $x_n(\mu_j)+i y_n(\mu_j)\in P(\mu_j)$ and hence $x_n>-y_n/\mu_j$. Since $x_n(\mu_j)=\Re(\rho(y_n^-))$ this shows that
 \begin{equation*}\label{hp1}
 \Re(\xi_S(y_n^-)) < \Re(\rho(y_n^-)).
 \end{equation*}
 As $\mu_j>\nu$ and $p_n(\mu_j) \in I_{P(\mu_j)}$ we have $p_n(\mu_j)<1/C$, hence by Theorem \ref{t5.2} we get that $\rho(y_{n+1})\in L_d'$. As $\xi_S(y_{n+1}) \in \textrm{int}(P_c)$ we get 
 \begin{equation*}\label{hp2}
  \Re(\rho(y_{n+1}))<\Re(\xi_S(y_{n+1})).
 \end{equation*}
 By Theorem \ref{t5.2}, $\rho(y)$ is an affine map for $y_{n+1}\leq y <y_n$ and the map $y \rightarrow \xi_S(y)$ is also affine, in particular both maps are continuous for $y_{n+1}\leq y <y_n$.
 Therefore by the two inequalities above, there must be a $\hat{y_n}$ satisfying \eqref{conseq10} such that
 \begin{equation*}\label{hp3}
 \Re(\rho(\hat{y_n}))=\Re(\xi_S(\hat{y_n})).
 \end{equation*}
 As $\xi_S(\hat{y_n})\in  L_S'(-\mu_j)$, by Lemma \ref{r1} we have that
  \begin{equation*}\label{hp4}
 \Im(\xi_S(\hat{y_n}))=\Im(F(\xi_S(\hat{y_n})))=\hat{y_n}.
 \end{equation*}
 By Theorem \ref{t5.2}, $\Im(\rho(\hat{y_n}))=\hat{y_n}$, hence by the two identities above we get that $\rho(\hat{y_n})=\xi_S(\hat{y_n})$. Thus by the definition of $\rho$, $\xi_S(\hat{y_n})$ is a periodic orbit for $F$.
 Moreover by Lemma \ref{cor2.3} we have that the imaginary part of $\xi_S(\hat{y_n})$ remains constant, and equal to $\hat{y_n}$, throughout its orbit, hence it is an horizontal periodic orbit for $F$.\\
 
 Finally we show that if $L_S'(-\mu_j)\cap \partial \mathcal{P}= \emptyset$, then $F$ has a periodic island that contains this periodic orbit.
 Since $\xi_S(\hat{y_n}) \in L_S'(-\mu_j)$ and $L_S'(-\mu_j)\cap \partial \mathcal{P}= \emptyset$ we can apply Theorem \ref{thm:i3} which shows that this orbit shadows a periodic island which is formed by the union of infinitely many invariant circles.
\end{proof}
 

We now prove Theorems \ref{infperisl} and \ref{notergodic}.

\subsection{Proof of Theorem \ref{infperisl}}
 We divide the proof in two cases $\tau \in \zeta_{-}(d)$ (resp. $\zeta_{+}(d)$) and prove that there is a non-empty open set $\mathcal{A}_{-} \subseteq \mathbb{A}\cap \mathcal{R}(\tau)$ (resp. $\mathcal{A}_{+}$) such that for all $\alpha \in \mathcal{A}_{-}$ (resp. $\mathcal{A}_{+}$), $F$ has infinitely many horizontal periodic islands accumulating on the origin. Having proved this, taking $\mathcal{A}= \mathcal{A}_{-} \cup \mathcal{A}_{+}$ gives the desired result.
 
 We begin by considering the case $\tau \in \zeta_{-}(d)$. Given $j \in J_{\mathcal{R}}(\tau)$, consider the set
 \begin{equation*}\label{infperisleq1}
 J_{\zeta_{-}}(j,\tau)=\left\{  j'' \in \{1,...,d\}: j < j'' \ \textrm{and} \  \tau(j'')<\tau(j')  \right\}.
 \end{equation*}
 Since $\tau \in \zeta_{-}(d)$, we can take $j'\in J_{\mathcal{R}}(\tau)$ such that $J_{\zeta_{-}}(j',\tau) \neq \emptyset$ and take $j'' \in J_{\zeta_{-}}(j',\tau)$.
 
 Let $\mu_{j'}(\alpha,\tau)$ be as in \eqref{muS}. Consider the set
 $\mathcal{V}_{-}$ of all $\alpha \in \mathbb{A}\cap \mathcal{R}(\tau)$, such that:
 \begin{equation}\label{infperisleq2}
 |\alpha| \notin\left\{\dfrac{2\pi}{n}\right\}_{n\geq 1}, \quad \dfrac{\mu_{j'}(\alpha,\tau)}{\nu(|\alpha|)}<-1 \quad \textrm{and} \quad \dfrac{\mu_{j'}(\alpha,\tau)+\nu(|\alpha|)}{\mu_{j'}(\alpha,\tau)-\nu(|\alpha|)}<\Phi.
 \end{equation}
 We now show that if $|\alpha| \notin\left\{2\pi/n\right\}_{n\geq 1}$, there is a $\delta>0$ such that for $\theta_{j'}(\alpha/|\alpha|,\tau) \in (1-\delta,1)$, we have \eqref{infperisleq2}.
 
 Since the map $r \rightarrow (r+1)/(r-1)$ is continuous for all $r \in \R \backslash\{-1\}$ and zero for $r=-1$, there is an $\epsilon>0$, such that for all $\alpha \in \mathcal{V}_{-}$ such that if:
 \begin{equation}\label{infperisleq3}
  \dfrac{\mu_{j'}(\alpha,\tau)}{\nu(|\alpha|)} \in (-1-\epsilon,-1),
 \end{equation}
 then we have \eqref{infperisleq2}. By \eqref{muS} we have
 \begin{equation*}\label{infperisleq4}
 \dfrac{\mu_{j'}(\alpha,\tau)}{\nu(|\alpha|)} = \tan\left( \dfrac{\pi+\theta_{j'}(\alpha,\tau)}{2}    \right)/\tan\left( \dfrac{\pi-|\alpha|}{2}    \right).
 \end{equation*}
 Using linearity of $\alpha \rightarrow \theta_{j'}(\alpha,\tau)$ and simple trigonometric identities, from the above identity, we get
 \begin{equation*}\label{infperisleq5}
 \dfrac{\mu_{j'}(\alpha,\tau)}{\nu(|\alpha|)} = -\cot\left(|\alpha|\frac{\theta_{j'}(\alpha/|\alpha|,\tau)}{2}\right)\tan\left(\frac{|\alpha|}{2}\right).
 \end{equation*}
 Since $\alpha \rightarrow \theta_{j'}(\alpha/|\alpha|,\tau)$ is independent of $|\alpha|$ and we have $|\alpha| \notin\left\{2\pi/n\right\}_{n\geq 1}$, the map $\theta \rightarrow -\cot(|\alpha|\theta/2)\tan(|\alpha|/2)$ is continuous and therefore there is a $\delta>0$ such that for $\theta_{j'}(\alpha/|\alpha|,\tau) \in (1-\delta,1)$, we have \eqref{infperisleq3} and thus \eqref{infperisleq2}.
 
 We now show that there is a nonempty open set $\mathcal{A}_{-}'\subseteq \mathcal{V}_{-}$. To do this we construct an open set $\mathcal{A}_{-}'$ such that for $\alpha \in \mathcal{A}_{-}'$ we have $\theta_{j'}(\alpha/|\alpha|,\tau) \in (1-\delta,1)$.
 By \eqref{ren01a} and \eqref{reflective}, it suffices to show there is an $\tilde{\alpha} \in \mathcal{V}_{-}$ such that we have
 \begin{equation}\label{infperisleq6}
 \sum_{\tau(k)<\tau(j')}\tilde{\alpha}_k -  \sum_{k<j'}\tilde{\alpha}_k  >|\tilde{\alpha}|(1-\delta),
 \end{equation}
 \begin{equation}\label{infperisleq7}
 \left|\sum_{\tau(k)>\tau(j')}\tilde{\alpha}_k -  \sum_{k<j'}\tilde{\alpha}_k \right| < \tilde{\alpha}_{j'}.
 \end{equation}
 Since the above inequalities are strict, we have that there is a neighbourhood $\mathcal{A}_{-}'\subseteq \mathcal{V}_{-}$ of $\tilde{\alpha}$, such that both inequalities are true for all $\alpha \in \mathcal{A}_{-}'$.
 
 We now prove there is $\tilde{\alpha} \in \mathcal{V}_{-}$ satisfying \eqref{infperisleq6} and \eqref{infperisleq7}.
 Assume first that $d=2$ and take $\tilde{\alpha}$ such that $\tilde{\alpha}_{j'}=|\tilde{\alpha}|\delta /2$ and $\tilde{\alpha}_{j''}=|\tilde{\alpha}|(1-\delta /2)$. Since $j'' \in J_{\zeta_{-}}(j',\tau)$, we have $j < j''$ and  $\tau(j'')<\tau(j')$, we have $j'=1$ and $j''=2$, hence
 \begin{equation*}\label{infperisleq8}
 \sum_{\tau(k)<\tau(j')}\tilde{\alpha}_k -  \sum_{k<j'}\tilde{\alpha}_k  = |\tilde{\alpha}|(1-\delta /2),
 \end{equation*}
 thus \eqref{infperisleq6} holds. We also have
 \begin{equation*}\label{infperisleq9}
 \sum_{\tau(k)>\tau(j')}\tilde{\alpha}_k -  \sum_{k<j'}\tilde{\alpha}_k  =0,
 \end{equation*}
 hence, since $\tilde{\alpha}_{j'}>0$, we get \eqref{infperisleq7} as well. 
 
 Now assume $d>2$ and set $\tilde{\alpha}=(\tilde{\alpha}_j)_{j=1,...,d}$, where
 \begin{equation}\label{infperisleq10}
 \tilde{\alpha}_j=\left\{
 \begin{array}{ll}
 |\tilde{\alpha}|\delta /6, & j=j',\vspace{0.1cm}\\
 |\tilde{\alpha}|(1-\delta /4), & j=j'',\vspace{0.1cm}\\
 \dfrac{|\tilde{\alpha}| \delta}{12(d-2)}, & j\neq j',j''.\\
 \end{array}\right.
 \end{equation}
 We show that \eqref{infperisleq6} is true for $\tilde{\alpha}$. Since $j''\in J_{\zeta_{-}}(j',\tau)$ we have
 \begin{equation*}\label{infperisleq12}
 \sum_{k<j'}\tilde{\alpha}_k + \sum_{\tau(k)\geq\tau(j')}\tilde{\alpha}_k  \leq 2 |\tilde{\alpha}| - 2 \tilde{\alpha}_{j''}.
 \end{equation*}
 By \eqref{infperisleq10} we have $2 |\tilde{\alpha}| - 2 \tilde{\alpha}_{j''}=|\tilde{\alpha}|\delta /2$, hence by the inequality above we have 
 \begin{equation*}\label{infperisleq11}
 \sum_{k<j'}\tilde{\alpha}_k + \sum_{\tau(k)\geq\tau(j')}\tilde{\alpha}_k  <|\tilde{\alpha}|\delta,
 \end{equation*}
 which is equivalent to \eqref{infperisleq6}.
 
 We now show that \eqref{infperisleq7} is true for $\tilde{\alpha}$. Since for $k \in \{j',j''\}$ we have $\tau(k)\leq \tau(j')$ and $k> j'$ we have
 \begin{equation*}\label{infperisleq13}
 \left|\sum_{\tau(k)>\tau(j')}\tilde{\alpha}_k -  \sum_{k<j'}\tilde{\alpha}_k \right| < \sum_{k\neq j',j''}\tilde{\alpha}_k.
 \end{equation*}
 By \eqref{infperisleq10} we have $\tilde{\alpha}_{j'}=|\tilde{\alpha}|\delta /6$ and $\sum_{k\neq j',j''}\tilde{\alpha}_k=\delta/12$ hence by the inequality above we have that  \eqref{infperisleq7} is true for $\tilde{\alpha}$.

 We now prove that for $\alpha \in \mathcal{A}_{-}'$, $F$ has infinitely many horizontal periodic orbits accumulating on the origin.
 By Theorem \ref{ren56} it suffices to show that for infinitely many $n \in \N$ we have $p_n(\mu_{j'}(\alpha,\tau)) \in I_{P(\mu_{j'}(\alpha,\tau))}$.
 Note that we have
 \begin{equation*}\label{infperisleq14}
 \dfrac{D(\mu_{j'}(\alpha,\tau),\nu(|\alpha|))}{C(\mu_{j'}(\alpha,\tau),\nu(|\alpha|))}=\dfrac{\mu_{j'}(\alpha,\tau)+\nu(|\alpha|)}{\mu_{j'}(\alpha,\tau)-\nu(|\alpha|)},
 \end{equation*}
 hence since $\alpha \in \mathcal{A}_{-}'\subseteq \mathcal{V}_{-}$ we have
 \begin{equation}\label{infperisleq15}
 \dfrac{D(\mu_{j'}(\alpha,\tau),\nu(|\alpha|))}{C(\mu_{j'}(\alpha,\tau),\nu(|\alpha|))}<\Phi<1.
 \end{equation}
 Assume first that $-\bar{\mu}<\mu_{j'}<-\nu$, with $\bar{\mu}=\frac{\nu}{\Phi^3}$. Using H{\"o}lder conjugacy of $C$ and $D$ it can be seen that \eqref{infperisleq15} is equivalent to
 \begin{equation*}\label{infperisleq16}
 \dfrac{1}{C(\mu_{j'},\nu)}<1-\dfrac{\Phi}{D(\mu_{j'},\nu)}<\dfrac{1}{D(\mu_{j'},\nu)}.
 \end{equation*}
 By Theorem \ref{rencor} \eqref{pep15}, for all $n\geq 1$ we have that $p_n(\mu_{j'})=1-\Phi/D(\mu_{j'},\nu)$, hence by the inequality above we get $p_n(\mu_{j'}) \in I_{P(\mu_{j'})}$ for infinitely many $n \in \N$.
 Now assume $\mu_{j'}\leq -\bar{\mu}$. It can be seen that \eqref{infperisleq15} is equivalent to:
 \begin{equation*}\label{infperisleq17}
 \dfrac{1}{C(\mu_{j'},\nu)}<\dfrac{1}{C(\mu_{j'},\nu)\Phi}<\dfrac{1}{D(\mu_{j'},\nu)}.
 \end{equation*}
 By Theorem \ref{rencor} \eqref{pep3}, for all even $n \in \N$ we have that $p_n(\mu_{j'})=(C(\mu_{j'},\nu)\Phi)^{-1}$, hence by the inequality above we get $p_n(\mu_{j'}) \in I_{P(\mu_{j'})}$ for infinitely many $n \in \N$.
 
 We now show that there is a non-empty open set $\mathcal{A}_{-} \subseteq \mathbb{A}\cap \mathcal{R}(\tau)$ such that for all $\alpha \in \mathcal{A}_{-}$, $F$ has infinitely many horizontal periodic islands accumulating on the origin.
 By Theorem \ref{ren56} it suffices to show that there is a non-empty open set $\mathcal{A}_{-} \subseteq \mathcal{A}_{-}'$ such that for all $\alpha \in \mathcal{A}_{-}$ we have $L_S(\mu_{j'}'(\alpha,\tau)) \cap \partial \mathcal{P}=\emptyset$.
 
 Consider the sets
 \begin{equation*}\label{infperisleq18}
 \mathcal{H}_k=\left\{  \alpha \in \mathbb{A}: |\alpha|-\theta_{j'}(\alpha,\tau)-2 \sum_{j \leq k}\alpha_j =0    \right\},
 \end{equation*}
 for $k=0,1,...,d$. Note that we have $L_S(\mu_{j'}'(\alpha,\tau)) \cap \partial \mathcal{P}\neq \emptyset$ if for some $k\in \{0,1,...,d\}$ we have
 \begin{equation*}\label{infperisleq19}
 -\mu_{j'}(\alpha,\tau)= \tan\left( \frac{\pi-|\alpha|}{2}+  \sum_{j \leq k}\alpha_j   \right).
 \end{equation*}
 By \eqref{muS} and the two identities above it follows that we have $L_S(\mu_{j'}'(\alpha,\tau)) \cap \partial \mathcal{P}\neq \emptyset$ if and only if $\alpha \in \mathcal{H}_k$ for some $k\in \{0,1,...,d\}$.
 
 Set $\mathcal{A}_{-} = \mathcal{A}_{-}' \backslash \bigcup_{k=0}^{d}\mathcal{H}_k$. Since $\mathcal{H}_k$ are codimension 1 closed subsets of $\mathbb{A}$, we have that $\mathcal{A}_{-}$ is a non-empty open set and since for $\alpha \in \mathcal{A}_{-}$ we have $L_S(\mu_{j'}'(\alpha,\tau)) \cap \partial \mathcal{P}=\emptyset$, $F$ has infinitely many horizontal periodic islands accumulating on the origin.
 
 We now consider the case $\tau \in \zeta_{+}(d)$. This case is mostly analogous to the previous one, so for brevity we will only streamline the proof.
 
 Given $j \in J_{\mathcal{R}}(\tau)$, consider the set
 \begin{equation*}\label{infperisleq20}
 J_{\zeta_{+}}(j,\tau)=\left\{  j'' \in \{1,...,d\}: j > j'' \ \textrm{and} \  \tau(j'')>\tau(j')  \right\}.
 \end{equation*}
 Take $j'\in J_{\mathcal{R}}(\tau)$ such that $J_{\zeta_{+}}(j',\tau) \neq \emptyset$ and take $j'' \in J_{\zeta_{+}}(j',\tau)$.
 
  Consider the set $\mathcal{V}_{+}$, of all $\alpha \in \mathbb{A}\cap \mathcal{R}(\tau)$, such that:
 \begin{equation*}\label{infperisleq21}
 |\alpha| \notin\left\{\dfrac{2\pi}{n}\right\}_{n\geq 1}, \quad \dfrac{\mu_{j'}(\alpha,\tau)}{\nu(|\alpha|)}>1 \quad \textrm{and} \quad \dfrac{\mu_{j'}(\alpha,\tau)-\nu(|\alpha|)}{\mu_{j'}(\alpha,\tau)+\nu(|\alpha|)}<\Phi.
 \end{equation*}
 By a similar argument to the previous case, if $|\alpha| \notin\left\{2\pi/n\right\}_{n\geq 1}$, there is a $\delta>0$ such that for $\theta_{j'}(\alpha/|\alpha|,\tau) \in (-1,-1+\delta)$, the expression above is satisfied.
 
 To find a nonempty open set $\mathcal{A}_{+}'\subseteq \mathcal{V}_{-}$ by \eqref{ren01a} and \eqref{reflective}, it suffices to show there is an $\tilde{\alpha} \in \mathcal{V}_{+}$ such that we have \eqref{infperisleq7} and:
 \begin{equation*}\label{infperisleq22}
 \sum_{\tau(k)<\tau(j')}\tilde{\alpha}_k -  \sum_{k<j'}\tilde{\alpha}_k  <|\tilde{\alpha}|(-1+\delta).
 \end{equation*}
 Indeed it can be seen that both this inequality and \eqref{infperisleq7} hold for the same choice of $\tilde{\alpha}$ of the previous case. 
 
 We prove that for $\alpha \in \mathcal{A}_{+}'$, $F$ has infinitely many horizontal periodic orbits accumulating on the origin.
 By Theorem \ref{ren56} it suffices to show that for infinitely many $n \in \N$ we have $p_n(\mu_{j'}(\alpha,\tau)) \in I_{P(\mu_{j'}(\alpha,\tau))}$.
 Note that we have
 \begin{equation*}\label{infperisleq23}
 \dfrac{C(\mu_{j'}(\alpha,\tau),\nu(|\alpha|))}{D(\mu_{j'}(\alpha,\tau),\nu(|\alpha|))}=\dfrac{\mu_{j'}(\alpha,\tau)-\nu(|\alpha|)}{\mu_{j'}(\alpha,\tau)+\nu(|\alpha|)},
 \end{equation*}
 hence since $\alpha \in \mathcal{A}_{+}'\subseteq \mathcal{V}_{+}$ we have
 \begin{equation}\label{infperisleq24}
 \dfrac{C(\mu_{j'}(\alpha,\tau),\nu(|\alpha|))}{D(\mu_{j'}(\alpha,\tau),\nu(|\alpha|))}<\Phi<1.
 \end{equation}
 Assume first that $\nu<\mu_{j'}<\bar{\mu}$. It can be seen that \eqref{infperisleq24} is equivalent to
 \begin{equation*}\label{infperisleq25}
 \dfrac{1}{D(\mu_{j'},\nu)}<1-\dfrac{1}{D(\mu_{j'},\nu)\Phi}<\dfrac{1}{C(\mu_{j'},\nu)}.
 \end{equation*}
 By Theorem \ref{rencor} \eqref{pep4}, for all odd $n$ we have that $p_n(\mu_{j'})=1-(D(\mu_{j'},\nu)\Phi)^{-1}$, hence by the inequality above we get $p_n(\mu_{j'}) \in I_{P(\mu_{j'})}$ for infinitely many $n \in \N$.
 Now assume $\mu_{j'}\geq \bar{\mu}$. It can be seen that \eqref{infperisleq24} is equivalent to:
 \begin{equation*}\label{infperisleq26}
 \dfrac{1}{D(\mu_{j'},\nu)}<\dfrac{\Phi}{C(\mu_{j'},\nu)}<\dfrac{1}{C(\mu_{j'},\nu)}.
 \end{equation*}
 By Theorem \ref{rencor} \eqref{pep21}, for all $n \in \N$ we have that $p_n(\mu_{j'})=(C(\mu_{j'},\nu)\Phi)^{-1}$, hence by the inequality above we get $p_n(\mu_{j'}) \in I_{P(\mu_{j'})}$ for all $n \in \N$.
 
 Setting $\mathcal{A}_{+} = \mathcal{A}_{+}' \backslash \bigcup_{k=0}^{d}\mathcal{H}_k$, we get that for $\alpha \in \mathcal{A}_{+}$ we have $L_S(\mu_{j'}'(\alpha,\tau)) \cap \partial \mathcal{P}=\emptyset$, hence by Theorem \ref{ren56} $F$ has infinitely many horizontal periodic islands at heights which converge to $0$, hence accumulating on the real line. \hfill\ensuremath{\square}

\subsection{Proof of Theorem \ref{notergodic}}
 Let $U$ be an invariant set for $R$  that contains a neighbourhood of the origin. By Theorem \ref{infperisl} it contains infinitely many periodic islands. Suppose there is a point $z \in U$ with a dense orbit in $U$. Then $\{R^n(z)\}_n$ can get arbitrarily close to a periodic point $z'$, this implies that for some $m \in \mathbb{N}$, $R^m(z)$ is contained in a periodic island. Hence its orbit is contained in a circle thus contradicting the hypothesis that the orbit of $z$ is dense in $U$.\hfill\ensuremath{\square}

\end{document}